\documentclass[reqno,a4paper]{amsart}
\usepackage[english]{babel}
\usepackage[margin=40mm,top=50mm,bottom=50mm]{geometry}

\usepackage[dvipsnames]{xcolor}
\usepackage{graphicx}
\graphicspath{{images/}}
\usepackage{float}
\usepackage{amssymb}
\usepackage{amsmath}
\usepackage{amsthm}
\usepackage{amsfonts}
\usepackage{mathtools}
\usepackage{centernot}
\usepackage{bbold}
\usepackage[mathcal]{euscript}
\usepackage{mathrsfs}
\usepackage{stmaryrd}
\usepackage{extarrows}
\usepackage{latexsym}
\usepackage{tikz-cd}
\tikzcdset{arrow style=tikz, diagrams={>={Classical TikZ Rightarrow[scale=1]}}}
\usetikzlibrary{babel}
\usepackage[shortlabels,inline]{enumitem}

\usepackage{multicol}
\usepackage{url}
\usepackage{csquotes}
\usepackage{lipsum}
\usepackage[color=red!50,textsize=small]{todonotes}
\setcounter{tocdepth}{1}

\newcommand\isoarrow{\stackrel{\sim}{\smash{\longrightarrow}\rule{0pt}{0.4ex}}}

\newcommand\Qpv{{Q^+\!\!\:(v)}}
\newcommand\Qmv{{Q^-\!\!\:(v)}}
\newcommand\Qpvmesh{{\Qpv,\, \mathrm{mesh}}}
\newcommand\Qmvmesh{{\Qmv,\, \mathrm{mesh}}}

\newcommand\ZQ{{\mathbb{Z}Q}}
\newcommand\ZQmesh{{\ZQ,\, \mathrm{mesh}}}
\newcommand\ZZQ{{\mathbb{Z}\times\ZQ}}

\newcommand\PB{\scriptstyle{\mathsf{PB}}}
\newcommand\hoPB{\scriptstyle{\mathsf{ho\, PB}}}
\newcommand\PBho{\scriptstyle{\mathsf{(ho)PB}}}
\newcommand\PO{\scriptstyle{\mathsf{PO}}}
\newcommand\hoPO{\scriptstyle{\mathsf{ho\, PO}}}

\newcommand\Src{{\mathsf{Src}^n}}
\newcommand{\Srcn}[1]{\mathsf{Src}^{#1}}
\newcommand\Snk{{\mathsf{Snk}^n}}

\newcommand{\Mod}{\ensuremath{\mathsf{Mod}}}
\newcommand{\BMod}{\ensuremath{\mathsf{BMod}}}
\newcommand{\sSet}{\ensuremath{\mathsf{sSet}}}
\newcommand{\SSET}{\ensuremath{\mathsf{SSET}}}

\newcommand{\D}[1]{\ensuremath{\mathcal{D}}({#1})}
\newcommand{\Dd}[2]{\ensuremath{\mathcal{D}}^{#1}({#2})}

\newcommand{\Spc}{\ensuremath{\mathsf{Spc}}}
\newcommand{\fSpc}{\ensuremath{\mathsf{fSpc}}}
\newcommand{\Sp}{\ensuremath{\mathsf{Sp}}}
\newcommand{\fSp}{\ensuremath{\mathsf{fSp}}}
\newcommand{\infCAT}{\ensuremath{\mathsf{CAT}_\infty}}
\newcommand{\infCat}{\ensuremath{\mathsf{Cat}_\infty}}
\newcommand{\Prcat}{\ensuremath{\mathsf{Pr}^\mathsf{L}}}

\newcommand{\Hom}[1]{\ensuremath{\mathrm{Hom}_{#1}}}
\newcommand{\Fun}{\ensuremath{\mathsf{Fun}}}
\newcommand{\iHom}{\ensuremath{\underline{\mathrm{Hom}}}}

\newcommand{\map}[1]{\mathsf{map}_{#1}}

\newcommand\cof{\mathsf{cof}}
\newcommand\fib{\mathsf{fib}}

\usepackage[pagebackref=true]{hyperref}
\hypersetup{
    colorlinks=true,
    linkcolor=RoyalBlue,
    citecolor=RoyalBlue,
    urlcolor=RoyalBlue,
    pdftitle={Abstract representation AR diagrams},
}
\usepackage[nameinlink]{cleveref}

\newtheoremstyle{myremarkstyle}{.5\baselineskip\@plus.2\baselineskip\@minus.2\baselineskip}{.5\baselineskip\@plus.2\baselineskip\@minus.2\baselineskip}
{\normalfont}{}{\itshape}{.}{5pt plus 1pt minus 1pt}{}                         
\newtheorem{theo}{Theorem}[section]
\newtheorem{prop}[theo]{Proposition}
\newtheorem{coro}[theo]{Corollary}
\newtheorem{lemm}[theo]{Lemma}
\newtheorem{fact}[theo]{Fact}
\theoremstyle{definition}
\newtheorem{defi}[theo]{Definition}
\newtheorem{nota}[theo]{Notation}

\newtheorem{cons}[theo]{Construction}

\newtheorem{conj}[theo]{Conjecture}
\theoremstyle{myremarkstyle}
\newtheorem{rema}[theo]{Remark}
\newtheorem{example}[theo]{Example}
\newtheorem{examples}[theo]{Examples}
\numberwithin{equation}{section}
\crefname{theo}{Theorem}{Theorems}
\crefname{prop}{Proposition}{Propositions}
\crefname{coro}{Corollary}{Corollaries}
\crefname{lemm}{Lemma}{Lemmas}
\crefname{fact}{Fact}{Facts}
\crefname{rema}{Remark}{Remarks}
\crefname{example}{Example}{Examples}
\crefname{examples}{Examples}{Examples}

\title[Abstract rep. theory via coherent Auslander-Reiten diagrams]{Abstract representation theory via coherent Auslander-Reiten diagrams}
\author{Álvaro Sánchez}
\address{Departamento de Matemáticas, Universidad de Murcia, 30100 Espinardo, Murcia, Spain}
\email{a.sanchezcampillo@um.es}
\date{\today}
\subjclass[2020]{Primary: 55U35, 16G20. Secondary: 18G80, 55P43.}
\keywords{Abstract representation theory, stable $\infty$-category, coherent Auslander-Reiten diagram, reflection functors, stable equivalence, spectral Picard group, spectral path algebra}
\thanks{The author is supported by the grant 21791/FPI/22 funded by Fundación Séneca (Región de Murcia, Spain), as well as the research project grants PID2024-155576NB-I00 funded by MICIU/AEI/10.13039/501100011033 and 22004/PI/22 funded by Fundación Séneca.}

\begin{document}

\begin{abstract}
    We provide a general method to study representations of quivers over abstract stable homotopy theories (e.g. arbitrary rings, schemes, dg algebras, or ring spectra) in terms of Auslander-Reiten diagrams. 

For a finite acyclic quiver $Q$ and a stable $\infty$-category $\mathcal{C}$, we prove an abstract equivalence of the representations $\mathcal{C}^Q$ with a certain \emph{mesh $\infty$-category} $\mathcal{C}^{\mathbb{Z}Q,\, \mathrm{mesh}}$ of representations of the repetitive quiver $\mathbb{Z}Q$, that we build inductively using abstract reflection functors. This allows to produce, from the symmetries of the Auslander-Reiten quiver, \emph{universal autoequivalences} of representations $\mathcal{C}^Q$ in any stable $\infty$-category $\mathcal{C}$, which are the elements of the \emph{spectral Picard group} of $Q$. In particular, we get abstract versions of key functors in classical representation theory ---e.g. reflection functors, the Auslander-Reiten translation, the Serre functor, etc. Moreover, for representations of trees this enables us to realize the whole derived Picard group over a field as a factor of the spectral Picard group.
\end{abstract}

\maketitle

\tableofcontents

\section{Introduction} 

One of the best understood examples of triangulated categories is the derived category of representations $\mathsf{D}^b(kQ)$, for $Q$ a quiver  and $k$ a field. This is largely due to Happel's work \cite{Hap87,Hap88} and his use of methods coming from representation theory ---mainly \emph{Auslander-Reiten theory} \cite{AusReiSma97}--- to find quite explicit combinatorial descriptions of such categories. Encoded in the Auslander-Reiten quiver $\Gamma(\mathsf{D}^b(kQ))$ of the derived category there is a plethora of information. For instance, it reveals that derived equivalent quivers must have the same unoriented underlying graph. Moreover, many relevant functors can be read from this quiver ---reflection functors, Serre functors, suspension, etc.--- and as shown by Miyachi and Yekutieli \cite{MiyYek01}, in some cases one can even recover all symmetries of $\mathsf{D}^b(kQ)$ (namely the derived Picard group) from those of $\Gamma(\mathsf{D}^b(kQ))$. 

However, beyond the setting of representations over a field, little is known when considering more general coefficients, such as the integers or arbitrary commutative rings. Our approach to the subject is rather radical but well-founded: it has recently been observed that certain well-known symmetries of categories of representations are actually mere consequences of the stability ---in the sense of homotopy--- of the categories involved, and so they exist in a much broader generality, often for the corresponding representations in any stable homotopy theory. This paper seeks to contribute to the development of an \emph{abstract representation theory} of quivers in the sense of Groth and Stovicek \cite{GroSto16,GroSto16b,GroSto18,GroSto18b} ---see also our \cref{sec:abstract-rep}. In particular, we aim to introduce techniques from Auslander-Reiten theory into the study of categories of representations over abstract stable homotopy theories, ranging from derived categories of arbitrary rings, schemes or dg algebras to the homotopy category of spectra. 

There are several approximations to formalizing the abstract notion of a stable homotopy theory. The most classical one of a triangulated category fails badly for our purposes: representations ---i.e., functors--- in a triangulated category form no longer a triangulated category, and moreover, the lack of essential functoriality properties in such categories obstructs some of the most fundamental constructions in representation theory. We choose to work within the framework of \emph{stable $\infty$-categories} \cite{Lur17}, where a well-behaved notion of homotopy coherent representations is at our disposal. For a quiver $Q$ and a stable $\infty$-category $\mathcal{C}$, one can simply consider the $\infty$-category of functors $\mathcal{C}^Q$ which is again stable. When specialized to the derived $\infty$-category $\D{R}$ ---the $\infty$-categorical enhancement of the ordinary derived category of a ring $R$---, one precisely recovers derived representations: 
\begin{equation} \label{eq:derived-cat}
\D{R}^Q \simeq \D{RQ}
\end{equation}
We should mention that similar statements can be formulated using alternatives to higher category theory. For instance, the foundational work of Groth and Stovicek is written in the language of (stable) derivators. However, the use of certain gluing operations between categories of representations, which play a crucial role in this paper, naturally led us to work in the more flexible setting of $\infty$-categories. In a similar spirit, there are already the works \cite{DycJasWal19,DycJasWal21}.

We study abstract stable $\infty$-categories of representations via \emph{coherent diagrams} of the shape of the \emph{Auslander-Reiten quiver}.
More precisely, for the repetitive quiver $\ZQ$ (i.e., the component of the Auslander-Reiten quiver containing the indecomposable projectives), we show that there is a natural equivalence of $\infty$-categories
\begin{equation} \label{eq:intro-mesh}
\mathcal{C}^Q \xrightarrow{ \ \simeq \ } \mathcal{C}^{\ZQmesh}
\end{equation}
with a certain \emph{mesh $\infty$-category} of representations of $\ZQ$ (\Cref{theo:ZQ-mesh}). Along the way, we give a new construction of abstract reflection functors ---alternative to the one given in \cite{DycJasWal21}--- via an adequate gluing of a source (\Cref{theo:Qpv-mesh}) or a sink (\Cref{theo:Qmv-mesh}) to the stable $\infty$-category of representations. When iterated, these yield the equivalence \eqref{eq:intro-mesh}. Our result generalizes and refines one of the main results in \cite{GroSto16}, which only considers Dynkin quivers of type A.

This new perspective offers immediate advantages. While the quiver $Q$ itself may possess only trivial automorphisms, $\ZQ$ has a ton of non-trivial ones. This allows us to transfer these symmetries encoded in the Auslander-Reiten quiver to the $\infty$-category of representations over $Q$. 
We thus obtain abstract versions of key functors in representation theory: reflection functors, Auslander-Reiten translations, Serre functors, etc. ---indeed, we show that when specialized to $\mathcal{C} = \D{k}$ of a field $k$, one recovers the classical version of these. Moreover, we prove using the above equivalence \eqref{eq:intro-mesh} that the group of automorphisms of $\ZQ$ naturally acts on any stable $\infty$-category of representations $\mathcal{C}^Q$ (\Cref{theo:actionZQ}):
\begin{equation}
\mathrm{Aut}(\ZQ) \ \rotatebox[origin=c]{-90}{$\circlearrowright$} \ \mathcal{C}^Q.
\end{equation}
In practice, we work with a possibly bigger subquiver\footnote{If $Q$ is Dynkin, then $\Gamma_Q = \Gamma(\mathsf{D}^b(kQ)) \cong \mathbb{Z}Q$, otherwise, $\Gamma_Q \cong \mathbb{Z}\times\mathbb{Z}Q$.} $\Gamma_Q \subseteq \Gamma(\mathsf{D}^b(kQ))$ of the Auslander-Reiten quiver, which allows to read the suspension functor, and a group homomorphism 
\begin{equation} \label{eq:intro-action}
\mathrm{Aut}(\Gamma_Q) \xrightarrow{ \ \ } \mathrm{Aut}(\mathcal{C}^Q).    
\end{equation}

Perhaps the most notable application of the methods introduced is that they contribute to the computation of \emph{spectral Picard groups} of quivers. Given finite acyclic quivers $Q$ and $Q'$, it is already known when they have equivalent representations over any stable $\infty$-category. For instance, the following are equivalent (\Cref{theo:equiv-quivers}):
\begin{enumerate}[(a)]
    \item $\mathcal{C}^Q \simeq \mathcal{C}^{Q'}$ for every $\mathcal{C}$ stable $\infty$-category.
    \item $\mathsf{D}^b(kQ) \simeq \mathsf{D}^b(kQ')$ as triangulated categories over a field $k$.
    \item $\Sp^Q \simeq \Sp^{Q'}$, where $\Sp$ denotes the $\infty$-category of spectra.
    \item $Q$ and $Q'$ are related by a sequence of source or sink reflections.
    \item $Q$ and $Q'$ have isomorphic repetitive quivers $\mathbb{Z}Q \cong \mathbb{Z}Q'$.
\end{enumerate}
This follows already from Happel's work and the existence of abstract reflection functors (\Cref{coro:reflections} or \cite{DycJasWal21}). Therefore, the next natural step is to try to classify these equivalences, and this leads to the investigation of the corresponding groups of autoequivalences. There is one among them which is in some sense \emph{universal}: this is the spectral Picard group $\mathsf{Pic}_\Sp(Q)$, which encodes all autoequivalences of $\Sp^Q$. 
For instance, we show that every element of the spectral Picard group induces natural autoequivalences $\mathcal{C}^Q \simeq \mathcal{C}^Q$ for all $\mathcal{C}$ stable $\infty$-categories (\Cref{coro:sp-equiv-quivers}).

In this paper, we formulate Picard groups using a new definition of \emph{spectral path algebras} (\Cref{def:path-algebra}): for a commutative ring spectrum $R$, the $R$-algebra spectrum $RQ$ is such that there is a canonical equivalence $\Mod_R^Q \simeq \Mod_{RQ}$, analogous to \eqref{eq:derived-cat}. The Picard group $\mathsf{Pic}_R(Q)$ is that of invertible $RQ$-bimodules over $R$. We explain that autoequivalences induced from symmetries of $\Gamma_Q$ are represented by such bimodules, and so the action \eqref{eq:intro-action} induces a group homomorphism
\begin{equation} \label{eq:homo-pic}
\mathrm{Aut}(\Gamma_Q) \xrightarrow{ \ \ } \mathsf{Pic}_R(Q).
\end{equation}
When $R$ is at least connective, we show that \eqref{eq:homo-pic} is a split monomorphism, i.e. $\mathsf{Pic}_R(Q)$ contains $\mathrm{Aut}(\Gamma_Q)$ as a semidirect factor (\Cref{coro:split-mono}). Moreover, when $Q$ is a tree, we obtain a very conceptual explanation of the theory developed: in this case, \eqref{eq:homo-pic} gives $\mathrm{Aut}(\Gamma_Q) \cong \mathsf{Pic}_{\D{k}}(Q)$ for any field $k$ \cite{MiyYek01}, and it follows that the canonical homomorphism 
\begin{equation}
Hk \otimes -: \mathsf{Pic}_\Sp(Q) \xrightarrow{ \ \ } \mathsf{Pic}_{\D{k}}(Q)  
\end{equation}
is a split epimorphism. That is, every symmetry that we find in the representation theory of $Q$ over a field is a shadow of its counterpart in the abstract/spectral setting. In particular, this gives an important generalization of one of the main results in \cite{GroSto16} from Dynkin quivers of type A to all finite trees.

The content of the sections in this paper is as follows. In \cref{sec:preliminaries} we give the preliminaries on $\infty$-categories that we need to develop our constructions, with particular emphasis on stable $\infty$-categories. In \cref{sec:abstract-rep} we give context for the rest of the paper: we introduce the $\infty$-categories of representations we are interested in, their natural notion of equivalence and an overview of what is known about them. Since they will be used to build the main equivalence in the next section, we give in \cref{sec:reflections} a new construction of abstract BGP reflection functors by means of a gluing argument. In \cref{sec:equiv-repet} we explain homotopy coherent Auslander-Reiten diagrams in the derived category of a quiver over a field and give an abstract version of them over arbitrary stable $\infty$-categories, introducing the mesh $\infty$-category of $\ZQ$ and proving its equivalence \eqref{eq:intro-mesh} with representations of $Q$. In \cref{sec:actions} we use the previous result to produce autoequivalences of representations of $Q$ from automorphisms of the irregular Auslander-Reiten quiver, which assemble into the action group homomorphism \eqref{eq:intro-action}. Finally, in \cref{sec:picard} we mainly focus on the representations over an $\mathbb{E}_\infty$-ring $R$, showing that the equivalences produced land in the Picard group over $R$ and using the induced action to reveal new structural properties of these Picard groups.

\subsection*{Acknowledgments} I would like to thank my supervisors Manuel Saorín and Jan Stovicek for their guidance and constant help, and for offering me the incredible opportunity to work in this project.

\numberwithin{equation}{theo}

\section{Preliminaries on (stable) \texorpdfstring{$\infty$-}{infinity }categories} \label{sec:preliminaries} 

\subsection{Basics on $\infty$-categories} \label{subsec:infty-cats}
This article freely uses the theory of \emph{$\infty$-categories} (a.k.a. \emph{quasi-categories}), as developed by Joyal \cite{Joy08,Joy08a} and Lurie \cite{Lur09,Lur17,Lur25} among others. Here we only enumerate the abstract basics of the general theory while fixing some notation. We refer the reader to the just cited references for the foundations; other valuable sources are \cite{Cis19} and \cite{Lan21}. 

$\infty$-categories are a framework for homotopy coherent mathematics, where sets are replaced by \emph{$\infty$-groupoids} or \emph{spaces}:
\begin{enumerate}
    \item An $\infty$-category $\mathcal{C}$ consists of objects $x, y, z, ...$ and for each two of them a mapping space $\map{\mathcal{C}}(x,y) \in \Spc$ of morphisms composable up to homotopy. An $\infty$-groupoid is an $\infty$-category in which every morphism is invertible.
    \item To every $\infty$-category one can associate a category $h\mathcal{C}$ called its \emph{homotopy category} that forgets the higher homotopical information. It has objects those of $\mathcal{C}$ and morphisms $\Hom{h\mathcal{C}}(x,y) = \pi_0(\map{\mathcal{C}}(x,y))$.
    \item Conversely, through the \emph{nerve} construction, every category $\mathsf{C}$ can be seen as an $\infty$-category $N(\mathsf{C})$ such that $h(N(\mathsf{C})) \cong \mathsf{C}$.
    \item Given any two $\infty$-categories $\mathcal{C}$ and $\mathcal{D}$, functors between them form again an $\infty$-category $\Fun(\mathcal{C},\mathcal{D})$. 
    \item Most familiar notions from category theory (adjunctions, (co)limits, Kan extensions, ...) extend to a homotopy coherent version in $\infty$-category theory. E.g. a final object $w \in \mathcal{C}$ is one such that $\map{\mathcal{C}}(x,w)$ is contractible for all $x$.
    \item There is an $\infty$-category of spaces ($\infty$-groupoids) $\Spc$ which takes in $\infty$-category theory the role of the category of sets in classical category theory. Similarly, there is an $\infty$-category of (small) $\infty$-categories $\infCat$ (c.f. \ref{subsec:joyal}).
\end{enumerate}
Getting to the concrete model that we use, $\infty$-categories live inside the category of simplicial sets $\sSet$: they are the simplicial sets $\mathcal{C}$ in which every horn $\Lambda_k^n \to \mathcal{C}$ can be extended to a simplex $\Delta^n \to \mathcal{C}$ for $0 < k < n$. It is worth noting that (by a non-trivial result of Joyal) $\infty$-groupoids in this model are precisely Kan complexes, i.e. those simplicial sets satisfying the same extension property for $0 \leq k \leq n$.

\subsection{Homotopical algebra} A major source of $\infty$-categories of use comes from inverting a class of morphisms in a category (or even $\infty$-category).

We recall that the \emph{localization} of an $\infty$-category $\mathcal{C}$ by a class of morphisms $\mathcal{W}$ is an $\infty$-category $\mathcal{W}^{-1}\mathcal{C}$ and a functor $\gamma: \mathcal{C} \to \mathcal{W}^{-1}\mathcal{C}$ such that:
\begin{enumerate}
    \item $\gamma(w)$ is invertible in $\mathcal{W}^{-1}\mathcal{C}$ for any $w \in \mathcal{W}$;
    \item for any $\infty$-category $\mathcal{D}$, the functor $\gamma^*: \Fun(\mathcal{W}^{-1}\mathcal{C},\mathcal{D}) \to \Fun_\mathcal{W}(\mathcal{C},\mathcal{D})$ is an equivalence, where $\Fun_\mathcal{W}(\mathcal{C},\mathcal{D})$ denotes functors sending morphisms in $\mathcal{W}$ to invertible ones.
\end{enumerate}
We remark that:
\begin{enumerate}[(a)]
    \item The $\infty$-categorical localization of (the nerve of) a 1-category is in general not a 1-category, and this is crucial.
    \item The homotopy category of the localization $h(\mathcal{W}^{-1}\mathcal{C})$ is canonically equivalent to the 1-categorical localization $(h\mathcal{W})^{-1}h\mathcal{C}$.
\end{enumerate}

The localization is particularly interesting when the original category possesses some homotopical structure. Let $\mathcal{M}$ be a model category (that we assume complete and cocomplete) with class of weak equivalences $\mathcal{W}$. There is \emph{associated} an \emph{$\infty$-category} $L(\mathcal{M}) = \mathcal{W}^{-1}N(\mathcal{M})$ carrying the homotopy theory of $\mathcal{M}$. In fact, note that $h(L(\mathcal{M}))$ is canonically equivalent to the ordinary homotopy category $\mathsf{Ho}(\mathcal{M})$.

\begin{theo} \label{theo:models} 
The following holds for any model category $\mathcal{M}:$
\begin{enumerate}
    \item $L(\mathcal{M})$ has all limits and colimits.
    \item For any small category $I$, the canonical functor 
    \begin{equation} \label{eq:fun}
       L(\Fun(I,\mathcal{M})) \to \Fun(N(I),L(\mathcal{M})) 
    \end{equation}
    is an equivalence of $\infty$-categories, where $\Fun(I,\mathcal{M})$ is endowed with the pointwise weak equivalences. 
    \item Under the equivalence \eqref{eq:fun}, homotopy limits and homotopy colimits in $\mathcal{M}$ correspond, respectively, to limits and colimits in $L(\mathcal{M})$.
\end{enumerate}
\end{theo}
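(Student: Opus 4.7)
The plan is to establish (2) first, and then deduce (1) and (3) from it by combining the functor category identification with the general principle that Quillen adjunctions descend to adjunctions of the underlying $\infty$-categories. The three statements are tightly linked: once $L(\Fun(I,\mathcal{M})) \simeq \Fun(N(I),L(\mathcal{M}))$ is in place, the constant-diagram Quillen adjunction transports to the $\infty$-categorical world and automatically supplies (co)limits together with the comparison to their homotopical models.

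For (2), the cleanest route I would take is via the Dwyer-Kan (or hammock) simplicial localization $L^H(\mathcal{M})$, whose coherent nerve presents $L(\mathcal{M})$. I would then compare $L^H(\Fun(I,\mathcal{M}))$ with the simplicial functor category $\Fun(I,L^H(\mathcal{M}))$, exploiting that a pointwise weak equivalence in $\Fun(I,\mathcal{M})$ is detected objectwise. Alternatively, one can verify the universal property of the localization directly: for any $\infty$-category $\mathcal{D}$, both $\Fun(L(\Fun(I,\mathcal{M})),\mathcal{D})$ and $\Fun(\Fun(N(I),L(\mathcal{M})),\mathcal{D})$ represent the $\infty$-category of functors $N(\Fun(I,\mathcal{M})) \to \mathcal{D}$ which invert the levelwise weak equivalences, using the compatibility of $N$ and $\Fun(-,-)$ with products and the objectwise nature of $\mathcal{W}$. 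A natural comparison functor is induced by the universal property, and the input above shows it is an equivalence.

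Once (2) is available, (1) and (3) follow by a single argument. The constant-diagram functor $c:\mathcal{M}\to\Fun(I,\mathcal{M})$ is simultaneously a left Quillen functor with respect to an appropriate projective-type model structure and a right Quillen functor with respect to an injective-type one; its left derived functor is $\mathrm{hocolim}$ and its right derived functor is $\mathrm{holim}$. Since Quillen adjunctions induce adjunctions between the associated $\infty$-categories, localizing and transporting along the equivalence of (2) yields that the constant-diagram functor $L(\mathcal{M})\to\Fun(N(I),L(\mathcal{M}))$ admits both a left and a right adjoint. By the very definition of $\infty$-categorical (co)limits these adjoints compute colimits and limits, proving (1); and by construction they are modelled by $\mathrm{hocolim}$ and $\mathrm{holim}$, proving (3).

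The main obstacle I anticipate is the technical handling of $\Fun(I,\mathcal{M})$ in the absence of combinatoriality or cofibrant generation: the standard projective and injective model structures need not both exist, so the Quillen adjunctions above may require replacement by framings, Reedy resolutions, or the hammock localization in order to define and compare the relevant homotopy (co)limits coherently. Equally delicate is the functoriality of these constructions across the diagram category $I$, which is precisely the role of (2); once that equivalence is established, the universal properties of $\infty$-categorical localization absorb most of the remaining bookkeeping.
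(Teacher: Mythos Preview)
The paper does not give its own proof of this theorem; it simply cites \cite[Proposition 7.7.4, Theorem 7.9.8 and Remark 7.9.10]{Cis19}. Your sketch is therefore already more detailed than the paper's treatment, and the overall architecture you propose --- establish (2) first, then transport the constant-diagram adjunctions through it to get (1) and (3) --- is the standard one and aligns with how Cisinski organizes the material.

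That said, your argument for (2) is where the real content lies, and neither of your two suggested routes quite closes. The Dwyer--Kan route is not as direct as you indicate: the simplicial category $\Fun(I,L^H(\mathcal{M}))$ does not obviously model $\Fun(N(I),L(\mathcal{M}))$ without fibrancy hypotheses on the hom-simplicial-sets, and comparing it to $L^H(\Fun(I,\mathcal{M}))$ is itself a nontrivial step. Your alternative via the universal property is circular as written: the claim that $\Fun(\Fun(N(I),L(\mathcal{M})),\mathcal{D})$ represents functors out of $N(\Fun(I,\mathcal{M}))$ inverting levelwise weak equivalences is exactly (2) restated. Cisinski's actual argument avoids model structures on $\Fun(I,\mathcal{M})$ entirely and instead goes through his theory of localizations with a calculus-of-fractions flavor; your instinct to fall back on framings or Reedy resolutions when projective/injective structures fail is the classical workaround (as in Hirschhorn or Dugger), but you would need to spell out how framings give the coherent comparison functor, not just the pointwise one.

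Your deduction of (1) and (3) from (2), via the fact that Quillen adjunctions localize to adjunctions of $\infty$-categories, is correct and is essentially how these statements are packaged once (2) is in hand.
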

\begin{proof}
See \cite[Proposition 7.7.4, Theorem 7.9.8 and Remark 7.9.10]{Cis19}.
\end{proof}

\begin{example}
$\Spc$ is the $\infty$-category associated to the Kan-Quillen model structure on $\sSet$, that is, the localization of $N(\sSet)$ by the weak homotopy equivalences.
\end{example}

\subsection{The homotopy theory of $\infty$-categories} \label{subsec:joyal}
A crucial feature of $\infty$-category theory is that the collection of all (small) $\infty$-categories can be organized in an $\infty$-category $\infCat$ with all limits and colimits. This is realized by the Joyal model structure \cite{Joy08a}.

In the homotopy theory of $\infty$-categories it is useful to consider the \emph{interval} $J$ that is the nerve of the contractible groupoid with objects $\{0,1\}$. Given maps $f,g:X \to Y$ in $\sSet$, a \emph{$J$-homotopy} between them is a map $h: J \times X \to Y$ with $h\vert_{\{0\}\times X} = f$ and $h\vert_{\{1\}\times X} = g$. We denote by $[X,Y]$ the set of $J$-homotopy classes of maps. A \emph{weak categorical equivalence} is a map $f:X \to Y$ in $\sSet$ such that $f_*:[X,\mathcal{C}] \to [Y,\mathcal{C}]$ is bijective for any $\infty$-category $\mathcal{C}$. A functor $f: \mathcal{C}\to\mathcal{D}$ between $\infty$-categories is an \emph{isofibration} if it is an inner fibration and if it has the right lifting property with respect to the inclusion $\{0\} \hookrightarrow J$. 

With these definitions, the following can be found in \cite[ch. 3]{Cis19}:

\begin{theo}[Joyal]
There is a combinatorial model structure on $\sSet$ such that:
\begin{enumerate}
    \item Cofibrations are the monomorphisms, weak equivalences are the weak categorical equivalences and fibrations are the so called \emph{Joyal fibrations}.
    \item Every object is cofibrant and fibrant objects are precisely the $\infty$-categories.
    \item Weak equivalences between $\infty$-categories are the equivalences of $\infty$-categories and fibrations between $\infty$-categories are the isofibrations.
\end{enumerate}
\end{theo}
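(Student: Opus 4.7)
The plan is to apply Cisinski's machinery for producing model structures on the presheaf category $\sSet$ and then identify each part of the statement. I would take the cofibrations to be the monomorphisms (so every object being cofibrant is automatic from $\varnothing \hookrightarrow X$), fix $J$ as the cylinder, and declare the generating anodyne extensions to be the inner horn inclusions $\Lambda^n_k\hookrightarrow\Delta^n$ for $0<k<n$ together with the $J$-generators $(\{e\}\times\Delta^n)\cup(J\times\partial\Delta^n)\hookrightarrow J\times\Delta^n$ for $e\in\{0,1\}$. The small object argument then yields the required factorization axioms, combinatoriality follows from the countable generating sets together with local presentability of $\sSet$, and the $2$-out-of-$3$ property is immediate from the definition of weak categorical equivalence via $[-,\mathcal{C}]$.

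The core of the argument is the identification of fibrant objects with $\infty$-categories. One direction is by construction: a fibrant object lifts against all generating anodyne maps, hence in particular against the inner horns. The converse---that every $\infty$-category automatically lifts against the $J$-generators---is the main technical obstacle and the deepest ingredient. I would reduce it to Joyal's special extension property: every categorically invertible $1$-simplex in an $\infty$-category extends to a map out of $J$, and more generally the $J$-generators admit diagonal fillers. This requires a delicate combinatorial analysis of the shuffle decomposition of simplices in $J\times\Delta^n$, combined with an induction on the skeletal filtration.

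With the fibrant objects identified, the remaining claims are essentially formal. A Joyal fibration between $\infty$-categories is one that lifts against both families of generators, and this amounts exactly to being an inner fibration with the $\{0\}\hookrightarrow J$-lifting property, that is, an isofibration. For weak equivalences between $\infty$-categories, the universal property of $[-,\mathcal{E}]$ on fibrant $\mathcal{E}$, together with the $J$-homotopy calculus, shows that a weak categorical equivalence $f\colon\mathcal{C}\to\mathcal{D}$ admits a $J$-homotopy inverse, which is precisely an equivalence in the $\infty$-categorical sense; conversely every such equivalence induces bijections on $[-,\mathcal{E}]$ for all $\infty$-categories $\mathcal{E}$ and hence is a weak categorical equivalence.
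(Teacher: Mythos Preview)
Your proposal is a reasonable sketch of the argument, but you should be aware that the paper does not actually prove this theorem: it simply states it and refers the reader to \cite[ch.~3]{Cis19}. So there is no ``paper's own proof'' to compare against beyond that citation.

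That said, your outline is essentially the approach one finds in Cisinski's book, which is the reference the paper invokes. You correctly identify the cofibrations, the interval $J$, and the generating anodyne extensions; you correctly isolate the hard step as showing that $\infty$-categories lift against the $J$-generators (Joyal's special extension/lifting theorem); and your deductions of parts~(2) and~(3) from the identification of fibrant objects are accurate. One small caution: the characterization of fibrations between fibrant objects as isofibrations is not entirely ``formal'' from the generators alone---it uses that in a Cisinski model structure the naive fibrations (maps with the right lifting property against the generating anodynes) agree with the genuine fibrations between fibrant objects, which is part of the general Cisinski machinery rather than something immediate. But this is a point of emphasis, not a gap.
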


\begin{defi}
$\infCat$ is the $\infty$-category associated to the Joyal model structure, that is, the localization of $N(\sSet)$ by the weak categorical equivalences.
\end{defi}

\begin{rema}
Different but equivalent definitions of $\infCat$ can be found in \cite{Lur09} (as the coherent nerve of a simplicial category) and \cite{CisNgu22} (explicitly in terms of cocartesian fibrations).
\end{rema}

\begin{rema}
Most of the time we can forget about size issues and think of every $\infty$-category as a (small) simplical set. However, one should note that most interesting examples do not live in $\infCat$, but in the (large) $\infty$-category $\infCAT$ of (not necessarily small) $\infty$-categories. This can be defined as the localization of (not necessarily small) simplicial sets $N(\SSET)$ by the weak categorical equivalences.
\end{rema}

We collect here a couple of basic results about the homotopy theory of $\infty$-categories that will be of constant use.

\begin{prop} \label{prop:fib-mono}
Let $\mathcal{C}$ be an $\infty$-category. For any monomorphism $i: A \to B$ in $\sSet$, the induced map 
$i^*:\Fun(B,\mathcal{C}) \to \Fun(A,\mathcal{C})$
is a Joyal fibration.
\end{prop}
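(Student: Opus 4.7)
The plan is to deduce this from the fact that the Joyal model structure on $\sSet$ is cartesian closed, i.e.\ satisfies the pushout-product axiom with respect to the cartesian product $\times$ and its right adjoint $\Fun(-,-)$. Concretely, that axiom says: if $i \colon A \to B$ and $j \colon X \to Y$ are both monomorphisms, then the pushout-product
\begin{equation*}
i \mathbin{\square} j \colon (B \times X) \cup_{A \times X} (A \times Y) \longrightarrow B \times Y
\end{equation*}
is again a monomorphism, and is a trivial cofibration (an acyclic monomorphism for the Joyal model structure) whenever either $i$ or $j$ is. This is a standard theorem of Joyal, recorded for instance in \cite[ch.\ 3]{Cis19}.

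Granting this, I would argue as follows. The map $i^*$ is a Joyal fibration iff it has the right lifting property against every trivial cofibration $j \colon X \to Y$ of the Joyal model structure. By the adjunction $(Y \times -) \dashv \Fun(Y,-)$, a lift in the square
\begin{equation*}
\begin{array}{ccc}
X & \longrightarrow & \Fun(B,\mathcal{C}) \\
\downarrow {\scriptstyle j} & & \downarrow {\scriptstyle i^*} \\
Y & \longrightarrow & \Fun(A,\mathcal{C})
\end{array}
\end{equation*}
is equivalent, by transposition, to a lift in the square
\begin{equation*}
\begin{array}{ccc}
(B \times X) \cup_{A \times X} (A \times Y) & \longrightarrow & \mathcal{C} \\
\downarrow {\scriptstyle i \mathbin{\square} j} & & \downarrow \\
B \times Y & \longrightarrow & \ast
\end{array}
\end{equation*}
So it suffices to lift against $\mathcal{C} \to \ast$, which, since $\mathcal{C}$ is fibrant in the Joyal model structure (being an $\infty$-category), is a Joyal fibration. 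By the pushout-product axiom the left vertical map $i \mathbin{\square} j$ is a trivial cofibration, so the required lift exists.

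The only nonformal input is the pushout-product axiom itself, which is the main obstacle hidden in this argument; but as mentioned, this is a known theorem about the Joyal model structure. Everything else is a routine use of the Quillen adjunction $(Y \times -) \dashv \Fun(Y,-)$. A small aesthetic variant would be to phrase this as the Leibniz-hom statement: for any Joyal fibration $f \colon \mathcal{C} \to \mathcal{D}$ and any monomorphism $i \colon A \to B$, the induced map
\begin{equation*}
\Fun(B,\mathcal{C}) \longrightarrow \Fun(A,\mathcal{C}) \times_{\Fun(A,\mathcal{D})} \Fun(B,\mathcal{D})
\end{equation*}
is a Joyal fibration, and then specialize to $\mathcal{D} = \ast$; but I would simply do the direct argument above.
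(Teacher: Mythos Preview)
Your argument is correct and is exactly the standard proof of this fact; the paper itself does not give an argument but simply cites \cite[Corollary 3.6.4]{Cis19}, whose proof is essentially the one you wrote. So your proposal agrees with the paper's approach, only with the details spelled out rather than deferred to the reference.
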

\begin{proof}
See \cite[Corollary 3.6.4]{Cis19}.
\end{proof}

\begin{prop} \label{prop:hoPB}
Consider a pullback square in $\sSet$ of the form
$$\begin{tikzcd}
X' \ar[r,"u"] \ar[d,"p'"'] \ar[rd,phantom, "\PB"] & X \ar[d,"p"] \\
Y' \ar[r,"v"'] & Y.
\end{tikzcd}$$
If all objects are $\infty$-categories and $p$ is a Joyal fibration, then it is a pullback in $\infCat$.
\end{prop}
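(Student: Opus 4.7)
The plan is to apply \Cref{theo:models}(3) to the Joyal model structure on $\sSet$, whose associated $\infty$-category is $\infCat$ by definition. Let $I$ denote the walking cospan $(\bullet \to \bullet \leftarrow \bullet)$. Via the equivalence \eqref{eq:fun}, limits in $\infCat = L(\sSet)$ of a cospan diagram $N(I) \to \infCat$ are computed by homotopy limits in $\sSet$ of the corresponding cospan. It therefore suffices to exhibit the given square as a homotopy pullback in the Joyal model structure.

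To that end, I would first note that every simplicial set is cofibrant in the Joyal model structure, and by hypothesis all four vertices of the square are $\infty$-categories, hence fibrant; moreover $p$ is a Joyal fibration. This is the standard setting in which the strict pullback represents the homotopy pullback: factor $v$ as a trivial cofibration $j: Y' \xrightarrow{\sim} Z$ followed by a fibration $q: Z \to Y$, and form the strict pullback $Z \times_Y X$ along the fibration $q$. As a pullback of two fibrations between fibrant objects, this latter square is automatically a homotopy pullback. By the pasting lemma for pullbacks, the canonical comparison map $X' \to Z \times_Y X$ is then obtained as the pullback of $j$ along the fibration $Z \times_Y X \to Z$, and is thus a weak equivalence by right properness of the Joyal model structure. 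Hence $X'$ itself represents the homotopy pullback, as required.

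The main obstacle is precisely the right-properness of the Joyal model structure: unlike left properness (which is automatic from the fact that every object is cofibrant), this is a non-trivial classical fact, proved by showing that pullback of a Joyal weak equivalence along an isofibration remains a weak equivalence. Modulo that input, \Cref{theo:models}(3) translates the homotopy pullback established above into a genuine pullback in $\infCat$, concluding the proof.
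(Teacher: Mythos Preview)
Your overall strategy matches the paper's exactly: show the square is a homotopy pullback in the Joyal model structure, then invoke \Cref{theo:models}. The paper simply cites a standard reference (Cisinski, Corollary~2.3.28) for the first step, whereas you attempt to spell it out.

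There is, however, a genuine error in your argument: the Joyal model structure is \emph{not} right proper. Pullback of a weak categorical equivalence along an arbitrary Joyal fibration need not be a weak equivalence when non-fibrant simplicial sets are involved. So the step ``is thus a weak equivalence by right properness of the Joyal model structure'' is unjustified as stated.

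The fix is that you never needed right properness in the first place. All four corners of the original square are fibrant by hypothesis, and in your auxiliary construction $Z$ and $Z\times_Y X$ are fibrant as well (they fibrate over fibrant objects). The relevant fact---that in \emph{any} model category a strict pullback square with all vertices fibrant and one leg a fibration is a homotopy pullback---holds without any properness assumption; it follows, for instance, from the Reedy description of fibrant cospans, or from Ken Brown's lemma applied to the pullback functor restricted to fibrant objects. This is precisely what Cisinski's cited result provides. So your detour through the factorisation of $v$ is unnecessary: since $p$ is already a fibration between fibrant objects, the strict pullback is already the homotopy pullback, and you can proceed directly to \Cref{theo:models}.
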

\begin{proof}
By general model category theory (see e.g. \cite[Corollary 2.3.28]{Cis19}), it is a homotopy pullback in the Joyal model structure. Then it is a pullback in the associated $\infty$-category by \Cref{theo:models}.
\end{proof}

\begin{nota}
We use the notation $\mathsf{ho\, PB}$ inside a square to indicate a homotopy pullback in the Joyal model structure or a pullback in $\infCat$, and $\mathsf{(ho)PB}$ to indicate that a pullback of simplicial sets is also a homotopy pullback.   
\end{nota}

\subsection{Basics on stable $\infty$-categories} \label{subsec:stable}
Here we want to introduce stable $\infty$-categories in the sense of Lurie \cite{Lur17}, and in particular, the fiber and cofiber constructions that will be central to this article.

\begin{nota}
We abbreviate the simplicial commutative square $\Delta^1\times\Delta^1$ to $\square$.
\end{nota}

Let $\mathcal{C}$ be a \emph{pointed} $\infty$-category, i.e. one having an object which is both initial and final, called a \emph{zero object} and usually denoted $0 \in \mathcal{C}$. A square $\square \to \mathcal{C}$ of the form 
$$\begin{tikzcd} x \ar[r,"f"] \dar &[-0.8em] y \ar[d,"g"] \\[-0.5em] 0 \rar & z \end{tikzcd}$$
is called a \emph{fiber sequence} if it is a pullback in $\mathcal{C}$ and a \emph{cofiber sequence} if it is a pushout in $\mathcal{C}$. When such fiber (resp. cofiber) sequence exists we say that $g$ (resp. $f$) admits a \emph{fiber} (resp. \emph{cofiber}) which is given by $f$ (resp. $g$). 

\begin{cons} \label{cons:cof-fib}
Consider the inclusions $i: \Delta^{\{1,2\}} \xhookrightarrow{\ \ } \Lambda^2_0$ and $ j: \Lambda^2_0 \xhookrightarrow{\ \ } \square$ which are depicted as
$$00 \xrightarrow{\ \ } 01 \quad\quad \xhookrightarrow{\ \ } \quad\quad \begin{tikzcd} 00 \rar \dar &[-1.1em] 01 \\[-0.5em] 10 & \end{tikzcd} \quad\quad \xhookrightarrow{\ \ } \quad\quad \begin{tikzcd} 00 \rar \dar &[-1.1em] 01 \dar \\[-0.5em] 10 \rar & 11 \end{tikzcd}$$
Right Kan extension along $i$ exists for every pointed $\infty$-category; it is right extension by zero. This provides a fully faithful functor 
$$i_*: \Fun(\Delta^1,\mathcal{C}) \xrightarrow{\ \ } \Fun(\Lambda^2_0,\mathcal{C})$$
with essential image the full subcategory $\Fun^0(\Lambda^2_0,\mathcal{C}) \subset \Fun(\Lambda^2_0,\mathcal{C})$ of those diagrams having a zero object in the left-down corner (see e.g. \cite[Proposition 4.3.2.15]{Lur09}).
Now if we assume that every morphism in $\mathcal{C}$ admits a cofiber, left Kan extension along $j$ exists in the full subcategory $\Fun^0(\Lambda^2_0,\mathcal{C})$. Therefore, we get a fully faithful functor
\begin{equation} \label{eq:cofseq-fun}
j_!i_*: \Fun(\Delta^1,\mathcal{C}) \xrightarrow{\ \ } \Fun(\square,\mathcal{C})
\end{equation}
with essential image the full subcategory consisting of the cofiber sequences (using again \cite[Proposition 4.3.2.15]{Lur09}). Finally, one defines the cofiber functor as the composition
$$\cof: \Fun(\Delta^1,\mathcal{C}) \xrightarrow{j_!i_*} \Fun(\square,\mathcal{C}) \xrightarrow{\mathrm{res}} \Fun(\Delta^1,\mathcal{C})$$
where the second map is the restriction to the right vertical morphism in the cofiber sequence square. This sends a morphism $f:x\to y$ in $\mathcal{C}$ to its cofiber $\cof(f):y \to z$, and when there is no confusion, we also write $z = \cof(f)$. Dually, assuming that $\mathcal{C}$ admits fibers, one gets the fiber functor
$$\fib: \Fun(\Delta^1,\mathcal{C}) \xrightarrow{\ \ } \Fun(\Delta^1,\mathcal{C})$$
which sends a morphism $g:y \to z$ in $\mathcal{C}$ to its fiber $\fib(g): x \to y$ (again we may also write $x = \fib(g)$ when there is no confusion). It follows by construction that $\cof$ is left adjoint to $\fib$ whenever both functors exist.

Following an analogous procedure, we can construct the suspension and loop adjunction $\Sigma: \mathcal{C} \rightleftarrows \mathcal{C} :\Omega$. These are given by the mappings
$$\Sigma: x \mapsto \cof(x \to 0) \quad\text{ and }\quad \Omega: z \mapsto \fib(0 \to z).$$
\end{cons}

\begin{defi}
A pointed $\infty$-category $\mathcal{C}$ is \emph{stable} if:
\begin{enumerate}
    \item Every morphism in $\mathcal{C}$ admits a fiber and a cofiber.
    \item \label{item:fib-cofib} A square in $\mathcal{C}$ is a fiber sequence if and only if it is a cofiber sequence.
\end{enumerate}
\end{defi}

\begin{rema} \label{rema:fib-cof-equiv}
We observe that \eqref{item:fib-cofib} in the definition of a stable $\infty$-category is equivalent to the fact that the adjunction $\cof: \Fun(\Delta^1,\mathcal{C}) \rightleftarrows \Fun(\Delta^1,\mathcal{C}) :\fib$ is an equivalence. Then it clearly follows that $\Sigma: \mathcal{C} \rightleftarrows \mathcal{C} :\Omega$ is an equivalence too, and \Cref{prop:def-stable} shows this is a sufficient condition.
\end{rema}

Let us denote $\Fun^\mathrm{cof}(\square,\mathcal{C}) \subset \Fun(\square,\mathcal{C})$ the full subcategory formed by the cofiber sequences, i.e. the essential image of the functor \eqref{eq:cofseq-fun}.

\begin{fact} \label{lemm:cof-eq}
For any stable $\infty$-category $\mathcal{C}$, restriction along the inclusion $\Delta^1 \subset \square$ of the top horizontal arrow in the square induces an equivalence $\Fun^\mathrm{cof}(\square,\mathcal{C}) \isoarrow \Fun(\Delta^1,\mathcal{C})$.
\end{fact}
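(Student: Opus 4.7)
The plan is to show that the restriction functor $r: \Fun(\square,\mathcal{C}) \to \Fun(\Delta^1,\mathcal{C})$ to the top horizontal arrow is inverse to the composite $j_!i_*$ built in \Cref{cons:cof-fib}. Since it was already shown there that $j_!i_*$ is fully faithful with essential image precisely $\Fun^\mathrm{cof}(\square,\mathcal{C})$, i.e.\ an equivalence onto this subcategory, it suffices to exhibit a natural equivalence $r \circ j_!i_* \simeq \mathrm{id}_{\Fun(\Delta^1,\mathcal{C})}$.

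The first step is combinatorial: the two successive inclusions $\Delta^{\{1,2\}} \xhookrightarrow{i} \Lambda^2_0 \xhookrightarrow{j} \square$ used in \Cref{cons:cof-fib} compose to the inclusion of the top horizontal arrow $\Delta^1 \hookrightarrow \square$, since both send the unique non-degenerate $1$-simplex to the edge $(0,0) \to (0,1)$. Consequently $r = i^* \circ j^*$.

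The second step invokes the standard Kan-extension yoga: because $i$ and $j$ are fully faithful, the counit $i^*i_* \Rightarrow \mathrm{id}$ and the unit $\mathrm{id} \Rightarrow j^*j_!$ of the respective adjunctions are natural equivalences; this is implicit in the use of \cite[Proposition 4.3.2.15]{Lur09} already made in \Cref{cons:cof-fib} to identify the essential images. Hence
\[
r \circ (j_! i_*) \;=\; i^*\, j^*\, j_!\, i_* \;\simeq\; i^*\, i_* \;\simeq\; \mathrm{id}_{\Fun(\Delta^1,\mathcal{C})}.
\]
Since $j_!i_*$ is already an equivalence onto $\Fun^\mathrm{cof}(\square,\mathcal{C})$ and $r$ provides a one-sided inverse, $r$ restricted to this subcategory is in fact a two-sided inverse, and in particular an equivalence. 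No serious obstacle arises: once \Cref{cons:cof-fib} is granted, the argument is a purely formal manipulation of Kan extensions. Note that stability of $\mathcal{C}$ is not actually needed here — only the existence of cofibers.
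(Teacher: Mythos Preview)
Your proof is correct and essentially identical to the paper's own argument: both identify the top-edge inclusion as $ji$, use that $j_!i_*$ is an equivalence onto $\Fun^\mathrm{cof}(\square,\mathcal{C})$ from \Cref{cons:cof-fib}, and then verify $i^*j^*j_!i_* \simeq i^*i_* \simeq \mathrm{id}$ via full faithfulness of the Kan extensions. Your closing remark that only the existence of cofibers (not full stability) is needed is a valid observation the paper leaves implicit.
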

\begin{proof}
Keeping the notation from \Cref{cons:cof-fib}, the inclusion is $ji:\Delta^1 \subset \square$, and the composition of fully faithful Kan extensions $j_!i_*:\Fun(\square,\mathcal{C}) \to \Fun(\Delta^1,\mathcal{C})$ induces by definition an equivalence onto $\Fun^\mathrm{cof}(\square,\mathcal{C})$. Because both Kan extensions $i_*$ and $j_!$ are fully faithful, we have natural equivalences $(ji)^*j_!i_* = i^*j^*j_!i_* \simeq i^*i_* \simeq \mathrm{id}$. Hence restriction along the inclusion $\Delta^1 \subset \square$ is a homotopy left-inverse to the equivalence $\Fun(\Delta^1,\mathcal{C})\to\Fun^\mathrm{cof}(\square,\mathcal{C})$, and so an equivalence too.
\end{proof}

\begin{prop} \label{prop:def-stable}
For a pointed $\infty$-category $\mathcal{C}$, the following are equivalent:
\begin{enumerate}
    \item $\mathcal{C}$ is a stable $\infty$-category.
    \item $\mathcal{C}$ admits finite limits and finite colimits, and pullbacks and pushouts coincide.
    \item $\mathcal{C}$ admits fibers and the loop functor $\Omega: \mathcal{C} \to \mathcal{C}$ is an equivalence.
    \item $\mathcal{C}$ admits cofibers and the suspension functor $\Sigma: \mathcal{C} \to \mathcal{C}$ is an equivalence.
\end{enumerate}
\end{prop}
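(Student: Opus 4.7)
The plan is to establish the cycle (2)$\Rightarrow$(1), (1)$\Rightarrow$(3)$\wedge$(4), (3)$\Rightarrow$(1), and (1)$\Rightarrow$(2); then (4)$\Rightarrow$(1) follows by passing to the opposite $\infty$-category, since all the conditions are self-dual under that operation.

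The easiest direction is (2)$\Rightarrow$(1): assuming finite limits, finite colimits, and pullback$=$pushout, every morphism has a fiber and a cofiber as a pullback and a pushout against the zero object, and the stability axiom on fiber versus cofiber sequences is immediate from the coincidence of pullbacks and pushouts. For (1)$\Rightarrow$(3) and (1)$\Rightarrow$(4) I would cite \cref{rema:fib-cof-equiv} directly: stability forces the adjunction $\cof \dashv \fib$ on $\Fun(\Delta^1,\mathcal{C})$ to be an equivalence, and restricting along the two inclusions $\{0\},\{1\} \hookrightarrow \Delta^1$ (after pinning the opposite endpoint to $0$) recovers $\Sigma$ and $\Omega$, which therefore become equivalences too.

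For (1)$\Rightarrow$(2) one must construct arbitrary finite limits and colimits. Finite coproducts are built as iterated pushouts over $0$, finite products dually, and a general pushout $y\sqcup_x z$ is produced in two stages by first forming the cofiber of $x\to y$ and then pasting against $z$ (the dual argument handles pullbacks); this uses only existence of fibers and cofibers. The coincidence pullback$=$pushout then reduces, via a $3\times 3$-diagram chase on the square, to the single case of fiber-versus-cofiber sequences supplied by \cref{rema:fib-cof-equiv}.

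The main obstacle is (3)$\Rightarrow$(1), where cofibers must be manufactured from fibers using only the $\Omega$-equivalence. Given $f:x\to y$, let $\Sigma$ be a quasi-inverse to $\Omega$, form $\Sigma f:\Sigma x \to \Sigma y$, take its fiber $z \to \Sigma x$, and then take the fiber of that; the counit identifies this iterated fiber with $y \simeq \Omega\Sigma y \to z$, and a careful tracking shows the resulting map is $f$. This furnishes a commutative square with corners $x,y,0,z$ which is a pullback by construction, and I claim it is also a pushout, hence exhibits $z=\cof(f)$. Universality is checked after applying $\map{\mathcal{C}}(-,w)$ and invoking the natural equivalence of pointed spaces $\map{\mathcal{C}}(\Sigma(-),w) \simeq \Omega\map{\mathcal{C}}(-,w)$, which converts fiber-universality of the top half into cofiber-universality of the bottom half. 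To carry this out functorially in $f$, I would lift the pointwise construction to the diagram $\infty$-category $\Fun(\Delta^1,\mathcal{C})$ using the fiber analogue of \cref{lemm:cof-eq}; this simultaneously yields the existence of all cofibers and the fiber-sequence$=$cofiber-sequence axiom. The truly delicate point is identifying the twice-iterated fiber of $\Sigma f$ with $f$ as a morphism in $\mathcal{C}$, not merely equating source and target; this is exactly the step where the full equivalence hypothesis on $\Omega$ (and not, say, mere conservativity or faithfulness) is indispensable.
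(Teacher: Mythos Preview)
The paper does not actually prove this proposition; it simply defers to \cite{RieVer22}, \cite{Lur17}, and \cite{Har17}. Your proposal is therefore more ambitious than the paper's own treatment, and most of it is sound: (2)$\Rightarrow$(1) and (1)$\Rightarrow$(3),(4) are routine as you say, and your outline of (1)$\Rightarrow$(2) --- build a general pushout from a fiber followed by a cofiber via pasting, then reduce pullback$=$pushout to fiber$=$cofiber --- is the standard line, if tersely stated (your phrase ``the cofiber of $x\to y$ then pasting against $z$'' should really read ``the cofiber of $\fib(x\to y)\to z$'').

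The genuine gap is in (3)$\Rightarrow$(1). Your candidate $z=\fib(\Sigma f)$ for $\cof(f)$ is correct, and the identification $\fib(y\to z)\simeq x$ with induced map $f$ does go through via rotation of fiber sequences. But the verification that the square is a \emph{pushout} cannot be carried out by ``applying $\map{\mathcal{C}}(-,w)$'' as you propose. The functor $\map{\mathcal{C}}(-,w)$ sends colimits to limits, not limits to anything; so the fiber description of $z$ gives no direct handle on $\map{\mathcal{C}}(z,w)$. The identity $\map{\mathcal{C}}(\Sigma(-),w)\simeq\Omega\map{\mathcal{C}}(-,w)$ is true but does not bridge this variance mismatch: what you would need is that $\map{\mathcal{C}}(-,w)$ takes fiber sequences to fiber sequences, and that is exactly the statement ``fiber sequence $=$ cofiber sequence'' you are trying to prove. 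The honest argument here (this is precisely the step the paper singles out as requiring Harpaz's ``direct and clever proof'') uses that the hypothesis forces every mapping space to be an infinite loop space, hence grouplike, and leverages this extra structure to promote a comparison map to an equivalence --- a maneuver your sketch does not supply.
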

\begin{proof}
A complete proof can be found in \cite[Theorem 4.4.12]{RieVer22}, although all equivalences appear in some form in \cite{Lur17}. See also \cite[Proposition 2.4]{Har17} for a direct and clever proof of $(4) \Rightarrow (1)$.
\end{proof}

\begin{theo}[{\cite[Theorem 1.1.2.14]{Lur17}}]
Let $\mathcal{C}$ be a stable $\infty$-category. Then its homotopy category $h\mathcal{C}$ has a canonical structure of triangulated category induced by the suspension functor together with the cofiber sequences.
\end{theo}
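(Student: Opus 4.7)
The plan is to define the shift functor and the class of distinguished triangles on $h\mathcal{C}$, and then verify the Verdier axioms TR1--TR4. The shift is $[1] := \Sigma : h\mathcal{C} \to h\mathcal{C}$, which is an autoequivalence by \Cref{prop:def-stable}, with inverse $[-1] = \Omega$. To define distinguished triangles, I consider coherent diagrams $\Delta^1 \times \Delta^2 \to \mathcal{C}$ of the shape
$$\begin{tikzcd} x \rar \dar & y \rar \dar & 0 \dar \\ 0 \rar & z \rar & w \end{tikzcd}$$
in which both squares are pushouts and the marked corners are zero objects. Pasting then produces a canonical equivalence $w \simeq \Sigma x$, so each such diagram yields a sequence $x \to y \to z \to \Sigma x$ in $h\mathcal{C}$, which I call a \emph{standard triangle}. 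A triangle in $h\mathcal{C}$ is then declared \emph{distinguished} when it is isomorphic to a standard one.

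Axioms TR1 and TR3 are then essentially direct. The identity triangle arises from taking $y = x$ and $z = 0$; any morphism $f : x \to y$ in $h\mathcal{C}$ lifts to $\mathcal{C}$ and can be extended to a standard triangle by iterating the cofiber construction of \Cref{cons:cof-fib}; and closure under isomorphism is built into the definition. For TR3, given a morphism between the first two terms of two distinguished triangles, I lift it to coherent diagrams $\Delta^1 \times \Delta^2 \to \mathcal{C}$ using the fully faithful Kan extension of \Cref{cons:cof-fib}, and read off the induced third map together with the required commutativity in $h\mathcal{C}$.

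For TR2 (rotation) I extend the defining diagram by one more column of pushouts,
$$\begin{tikzcd} x \rar \dar & y \rar \dar & 0 \dar \\ 0 \rar & z \rar \dar & \Sigma x \dar \\ & 0 \rar & w' \end{tikzcd}$$
whose bottom two rows exhibit $y \to z \to \Sigma x \to w'$ as a standard triangle, and a further pasting identifies $w' \simeq \Sigma y$ so that the final map becomes $\pm\Sigma f$. Tracking this sign is the subtlest point of the whole argument: comparing the identification of $w'$ with $\Sigma y$ produced by rotation to the standard one coming from $y \to 0 \to \Sigma y$ amounts to evaluating the endomorphism of $\Sigma y$ induced by the nontrivial symmetry of $\square$, which one must show equals $-\mathrm{id}$ in $h\mathcal{C}$. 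This is the main obstacle of the proof and requires some care with homotopy coherence.

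Finally, for TR4 (octahedral axiom), given composable morphisms $f : x \to y$ and $g : y \to z$ in $\mathcal{C}$, I build the $3 \times 3$ coherent diagram
$$\begin{tikzcd} x \rar["f"] \dar & y \rar["g"] \dar & z \dar \\ 0 \rar \dar & \cof(f) \rar \dar & \cof(gf) \dar \\ & 0 \rar & \cof(g) \end{tikzcd}$$
by iterating the cofiber construction so that every square is a pushout. Extracting its rows, columns, and the composite $x \to z$ produces the three distinguished triangles on $f$, $g$, $gf$, together with the morphisms between them required by the octahedral axiom. Apart from the sign analysis in TR2, everything reduces to standard coherent Kan extension arguments which are all available in a stable $\infty$-category thanks to the fiber/cofiber equivalence noted in \Cref{rema:fib-cof-equiv}.
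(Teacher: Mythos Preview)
The paper does not prove this theorem at all: it is stated with a bare citation to \cite[Theorem 1.1.2.14]{Lur17}, so there is no ``paper's own proof'' to compare against. Your outline is essentially a faithful sketch of Lurie's argument in that reference, and the overall strategy (define distinguished triangles via coherent $\Delta^1\times\Delta^2$-diagrams of iterated cofibers, then verify TR1--TR4 by Kan-extension manipulations, with the sign in TR2 as the one genuinely delicate step) is correct.

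One omission worth flagging: a triangulated category must in particular be additive, and you never address why $h\mathcal{C}$ is an additive category. This is not automatic from the data you introduce and requires a separate argument (in Lurie this is \cite[Lemma 1.1.2.9]{Lur17}, using that $\Omega^2$ endows each mapping space with a homotopy-commutative group structure, hence each $\Hom{h\mathcal{C}}(x,y)$ with an abelian group structure bilinear in composition). Without this, the sign discussion in TR2 does not even make sense, since ``$-\mathrm{id}$'' presupposes an additive structure on morphisms. Apart from this gap, your sketch is sound.
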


\begin{examples}[of stable $\infty$-categories] Let us show the vast generality in which our results will be stated. Morally, almost all triangulated categories arise as the homotopy category of a stable $\infty$-category.
\begin{enumerate}[(i)]
    \item The derived $\infty$-category $\D{\mathcal{A}}$ of an abelian category $\mathcal{A}$ is the ($\infty$-categorical) localization of the category of complexes $\mathsf{Ch}(\mathcal{A})$ by the quasi-isomorphisms. If $\mathcal{A}$ is at least a Grothendieck abelian category (e.g. $\mathsf{Mod}\, R$ for a ring $R$, or $\mathsf{Qcoh}(\mathbb{X})$ for a scheme $\mathbb{X}$), then $\D{\mathcal{A}}$ is the $\infty$-category associated to a (stable) model structure on $\mathsf{Ch}(\mathcal{A})$ (see e.g. \cite{Hov01}). For a ring $R$, we denote $\mathcal{D}^\mathsf{per}(R)$ the subcategory of compact objects (the perfect complexes).
    Similarly, the bounded derived $\infty$-category $\Dd{b}{\mathcal{A}}$ is the localization of the category of bounded complexes $\mathsf{Ch}^b(\mathcal{A})$ by the quasi-isomorphisms.
    \item The stable $\infty$-category $\mathsf{St}(\mathcal{E})$ of a Frobenius exact category $\mathcal{E}$ (e.g. $\mathsf{Mod}\, R$ for a quasi-Frobenius ring $R$) is the ($\infty$-categorical) localization of $\mathcal{E}$ by the stable equivalences (i.e. those  isomorphisms in the additive quotient $\mathcal{E}/\mathsf{Proj}\, \mathcal{E}$).
    \item Let $\mathcal{D}$ be a dg category. One can define its \emph{dg nerve} $N_\mathsf{dg}(\mathcal{D})$ which is an $\infty$-category (see \cite[sec. 1.3.1]{Lur17} or \cite[sec. 2.5.3]{Lur25}) carrying the homological information in $\mathcal{D}$, so that $hN_\mathsf{dg}(\mathcal{D})$ recovers the homotopy category $H^0(\mathcal{D})$. If $\mathcal{D}$ is pretriangulated, then $N_\mathsf{dg}(\mathcal{D})$ is a stable $\infty$-category and $hN_\mathsf{dg}(\mathcal{D}) \simeq H^0(\mathcal{D})$ as triangulated categories (see \cite[Theorem 4.3.1]{Fao17}). This of course includes all derived dg categories $\mathsf{D}_\mathsf{dg}(A)$ of dg algebras $A$, giving rise to their corresponding derived $\infty$-categories $\mathcal{D}(A)$.
    \item The $\infty$-category of spectra $\Sp$ is the stabilization of the $\infty$-category of pointed spaces $\Spc_* (= \Spc_{\Delta^0/})$, that is, the limit in $\infCAT$
    $$\Sp = \Sp(\Spc_*) = \varprojlim\left(\Spc_* \xleftarrow{\, \Omega\, } \Spc_* \xleftarrow{\, \Omega\, } \Spc_* \xleftarrow{\ \ } \cdots\right).$$
    Its subcategory of compact objects is the $\infty$-category of finite spectra $\fSp$, which can be described as the Spanier-Whitehead $\infty$-category of finite pointed spaces $\fSpc_*$, that is, the colimit in $\infCAT$
    $$\fSp = \mathsf{SW}(\fSpc_*) = \varinjlim\left(\fSpc_* \xrightarrow{\, \Sigma\, } \fSpc_* \xrightarrow{\, \Sigma\, } \fSpc_* \xrightarrow{\ \ } \cdots\right).$$
    $\fSp$ and $\Sp$ are, respectively, the universal examples of a stable and presentable stable $\infty$-category (c.f. \cite[sec. C.1.1]{Lur18} and \cite[sec. 1.4]{Lur17}).
    \item Let $R$ be an $\mathbb{E}_1$-ring in the sense of \cite[ch. 7]{Lur17}, i.e. an associative algebra object in $\Sp$. The $\infty$-category of (right) $R$-module spectra $\mathsf{Mod}_R$ is the corresponding $\infty$-category of right modules for this algebra object. One should note that this already includes all derived $\infty$-categories of rings $\mathcal{D}(S) \simeq \mathsf{Mod}_{HS}$ by considering the Eilenberg-Maclane spectrum $HS$, the $\infty$-category of spectra $\Sp \simeq \Mod_\mathbb{S}$ as modules over the sphere spectrum $\mathbb{S}$, and all derived $\infty$-categories of dg algebras \cite[sec. 7.1.4]{Lur17}.
\end{enumerate}
\end{examples}

A functor between stable $\infty$-categories is called \emph{exact} if it preserves zero objects and cofiber sequences (equivalently, it preserves all finite limits and finite colimits, c.f. \cite[Proposition 1.1.4.1]{Lur17}). We denote $\infCat^\mathsf{ex}$ the subcategory of $\infCat$ with objects stable $\infty$-categories and morphisms exact functors.

\begin{prop} \label{prop:closure-stable}
Stable $\infty$-categories enjoy the following closure properties:
\begin{enumerate}
    \item Let $\mathcal{C}$ be a stable $\infty$-category and $K \in \infCat$. Then $\Fun(K,\mathcal{C})$ is stable.
    \item The $\infty$-category $\infCat^\mathsf{ex}$ has all limits and filtered colimits, and the inclusion $\infCat^\mathsf{ex} \subset \infCat$ preserves both of them.
\end{enumerate}
\end{prop}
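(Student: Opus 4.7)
The plan is to invoke the characterization of stability in \Cref{prop:def-stable}(2): a pointed $\infty$-category is stable iff it has finite limits and colimits and pushouts coincide with pullbacks. For (1), the zero object of $\Fun(K, \mathcal{C})$ is the constant functor at $0 \in \mathcal{C}$, and all limits and colimits in $\Fun(K, \mathcal{C})$ exist and are computed pointwise in $\mathcal{C}$. Hence a commutative square in $\Fun(K, \mathcal{C})$ is a pushout (resp. pullback) iff each pointwise square is, and pointwise the two notions coincide by stability of $\mathcal{C}$. This suffices.

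For (2), given a small diagram $F \colon I \to \infCat^\mathsf{ex}$, I would form its limit $\mathcal{L} = \lim F$ in $\infCat$. Finite limits in $\mathcal{L}$ exist and are computed componentwise because limits commute with limits. For finite colimits, the key point is that the transition functors of $F$ are exact and hence preserve such colimits; this is exactly the hypothesis needed to conclude that finite colimits in the limit $\mathcal{L}$ exist and are likewise computed componentwise. Since pushouts and pullbacks coincide in each $F(i)$, the same holds in $\mathcal{L}$, so $\mathcal{L}$ is stable. Each projection $\mathcal{L} \to F(i)$ preserves finite limits (trivially) and finite colimits (by pointwise computation), so is exact; this exhibits $\mathcal{L}$ as the limit in $\infCat^\mathsf{ex}$ as well.

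For filtered colimits, given $F \colon I \to \infCat^\mathsf{ex}$ with $I$ filtered, I would form $\mathcal{L} = \varinjlim_I F$ in $\infCat$ and use the fundamental fact that filtered colimits commute with finite limits in $\infCat$ (see e.g. \cite[sec. 5.3.3]{Lur09}). Concretely, every finite diagram $D \to \mathcal{L}$ factors up to equivalence through some canonical functor $F(i) \to \mathcal{L}$, where it admits limits and colimits; these remain (co)limits in $\mathcal{L}$ because the transition maps are exact. This produces all finite limits and colimits in $\mathcal{L}$, together with the fact that each $F(i) \to \mathcal{L}$ preserves them and is therefore exact. Stability of $\mathcal{L}$ then follows from stability of each $F(i)$: a pushout square in $\mathcal{L}$ comes from a pushout in some $F(i)$, which is also a pullback there, and remains a pullback in $\mathcal{L}$ after applying the exact transitions.

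I expect the filtered colimit step to be the main subtlety, since passing finite (co)limits across a filtered colimit of $\infty$-categories is not as transparent as the pointwise computation available for limits; it genuinely rests on the commutation of filtered colimits with finite limits in $\infCat$. Once that is granted, the rest reduces to bookkeeping around the characterization in \Cref{prop:def-stable}.
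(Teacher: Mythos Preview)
Your proposal is correct and essentially follows the standard arguments. Note, however, that the paper itself gives no proof at all: it simply cites \cite[Proposition 1.1.3.1, Theorem 1.1.4.4 and Proposition 1.1.4.6]{Lur17}. What you have written is a faithful sketch of Lurie's own proofs of those results, so there is no genuine difference in approach---you have merely unpacked the citations. The one place where your write-up is slightly imprecise is the reference for the commutation of filtered colimits with finite limits in $\infCat$: \cite[sec.~5.3.3]{Lur09} treats this for $\Spc$, while the statement you need (that a finite diagram in a filtered colimit of $\infty$-categories factors through a stage, and that cones detected at a stage remain (co)limit cones) is closer to \cite[Proposition 1.1.4.6]{Lur17} and its proof, or to the compact generation of $\infCat$ \cite[Lemma 5.3.1.18, Corollary 5.5.7.4]{Lur09}.
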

\begin{proof}
See \cite[Proposition 1.1.3.1, Theorem 1.1.4.4 and Proposition 1.1.4.6]{Lur17}.
\end{proof}

\subsection{Stabilization of compactly generated $\infty$-categories}

Given a pointed $\infty$-category with finite limits $\mathcal{C}$, there is a universal recipe to make it into a stable $\infty$-category. Its \emph{stabilization} is defined as the inverse limit in $\infCat$
$$\Sp(\mathcal{C}) = \varprojlim\left(\mathcal{C} \xleftarrow{\, \Omega\, } \mathcal{C} \xleftarrow{\, \Omega\, } \mathcal{C} \xleftarrow{\ \ } \cdots\right)$$
and it gives a right adjoint to the inclusion $\infCat^\mathsf{ex}\subset\infCat^{\mathsf{lex},\ast}$ of stable $\infty$-categories inside pointed $\infty$-categories with finite limits. The unit map is commonly denoted $\Sigma^\infty:\mathcal{C} \to \Sp(\mathcal{C})$. If $\mathcal{C}$ is not pointed but has a final object $e$, we replace $\mathcal{C}$ by its $\infty$-category of pointed objects $\mathcal{C}_* = \mathcal{C}_{e/}$ and write $\Sp(\mathcal{C}) = \Sp(\mathcal{C}_*)$.
Dually, for a pointed $\infty$-category with finite colimits $\mathcal{C}$, its \emph{Spanier-Whitehead $\infty$-category} is the direct colimit in $\infCat$
$$\mathsf{SW}(\mathcal{C}) = \varinjlim\left(\mathcal{C} \xrightarrow{\, \Sigma\, } \mathcal{C} \xrightarrow{\, \Sigma\, } \mathcal{C} \xrightarrow{\ \ } \cdots\right),$$
which is a left adjoint to the inclusion $\infCat^\mathsf{ex}\subset\infCat^{\mathsf{rex},\ast}$.

When $\mathcal{C}$ is compactly generated (i.e. a finitely presentable $\infty$-category) these two constructions interact specially well; namely, there is a canonical equivalence $\Sp(\mathsf{Ind}(\mathcal{C_0})) \simeq \mathsf{Ind}(\mathsf{SW}(\mathcal{C_0}))$ \cite[Remark C.1.1.6]{Lur18}, where $\mathsf{Ind}$ denotes ind-completion. This produces a \enquote{left} universal property for $\Sp(\mathcal{C})$. We prove here a slight generalization\footnote{It is a universal property of $\Sp(\mathcal{C})$ among all cocomplete stable $\infty$-categories instead of only presentable ones.} of \cite[Corollary 1.4.4.5]{Lur17} for compactly generated $\infty$-categories that we could not find in the literature.

We denote $\Fun^\mathsf{L}, \Fun^\mathsf{R}, \Fun^\mathsf{rex}$, $\Fun^\mathsf{lex}$ and $\Fun^\mathsf{ex}$ the full subcategories of $\Fun(-,-)$ spanned by colimit preserving, limit preserving, right exact, left exact and exact functors, respectively.

\begin{prop} \label{prop:stabilization}
Let $\mathcal{C}$ be a compactly generated $\infty$-category and $\mathcal{D}$ a cocomplete stable $\infty$-category. Then composition with $\Sigma^\infty: \mathcal{C} \to \Sp(\mathcal{C})$ induces an equivalence
$$\Fun^\mathsf{L}(\Sp(\mathcal{C}),\mathcal{D}) \xrightarrow{\ \simeq \ } \Fun^\mathsf{L}(\mathcal{C},\mathcal{D}).$$
\end{prop}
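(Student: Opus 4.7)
The plan is to combine the two universal properties already at hand---that of $\mathsf{Ind}$-completion and that of the Spanier--Whitehead construction---using as a bridge the equivalence $\Sp(\mathsf{Ind}(\mathcal{C}_0)) \simeq \mathsf{Ind}(\mathsf{SW}(\mathcal{C}_0))$ cited just above. First I would let $\mathcal{C}_0 \subseteq \mathcal{C}$ denote the full subcategory of compact objects, so that $\mathcal{C} \simeq \mathsf{Ind}(\mathcal{C}_0)$ with $\mathcal{C}_0$ small and admitting finite colimits (passing tacitly to $\mathcal{C}_*$ and $(\mathcal{C}_0)_*$ if $\mathcal{C}$ is not itself pointed). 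Under the above equivalence, $\Sigma^\infty \colon \mathcal{C} \to \Sp(\mathcal{C})$ becomes the $\mathsf{Ind}$-extension of the canonical right exact functor $\sigma \colon \mathcal{C}_0 \to \mathsf{SW}(\mathcal{C}_0)$ inserting $\mathcal{C}_0$ as the first stage of the defining tower.

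Because $\mathcal{D}$ is cocomplete, the universal property of $\mathsf{Ind}$-completion \cite[Proposition 5.3.5.10]{Lur09} produces equivalences
$$\Fun^{\mathsf{L}}(\Sp(\mathcal{C}), \mathcal{D}) \simeq \Fun^{\mathsf{rex}}(\mathsf{SW}(\mathcal{C}_0), \mathcal{D}), \qquad \Fun^{\mathsf{L}}(\mathcal{C}, \mathcal{D}) \simeq \Fun^{\mathsf{rex}}(\mathcal{C}_0, \mathcal{D}),$$
both compatible with restriction along $\Sigma^\infty$ (respectively, $\sigma$). So it suffices to check that restriction along $\sigma$ is an equivalence $\Fun^{\mathsf{rex}}(\mathsf{SW}(\mathcal{C}_0), \mathcal{D}) \simeq \Fun^{\mathsf{rex}}(\mathcal{C}_0, \mathcal{D})$. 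Writing $\mathsf{SW}(\mathcal{C}_0)$ as the sequential colimit $\varinjlim(\mathcal{C}_0 \xrightarrow{\Sigma} \mathcal{C}_0 \xrightarrow{\Sigma} \cdots)$ in $\infCat$, this reduces to computing the tower limit $\varprojlim_n \Fun^{\mathsf{rex},*}(\mathcal{C}_0, \mathcal{D})$ whose transitions are precomposition with $\Sigma_{\mathcal{C}_0}$. Since the functors in the tower are right exact, each such transition is naturally equivalent to postcomposition with $\Sigma_\mathcal{D}$; and since $\mathcal{D}$ is stable, $\Sigma_\mathcal{D}$ is an equivalence by \Cref{prop:def-stable}, so every transition is an equivalence and the limit collapses to $\Fun^{\mathsf{rex},*}(\mathcal{C}_0, \mathcal{D}) = \Fun^{\mathsf{rex}}(\mathcal{C}_0, \mathcal{D})$ (the last equality because any right exact functor into a pointed target preserves the zero object).

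The main subtlety---and the reason the author frames this as a \emph{slight generalization} of \cite[Corollary 1.4.4.5]{Lur17}---is doing without presentability of $\mathcal{D}$. This rules out the adjoint-functor-theorem-style arguments available in the presentable case and forces one to go through the two universal properties above. Once one checks that \cite[Proposition 5.3.5.10]{Lur09} genuinely applies to an arbitrary cocomplete $\mathcal{D}$ (as its actual hypotheses permit), and that the Spanier--Whitehead colimit interchange survives without presentability, the proof becomes a formal chain of equivalences.
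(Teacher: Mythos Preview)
Your proposal is correct and follows essentially the same route as the paper: write $\mathcal{C} \simeq \mathsf{Ind}(\mathcal{C}_0)$, use $\Sp(\mathsf{Ind}(\mathcal{C}_0)) \simeq \mathsf{Ind}(\mathsf{SW}(\mathcal{C}_0))$, then chain the universal properties of $\mathsf{Ind}$ and $\mathsf{SW}$. The only differences are cosmetic: the paper cites \cite[Proposition C.1.1.7]{Lur18} for the Spanier--Whitehead step where you spell out the tower argument, it cites \cite[Proposition 5.3.6.2 and Example 5.3.6.8]{Lur09} rather than Proposition 5.3.5.10 for the $\mathsf{Ind}$ step (the latter only gives filtered-colimit-preserving functors, so you do want the former), and it treats the non-pointed case more explicitly via the left adjoint $\mathcal{C} \to \mathcal{C}_*$ and \cite[Lemma 1.4.2.19]{Lur17}.
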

\begin{proof}
First assume that $\mathcal{C}$ is pointed. By \cite[Theorem 5.5.1.1]{Lur09}, we can write $\mathcal{C} = \mathsf{Ind}(\mathcal{C}_0)$ for a pointed small $\infty$-category $\mathcal{C}_0$ with finite colimits. Then we have the following canonical equivalences:
\begin{align*}
\Fun^\mathsf{L}(\Sp(\mathsf{Ind}(\mathcal{C}_0)),\mathcal{D}) &\simeq \Fun^\mathsf{L}(\mathsf{Ind}(\mathsf{SW}(\mathcal{C}_0)),\mathcal{D}) \\&\simeq \Fun^\mathsf{rex}(\mathsf{SW}(\mathcal{C}_0),\mathcal{D}) \\&\simeq \Fun^\mathsf{rex}(\mathcal{C}_0,\mathcal{D}) \simeq \Fun^\mathsf{L}(\mathsf{Ind}(\mathcal{C}_0),\mathcal{D}).
\end{align*}
The second and fourth follow from \cite[Proposition 5.3.6.2 and Example 5.3.6.8]{Lur09} and the third is \cite[Proposition C.1.1.7]{Lur18}. 

If $\mathcal{C}$ is not pointed, replace $\mathcal{C}$ by $\mathcal{C}_*$. The forgetful functor $\mathcal{C}_* \to \mathcal{C}$ has a left adjoint $L:\mathcal{C} \to \mathcal{C}_*$, $x \mapsto x \sqcup e$, and composition with $L$ induces an equivalence $\Fun^\mathsf{L}(\mathcal{C}_*,\mathcal{D}) \isoarrow \Fun^\mathsf{L}(\mathcal{C},\mathcal{D})$, see \cite[Lemma 1.4.2.19]{Lur17}.
\end{proof}

\begin{coro} \label{coro:univ-sp}
Let $\mathcal{D}$ be a stable cocomplete $\infty$-category. Then evaluation on the sphere spectrum induces an equivalence
$$\Fun^\mathsf{L}(\Sp,\mathcal{D}) \xrightarrow{\ \simeq \ } \mathcal{D}.$$
\end{coro}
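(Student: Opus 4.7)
The plan is to combine \Cref{prop:stabilization} with the standard universal property of $\Spc$ as the free cocompletion of the one-point $\infty$-groupoid. First I would take $\mathcal{C}=\Spc$ in \Cref{prop:stabilization}: the $\infty$-category of spaces is compactly generated (it is the ind-completion of $\fSpc$), and $\Sp(\Spc)\simeq\Sp$ by definition of the $\infty$-category of spectra. Since $\Spc$ is not pointed, \Cref{prop:stabilization} is applied after passing to $\Spc_*$; the chain of equivalences in its proof produces
\begin{equation*}
\Fun^\mathsf{L}(\Sp,\mathcal{D}) \xrightarrow{\ \simeq \ } \Fun^\mathsf{L}(\Spc,\mathcal{D})
\end{equation*}
given by precomposition with $\Sigma^\infty\colon\Spc\to\Sp$ (the version sending a space $X$ to the suspension spectrum of $X$ with a disjoint basepoint added).

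Next I would invoke the universal property of $\Spc$ as the free cocomplete $\infty$-category on one generator, namely $\Fun^\mathsf{L}(\Spc,\mathcal{D})\xrightarrow{\simeq}\mathcal{D}$ by evaluation at the terminal object $\ast\in\Spc$ (this is \cite[Theorem 5.1.5.6]{Lur09}, which applies since $\mathcal{D}$ is cocomplete). Composing the two equivalences gives
\begin{equation*}
\Fun^\mathsf{L}(\Sp,\mathcal{D}) \xrightarrow{\ \simeq \ } \Fun^\mathsf{L}(\Spc,\mathcal{D}) \xrightarrow{\ \simeq \ } \mathcal{D},
\end{equation*}
and by construction this composition is evaluation at $\Sigma^\infty(\ast)$. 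To conclude I would identify this object with the sphere spectrum $\mathbb{S}$: unwinding the left adjoint $L\colon\Spc\to\Spc_*$ from the proof of \Cref{prop:stabilization}, the generator $\ast$ is sent to $\ast\sqcup\ast\simeq S^0\in\Spc_*$, whose suspension spectrum is precisely $\mathbb{S}$.

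The argument is essentially a straightforward application of two universal properties already in the literature, so I do not expect substantive obstacles. The only delicate point is bookkeeping: making sure the pointed/unpointed adjustment in \Cref{prop:stabilization} produces the sphere spectrum rather than some shifted or reduced variant, which amounts to tracing $\ast\in\Spc$ through the functors $L$ and $\Sigma^\infty$.
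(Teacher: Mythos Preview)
Your proof is correct and follows exactly the same approach as the paper: apply \Cref{prop:stabilization} with $\mathcal{C}=\Spc$ and then invoke the universal property of $\Spc$ from \cite[Theorem 5.1.5.6]{Lur09}. Your additional verification that the generator is carried to $\mathbb{S}$ via $\ast\mapsto S^0\mapsto\Sigma^\infty S^0$ is a useful detail that the paper leaves implicit.
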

\begin{proof}
Let $\mathcal{C} = \Spc$ in the above proposition and use the universal property of spaces \cite[Theorem 5.1.5.6]{Lur09}. 
\end{proof}

\subsection{Monoidality and linearity} We collect some facts about monoidal $\infty$-categories and their linear actions that will be used in \cref{sec:abstract-rep,sec:picard}. The reader is referred to \cite[ch. 2-4]{Lur17} for the basic definitions of the homotopy coherent versions of monoidal, symmetric monoidal and closed monoidal categories. 

One of the key tools is Lurie's tensor product of presentable $\infty$-categories:

\begin{cons}[{\cite[sec. 4.8]{Lur17}}] \label{cons:tensor-presentable}
The (large) $\infty$-category $\Prcat$ of presentable $\infty$-categories and colimit preserving functors admits a closed symmetric monoidal structure with the following properties:
\begin{itemize}
    \item the monoidal unit is $\Spc$;
    \item the internal hom is $\Fun^\mathsf{L}(-,-)$;
    \item for $\mathcal{C},\mathcal{D} \in \Prcat$, there is a canonical equivalence $\mathcal{C}\otimes\mathcal{D} \simeq \Fun^\mathsf{R}(\mathcal{C}^\mathrm{op},\mathcal{D})$;
    \item a (commutative) algebra object in $\Prcat$ is a (symmetric) monoidal presentable $\infty$-category $\mathcal{C}$ such that its tensor product $\otimes: \mathcal{C} \times \mathcal{C} \to \mathcal{C}$ preserves colimits in both variables; this is called a \emph{presentably (symmetric) monoidal} $\infty$-category.
\end{itemize}
Moreover, the full subcategory $\Prcat_\mathsf{st} \subset \Prcat$ spanned by stable presentable $\infty$-categories inherits a closed symmetric monoidal structure too from that of $\Prcat$, with the only difference that $\Sp$ becomes the monoidal unit. By \cite[Proposition 3.2.1.8]{Lur17}, this implies directly the existence of a presentably symmetric monoidal structure on $\Sp$, the so-called \emph{smash product}, making it the \emph{initial} commutative algebra in $\Prcat_\mathsf{st}$.
\end{cons} 

\begin{examples}[of monoidal $\infty$-categories in algebra] \,
\begin{enumerate}
    \item Let $R$ be an $\mathbb{E}_\infty$-ring in the sense of \cite[ch. 7]{Lur17}, i.e. a commutative algebra object in $\Sp$ (e.g. the sphere spectrum $\mathbb{S}$, or the Eilenberg-Maclane spectrum of a commutative ring). Then $(\Mod_R, \otimes_R, R)$ is a presentably symmetric monoidal stable $\infty$-category. More generally, if $\mathcal{C}$ is presentably symmetric monoidal and $R \in \mathsf{CAlg}(\mathcal{C})$ (a commutative algebra object in $\mathcal{C}$), then the $\infty$-category $\Mod_R(\mathcal{C})$ of $R$-modules over $\mathcal{C}$ is presentably symmetric monoidal.
    \item Let $A$ be an $R$-algebra spectrum, i.e. an algebra object in $\Mod_R$. Then the $\infty$-category ${}_A\mathsf{BMod}_A(\Mod_R) = \Mod_{A^\mathrm{op}\otimes_R A}$ of $A$-bimodule spectra over $R$ is (non-symmetric) monoidal. More generally, if $\mathcal{C}$ is presentably monoidal and $A \in \mathsf{Alg}(\mathcal{C})$ (an algebra object in $\mathcal{C}$), then the $\infty$-category ${}_A\mathsf{BMod}_A(\mathcal{C})$ of $A$-bimodule over $\mathcal{C}$ is (non-symmetric) monoidal. The relative tensor product of $M,N \in {}_A\mathsf{BMod}_A(\mathcal{C})$ is given by the geometric realization of the \emph{two-sided Bar construction} $\mathrm{Bar}_A(M,N)_\bullet: \Delta^{\mathrm{op}} \to \mathcal{C}$ \cite[sec. 4.4]{Lur17}, which informally consists of $\mathrm{Bar}_A(M,N)_n = M \otimes A^{\otimes n} \otimes N$. 
\end{enumerate}  
\end{examples}

\begin{defi}
Let $\mathcal{E}$ be a presentably monoidal $\infty$-category, i.e. an algebra object in $\Prcat$. We denote $\mathsf{Cat}_\mathcal{E} = \Mod_{\mathcal{E}}(\Prcat)$, and call its objects \emph{$\mathcal{E}$-linear $\infty$-categories} and its morphisms \emph{$\mathcal{E}$-linear functors}. We also denote $\mathsf{Fun}^\mathsf{L}_\mathcal{E}(-,-)$ the full subcategory of $\Fun(-,-)$ spanned by the $\mathcal{E}$-linear functors.

We remark that all $\mathcal{E}$-linear $\infty$-categories are presentable and all $\mathcal{E}$-linear functors are colimit preserving.
\end{defi}

\begin{example}
Since $\Sp$ is initial, $\mathsf{Cat}_\Sp = \Prcat_\mathsf{st}$, so $\Sp$-linear $\infty$-categories are just presentable stable $\infty$-categories and every colimit preserving functor is $\Sp$-linear. 

More generally, for $R$ an $\mathbb{E}_\infty$-ring, $\Mod_R$-linear $\infty$-categories are stable $R$-linear $\infty$-categories in the sense of \cite[App. D]{Lur18}.
\end{example}

\begin{example}[Higher Eilenberg-Watts] \label{example:Eilenberg-Watts}
Let $A$ and $B$ be $R$-algebra spectra. By a higher algebra version of Eilenberg-Watts theorem \cite[Proposition 7.1.2.4 and p. 738]{Lur17}, every $R$-linear colimit preserving functor between $\infty$-categories of module spectra is given by tensoring with a bimodule. In particular, there is an equivalence 
$${}_A\mathsf{BMod}_B(\Mod_R) \xrightarrow{\ \simeq \ } \mathsf{Fun}^\mathsf{L}_R(\Mod_A,\Mod_B),\quad M \longmapsto - \otimes_A M.$$
\end{example}

We end this section with closure properties enjoyed by $\mathcal{E}$-linear $\infty$-categories.

\begin{prop} \label{prop:closure-linear}
Let $\mathcal{E}$ be a presentably monoidal $\infty$-category. Then:
\begin{enumerate}
    \item Let $\mathcal{C}$ be $\mathcal{E}$-linear and $K$ a small $\infty$-category. Then there is a canonical equivalence $\Fun(K,\mathcal{C}) \simeq \Fun(K,\Spc) \otimes \mathcal{C}$. In particular, $\Fun(K,\mathcal{C})$ is $\mathcal{E}$-linear too.
    \item The functor $- \otimes \mathcal{E}: \Prcat \to \mathsf{Cat}_\mathcal{E}$ is left adjoint to the inclusion $\mathsf{Cat}_\mathcal{E} \hookrightarrow \Prcat$. In particular, $\mathsf{Cat}_\mathcal{E}$ has all limits and $\mathsf{Cat}_\mathcal{E} \hookrightarrow \Prcat$ preserves them.
\end{enumerate}
\end{prop}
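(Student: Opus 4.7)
The plan is to deduce both claims from standard features of the Lurie tensor product on $\Prcat$ together with the general theory of modules over algebra objects in a symmetric monoidal $\infty$-category.

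For (1), I would first establish the underlying equivalence in $\Prcat$: for any small $\infty$-category $L$ and any presentable $\mathcal{D}$, one has $\mathcal{P}(L)\otimes\mathcal{D}\simeq\Fun(\op{L},\mathcal{D})$ (this is \cite[Proposition 4.8.1.17]{Lur17}, or can be derived by computing $\Fun^\mathsf{L}(\mathcal{P}(L)\otimes\mathcal{D},-)$ using the universal property of the Lurie tensor product together with the universal property of $\mathcal{P}(L)$ as the free cocompletion of $L$). Applied with $L=\op{K}$ and $\mathcal{D}=\mathcal{C}$, this gives $\Fun(K,\Spc)\otimes\mathcal{C}\simeq\Fun(K,\mathcal{C})$ in $\Prcat$. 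To upgrade this to an equivalence of $\mathcal{E}$-linear $\infty$-categories, I would use that since $\Prcat$ is symmetric monoidal, tensoring with the presentable $\infty$-category $\Fun(K,\Spc)$ is a $\Prcat$-linear endofunctor of $\Prcat$ and thus restricts to an endofunctor of $\mathsf{Cat}_\mathcal{E}$ commuting with the $\mathcal{E}$-action on the second factor; the resulting $\mathcal{E}$-module structure on $\Fun(K,\Spc)\otimes\mathcal{C}$ transports along the equivalence to the pointwise $\mathcal{E}$-action on $\Fun(K,\mathcal{C})$.

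For (2), the adjunction $-\otimes\mathcal{E}:\Prcat\rightleftarrows\mathsf{Cat}_\mathcal{E}$ is the standard free--forget adjunction for modules over the algebra $\mathcal{E}\in\mathsf{Alg}(\Prcat)$; concretely, the unit $\mathcal{A}\simeq\mathcal{A}\otimes\Spc\to\mathcal{A}\otimes\mathcal{E}$ induces a natural equivalence
\[
\mathsf{Fun}^\mathsf{L}_\mathcal{E}(\mathcal{A}\otimes\mathcal{E},\mathcal{C})\xrightarrow{\ \simeq\ }\Fun^\mathsf{L}(\mathcal{A},\mathcal{C})
\]
for $\mathcal{A}\in\Prcat$ and $\mathcal{C}\in\mathsf{Cat}_\mathcal{E}$, cf.\ \cite[Corollary 4.2.4.8]{Lur17}. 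For the limits part, I would combine that $\Prcat$ has all small limits (\cite[Proposition 5.5.3.13]{Lur09}) with the general fact that, for an algebra object in a presentable symmetric monoidal $\infty$-category, the forgetful functor $\Mod_\mathcal{E}(\Prcat)\to\Prcat$ creates limits (\cite[Corollary 4.2.3.3]{Lur17}). Together these yield the existence of all limits in $\mathsf{Cat}_\mathcal{E}$ and their preservation by the inclusion.

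I expect the main subtlety to lie in the second half of (1): checking that the equivalence $\Fun(K,\Spc)\otimes\mathcal{C}\simeq\Fun(K,\mathcal{C})$ is genuinely one of $\mathcal{E}$-modules and not merely one of their underlying presentable $\infty$-categories. The rest is essentially bookkeeping with the universal properties packaged in Lurie's higher algebra, but identifying the transported $\mathcal{E}$-action with the obvious pointwise action on $\Fun(K,\mathcal{C})$ requires unpacking the module structure on a tensor product and comparing it with the componentwise action—this is where I would spend most of the effort.
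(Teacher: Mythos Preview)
Your proposal is correct and in fact supplies more detail than the paper itself, which simply defers to the literature: the paper's entire proof is ``See \cite[Corollary 2.2]{Aok23} and \cite[Proposition 4.6.2.17]{Lur17}, respectively.'' Your sketch for (1) via \cite[Proposition 4.8.1.17]{Lur17} and for (2) via the free--forget adjunction and creation of limits (\cite[Corollary 4.2.3.3]{Lur17}) is essentially what those references unpack, so the approaches coincide. The subtlety you flag in (1)---that the equivalence must be one of $\mathcal{E}$-modules, not just of underlying presentable $\infty$-categories---is exactly the point addressed in Aoki's argument; your strategy of using that $\Fun(K,\Spc)\otimes(-)$ is $\Prcat$-linear and hence restricts to $\mathsf{Cat}_\mathcal{E}$ is the right one. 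One minor caution: double-check the reference \cite[Corollary 4.2.4.8]{Lur17}, as that section concerns free algebras rather than free modules; the statement you want (free $\mathcal{E}$-module left adjoint to forgetful) is elsewhere in Chapter 4 of \cite{Lur17}, and the paper's citation \cite[Proposition 4.6.2.17]{Lur17} already packages the limit claim directly.
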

\begin{proof}
See \cite[Corollary 2.2]{Aok23} and \cite[Proposition 4.6.2.17]{Lur17}, respectively.    
\end{proof}

\section{Abstract representation theory} \label{sec:abstract-rep}

In this section, we set the framework for a representation theory with coefficients in abstract stable homotopy theories.

This article is mainly concerned with representations of quivers. By a \emph{quiver} we mean a quadruple $Q=(Q_0,Q_1,s,t)$ consisting of a set of vertices $Q_0$, a set of arrows $Q_1$, and maps $s,t:Q_1 \to Q_0$ corresponding to the source and target, respectively. A quiver is called \emph{finite} if both $Q_0$ and $Q_1$ are finite, \emph{acyclic} if it contains no oriented cycles, and a \emph{tree} if it contains no unoriented cycles. Classically, one identifies a quiver $Q$ with the small category freely generated by it, and \emph{representations} in a category $\mathcal{C}$ are nothing but functors $Q \to \mathcal{C}$. We recall that in the special case that $\mathcal{C} = \mathsf{Mod}\, R$ is a category of modules over a ring, the category of representations is (equivalent to) that of modules over the \emph{path algebra} $RQ$.

Since the emergence of tilting theory \cite{AngHapKra07}, representation theorists have heavily focused on understanding derived categories of representations and their exact equivalences. Originally, results were stated over a field on which they might depend, however, some of them are of such a combinatorial nature that they hold for arbitrary rings, or even abelian categories ---this is for example the case of reflection functors, c.f. \cref{sec:reflections}. We focus on this type of results and abstract the derived category of the coefficients. Our claim is that these results are actually formal consequences of stability \cite{GroSto18b}, and so they should hold over any stable homotopy theory.

\subsection{Homotopy coherent representations} 

Let $Q$ be a quiver and $\mathcal{A}$ an abelian category (e.g. modules or sheaves). We study derived representations by forming the derived category $\mathsf{D}(\mathcal{A}^Q)$, which is a localization of the category of complexes $\mathsf{Ch}(\mathcal{A}^Q)$. Via the canonical equivalence $\mathsf{Ch}(\mathcal{A}^Q) \simeq \mathsf{Ch}(\mathcal{A})^Q$, an object of $\mathsf{D}(\mathcal{A}^Q)$ can be identified with a representation of complexes in $\mathcal{A}$. However, an important observation is that such an identification cannot be made on morphisms:
\begin{equation} \label{eq:coh-diags}
\mathsf{D}(\mathcal{A}^Q) \not\simeq \mathsf{D}(\mathcal{A})^Q.
\end{equation}
This is a distinction between (homotopy) coherent and incoherent diagrams of complexes, respectively. More generally, if $\mathcal{T}$ is a triangulated category, then $\mathcal{T}^Q$ is no longer triangulated. Somehow the process of forming functor categories into a triangulated category forgets about the common homotopical behavior occurring in these categories, and this needs to be fixed. We replace $\mathcal{T}$ by an $\infty$-categorical enhancement $\mathcal{C}$, so that $\mathcal{T} = h\mathcal{C}$, and consider (homotopy coherent) diagrams of shape $Q$ in $\mathcal{C}$.

From now on, we identify a quiver $Q$ with the small $\infty$-category freely generated by it, that is, the nerve of the small category freely generated by it.

\begin{nota}
Given a small $\infty$-category $K$ and a stable $\infty$-category $\mathcal{C}$, we denote the (stable) $\infty$-category of \emph{homotopy coherent representations} by $\mathcal{C}^K = \Fun(K,\mathcal{C})$.
\end{nota}

We emphasize that, unlike the case of triangulated categories, the $\infty$-category $\mathcal{C}^K$ is still stable whenever $\mathcal{C}$ is (\Cref{prop:closure-stable}). Moreover, linearity is also inherited, as shown in \Cref{prop:closure-linear}.

\begin{rema}[Quivers inside simplicial sets] \label{rema:sset-quivers}
A quiver $Q$ can also be regarded as a $1$-dimensionsal simplicial set by considering the pushout
$$\begin{tikzcd} 
\coprod\limits_{\alpha \in Q_1} \partial\Delta^1 \arrow[r,hook] \arrow[d,"{(s,t)}"'] \ar[rd,phantom, "\PO", pos=0.3] &[-0.7em] \coprod\limits_{\alpha \in Q_1} \Delta^1 \arrow[d] \\ \coprod\limits_{v \in Q_0} \Delta^0 \arrow[r] & Q_\bullet 
\end{tikzcd}$$
in the category $\sSet$. Fortunately, there is no difference in terms of representations between $Q_\bullet$ and the $\infty$-category $N(\mathsf{free}(Q))$ freely generated by $Q$. Indeed, the evident comparison map $Q_\bullet \to N(\mathsf{free}(Q))$, sending a vertex to itself and an arrow to the corresponding path of length 1, is inner anodyne (see \cite[Proposition 1.5.7.3]{Lur25}). In particular, it induces a trivial fibration
$$\Fun(N(\mathsf{free}(Q)),\mathcal{C}) \xrightarrow{\ \simeq \ } \Fun(Q_\bullet,\mathcal{C}),$$
for any $\infty$-category $\mathcal{C}$ (see \cite[Theorem 1.5.7.1]{Lur25}). \qed 
\end{rema}

As opposed to \eqref{eq:coh-diags}, one of the key advantages of considering homotopy coherent diagrams is that the derived $\infty$-category of representations can be recovered as an $\infty$-category of representations, as shown in the following result.

\begin{prop} \label{prop:derived-infty-cat}
Let $\mathcal{G}$ be a Grothendieck abelian category and $J$ a small category. There is a canonical equivalence $\D{\mathcal{G}}^J \simeq \D{\mathcal{G}^J}$.

In particular, for any ring $R$ and any finite quiver $Q$, there is a canonical equivalence $\D{R}^Q \simeq \D{RQ}$. Moreover, if $Q$ is acyclic, then $\mathcal{D}^\mathsf{per}(R)^Q \simeq \mathcal{D}^\mathsf{per}(RQ)$.
\end{prop}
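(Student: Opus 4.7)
The plan is to realize both sides as underlying $\infty$-categories of model categories and then invoke \Cref{theo:models}. First, since $\mathcal{G}$ is Grothendieck abelian and $J$ is small, the functor category $\mathcal{G}^J$ is also Grothendieck abelian: (co)kernels and filtered colimits are computed pointwise and thus stay exact, while a generating set of $\mathcal{G}^J$ is obtained from one of $\mathcal{G}$ by applying left adjoints to the evaluation functors. Hence by \cite{Hov01}, already cited in the preliminaries, both $\mathsf{Ch}(\mathcal{G})$ and $\mathsf{Ch}(\mathcal{G}^J)$ carry stable model structures whose weak equivalences are the quasi-isomorphisms, and they realize $\D{\mathcal{G}}$ and $\D{\mathcal{G}^J}$, respectively.

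Under the canonical $1$-categorical isomorphism $\mathsf{Ch}(\mathcal{G}^J) \cong \Fun(J,\mathsf{Ch}(\mathcal{G}))$, the quasi-isomorphisms on the left correspond to the pointwise quasi-isomorphisms on the right, since homology in $\mathcal{G}^J$ is computed pointwise. Applying \Cref{theo:models}(2) to the model category $\mathsf{Ch}(\mathcal{G})$ then produces the chain of canonical equivalences
$$\D{\mathcal{G}^J} = L(\mathsf{Ch}(\mathcal{G}^J)) \simeq L(\Fun(J,\mathsf{Ch}(\mathcal{G}))) \xrightarrow{\ \simeq \ } \Fun(N(J),L(\mathsf{Ch}(\mathcal{G}))) = \D{\mathcal{G}}^J,$$
which proves the general claim. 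For the first half of the corollary, I specialize to $\mathcal{G}=\mathsf{Mod}\,R$ and take $J$ to be the small category freely generated by the finite quiver $Q$; the classical identification $\mathsf{Mod}(R)^Q \simeq \mathsf{Mod}(RQ)$ then feeds into the equivalence above to yield $\D{R}^Q \simeq \D{RQ}$.

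For the perfect subcategories, equivalences of compactly generated stable $\infty$-categories restrict to equivalences on their compact objects, so it suffices to check that $(\D{R}^Q)^\omega = \mathcal{D}^\mathsf{per}(R)^Q$ when $Q$ is finite and acyclic. This rests on the general fact that for a finite simplicial set $K$ and a compactly generated stable $\mathcal{D}$, a functor $F: K \to \mathcal{D}$ is compact in $\Fun(K,\mathcal{D})$ if and only if each $F(v)$ is compact in $\mathcal{D}$: indeed, $\map{\Fun(K,\mathcal{D})}(F,-)$ is the finite end over $K$ of the pointwise mapping functors $\map{\mathcal{D}}(F(v),-)$, and finite limits commute with filtered colimits of spaces. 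The acyclicity of $Q$ guarantees that its free category is indeed finite.

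The only point that requires genuine verification is the pointwise computation of homology in $\mathcal{G}^J$ from paragraph two; the rest is a direct assembly of \Cref{theo:models}, the cited model structure on chain complexes, and the standard compact-object principle for functor $\infty$-categories.
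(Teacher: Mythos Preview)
Your argument follows the same route as the paper's: apply \Cref{theo:models}(2) to the model category $\mathsf{Ch}(\mathcal{G})$ for the main equivalence, then pass to compact objects for the perfect statement. You spell out more of the intermediate steps (that $\mathcal{G}^J$ is again Grothendieck, that quasi-isomorphisms are pointwise), which the paper leaves implicit.

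There is one small gap in your treatment of compacts. Your end argument shows only the inclusion $\mathcal{D}^\mathsf{per}(R)^Q \subseteq (\D{R}^Q)^\omega$: if each $F(v)$ is compact then the finite end of the $\map{\mathcal{D}}(F(v),-)$ commutes with filtered colimits, so $F$ is compact. The reverse inclusion is not addressed. It does hold, and for a reason of the same flavour: for $v$ an object of the finite category $K$, the right adjoint $v_*$ to evaluation is given by $(v_* d)(k) \simeq d^{\Hom{K}(k,v)}$, a finite product, hence preserves filtered colimits; thus evaluation $v^*$ preserves compact objects, and any compact $F$ has compact values. The paper sidesteps this by citing \cite[Proposition 2.8]{Aok23} directly. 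Also, your phrasing ``finite simplicial set $K$'' is slightly off---what you actually use, and correctly note at the end, is that the free category on a finite acyclic quiver is a \emph{finite category}, so that the twisted arrow category indexing the end is finite.
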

\begin{proof}
The first equivalence follows from \Cref{theo:models}. The last one follows from the first by taking compacts, where the acyclic hypothesis is necessary for the compact functors to equal the compact-valued functors (see \cite[Proposition 2.8]{Aok23}). 
\end{proof}

In fact, one can think of representations as modules in much more general situations. We give here a new definition of spectral path algebras.

\begin{defi} \label{def:path-algebra}
Let $A$ be an $\mathbb{E}_1$-ring and $K$ an $\infty$-category with finitely many objects. The \emph{spectral path algebra} $AK$ is the spectral endomorphism\footnote{Any stable $\infty$-category $\mathcal{C}$ is naturally \enquote{enriched} over spectra via the equivalence $\Fun^\mathsf{lex}(\mathcal{C}^\mathrm{op},\Spc) \simeq \Fun^\mathsf{lex}(\mathcal{C}^\mathrm{op},\Sp)$, see e.g. \cite[Corollary 1.4.2.23]{Lur17}.} algebra of $\bigoplus_{k\in K} k_!A$ as an object of $(\mathsf{Mod}_A)^K$, where $k_!$ denotes left Kan extension along $k:\Delta^0\to K$.
\end{defi} 

\begin{rema}
The objects $k_!A \in (\mathsf{Mod}_A)^K$ are analogues of indecomposable projectives at a vertex $k$.
\end{rema}

The above definition is justified by the following major theorem.

\begin{theo}[Schwede-Shipley, Lurie]
If $\mathcal{C}$ is a stable $\infty$-category compactly generated by an object $x$, then it is equivalent to $\mathsf{Mod}_R$ for the $\mathbb{E}_1$-ring $R = \mathsf{End}_\mathcal{C}(x)$.
\end{theo}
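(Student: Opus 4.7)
The plan is to realize the equivalence as an adjunction induced by the compact generator $x$. First I would use that any stable $\infty$-category $\mathcal{C}$ is canonically enriched over spectra, so for every pair of objects $y,z \in \mathcal{C}$ there is a mapping spectrum $\mathrm{Map}^{\Sp}_\mathcal{C}(y,z)$ whose underlying space is $\map{\mathcal{C}}(y,z)$. In particular, $R := \mathrm{Map}^{\Sp}_\mathcal{C}(x,x)$ acquires a natural $\mathbb{E}_1$-ring structure, and each spectrum $\mathrm{Map}^{\Sp}_\mathcal{C}(x,y)$ becomes a right $R$-module via precomposition. These assemble into a functor
$$F := \mathrm{Map}^{\Sp}_\mathcal{C}(x,-) : \mathcal{C} \xrightarrow{\ \ } \Mod_R.$$

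Next I would check that $F$ preserves all colimits. Compactness of $x$ gives that $\map{\mathcal{C}}(x,-)$ preserves filtered colimits, and stability together with \Cref{prop:def-stable} gives that $F$ preserves finite colimits, since on both sides these coincide with finite limits. By the adjoint functor theorem for presentable $\infty$-categories, $F$ admits a left adjoint $G : \Mod_R \to \mathcal{C}$. Computing on the free generator, adjunction yields
$$\map{\mathcal{C}}(G(R),y) \simeq \map{\Mod_R}(R, F(y)) \simeq \Omega^\infty F(y) \simeq \map{\mathcal{C}}(x,y)$$
naturally in $y$, so $G(R) \simeq x$ by Yoneda.

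Now I would propagate these equivalences via a standard colimit-closure argument. The full subcategory of $\Mod_R$ on which the unit $\eta : \mathrm{id} \to FG$ is an equivalence is closed under colimits (both $F$ and $G$ preserve them) and contains $R$, where $\eta_R$ identifies with the identity $R \to F(x) = R$; since $R$ generates $\Mod_R$ under colimits, $\eta$ is an equivalence on all of $\Mod_R$. Dually, the full subcategory of $\mathcal{C}$ on which the counit $\varepsilon : GF \to \mathrm{id}$ is an equivalence is closed under colimits and contains $x$ (since $\varepsilon_x : GF(x) = G(R) \xrightarrow{\simeq} x$), so by the hypothesis that $x$ is a compact generator, $\varepsilon$ is an equivalence on all of $\mathcal{C}$. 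Hence $F$ is an equivalence of stable $\infty$-categories.

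The hard part I anticipate is foundational: promoting the informal functor $F$ above to a genuine $\infty$-functor landing in $\Mod_R$ rather than merely in $\Sp$ requires encoding the spectral enrichment as a coherent $\Sp$-linear structure on $\mathcal{C}$ and verifying that the $R$-module structures on mapping spectra assemble coherently up to all higher homotopies. This is the substance of Lurie's treatment in \cite[sec. 7.1.2]{Lur17}, where $F$ is produced directly from the universal property of $\Mod_R$ as the free presentable stable $\infty$-category on a single object carrying a right $R$-action. Once $F$ is in place, the colimit-closure argument above closes the proof.
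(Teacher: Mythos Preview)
Your sketch is essentially correct and follows the standard Schwede--Shipley/Lurie argument; the paper itself does not give a proof but simply refers to \cite[Theorem 7.1.2.1 and Remarks 7.1.2.2 and 7.1.2.3]{Lur17}, so your proposal is in fact more detailed than what appears there and is in line with the cited reference.

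One small slip worth fixing: from ``$F$ preserves all colimits'' the adjoint functor theorem yields a \emph{right} adjoint, not a left one. What you actually want is that $F = \mathrm{Map}^{\Sp}_\mathcal{C}(x,-)$ preserves all limits (it is a mapping-out functor) and is accessible (it preserves filtered colimits by compactness of $x$); this is the form of the adjoint functor theorem that produces the left adjoint $G$ you use. The fact that $F$ also preserves colimits is still needed later, in your colimit-closure argument for the unit, so nothing you wrote is wasted---just reroute the justification for the existence of $G$. Your acknowledgment that the genuine work lies in constructing $F$ as a coherent functor to $\Mod_R$ (rather than just to $\Sp$) is exactly right and is precisely what \cite[sec.~7.1.2]{Lur17} supplies.
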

\begin{proof}
See \cite[Theorem 7.1.2.1 and Remarks 7.1.2.2 and 7.1.2.3]{Lur17}.
\end{proof}

\begin{coro}
There is a canonical equivalence $(\mathsf{Mod}_A)^K \simeq \mathsf{Mod}_{AK}$.
\end{coro}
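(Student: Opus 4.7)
The plan is to apply the Schwede--Shipley/Lurie theorem just cited to the stable $\infty$-category $\mathcal{C} = (\Mod_A)^K$, using $G = \bigoplus_{k\in K} k_!A$ as a compact generator. By construction of $AK$ as $\mathsf{End}_{(\Mod_A)^K}(G)$, this will identify $(\Mod_A)^K$ with $\Mod_{AK}$ via the canonical functor $\map{(\Mod_A)^K}(G,-)$. So the work reduces to verifying three things about $\mathcal{C}$: it is stable, it is compactly generated, and $G$ is a compact generator.

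Stability of $(\Mod_A)^K$ is immediate from \Cref{prop:closure-stable}(1), while presentability follows from general facts about functor $\infty$-categories out of a small $\infty$-category into a presentable one (or, in our setting, from \Cref{prop:closure-linear}(1), which further gives the natural $\Mod_R$-linear structure in the $\mathbb{E}_\infty$-ring case).

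The main content is the verification that the family $\{k_!A\}_{k\in K}$ is a family of compact generators, after which finiteness of $K_0$ collapses the family to a single compact generator $G$. For generation, the adjunction $k_! \dashv k^*$ gives a natural equivalence of mapping spectra
\begin{equation*}
\map{(\Mod_A)^K}(k_!A, F) \simeq \map{\Mod_A}(A, F(k)) \simeq F(k),
\end{equation*}
so if all these vanish then $F \simeq 0$; hence the family detects zero objects, which together with stability of $\mathcal{C}$ yields generation. For compactness, note that $k^* = \mathrm{ev}_k$ preserves all colimits (colimits in functor $\infty$-categories are computed pointwise), so its left adjoint $k_!$ preserves compact objects; since $A$ is compact in $\Mod_A$, each $k_!A$ is compact in $(\Mod_A)^K$. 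Finiteness of the set of objects $K_0$ then implies that the finite direct sum $G = \bigoplus_{k\in K} k_!A$ is again compact (and is still a generator).

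The only subtlety I anticipate is the role of the assumption that $K$ has finitely many objects: without it one would only get a set of compact generators rather than a single one, producing instead an equivalence with modules over a spectral category rather than over a single $\mathbb{E}_1$-ring spectrum. With that finiteness in hand, Schwede--Shipley applied to $(G, \mathcal{C})$ yields the desired canonical equivalence $(\Mod_A)^K \simeq \Mod_{AK}$.
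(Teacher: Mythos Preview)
Your proposal is correct and follows essentially the same approach as the paper: apply Schwede--Shipley/Lurie to $(\Mod_A)^K$ by showing $\bigoplus_{k\in K} k_!A$ is a compact generator, using the adjunction $k_! \dashv k^*$ for generation and the fact that $k^*$ preserves colimits (so $k_!$ preserves compacts) together with finiteness of $K_0$ for compactness. If anything, your justification of compactness of each $k_!A$ is slightly more explicit than the paper's.
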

\begin{proof}
We only need to note that if $K$ has finitely many objects and $\mathcal{C}$ is compactly generated by $x$, then $\bigoplus_{k\in K} k_!(x)$ is a compact generator of $\mathcal{C}^K$. Indeed, a finite sum of compacts is compact, and for the generation, taking $Y \in \mathcal{C}^K$:
$$0 = \Hom{h\mathcal{C}^K}(\Sigma^n(\textstyle\bigoplus\limits_{k\in K} k_!(x)),Y) \cong \textstyle\bigoplus\limits_{k\in K} \Hom{h\mathcal{C}}(\Sigma^nx,k^*Y), \text{ for all $n \in \mathbb{Z}$},$$
implies $Y_k = 0$ for all $k\in K$, and hence, $Y=0$.
\end{proof} 

Let $R$ be an $\mathbb{E}_\infty$-ring, so that $\mathsf{Mod}_R$ is presentably symmetric monoidal. Then $\mathsf{Mod}_{RQ}$ is the (presentable) $R$-linearization of $Q^\mathrm{op}$ in the following sense:

\begin{prop}
For any $R$-linear $\infty$-category $\mathcal{D}$, there is a canonical equivalence
$$\Fun^\mathsf{L}_R(\mathsf{Mod}_{RQ},\mathcal{D}) \xrightarrow{ \ \simeq \ } \Fun(Q^\mathrm{op},\mathcal{D}).$$
\end{prop}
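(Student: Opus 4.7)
The plan is to reduce the claim to the universal property of presheaves by chaining canonical equivalences supplied by the preliminaries.

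I would proceed in four steps. First, invoke the preceding corollary to identify $\Mod_{RQ} \simeq (\Mod_R)^Q = \Fun(Q,\Mod_R)$, turning the left-hand side into $\Fun^\mathsf{L}_R((\Mod_R)^Q,\mathcal{D})$. Second, apply \Cref{prop:closure-linear}(1) to produce a canonical equivalence of $R$-linear $\infty$-categories
$$(\Mod_R)^Q \simeq \Fun(Q,\Spc) \otimes \Mod_R.$$
Third, use the free--forgetful adjunction $-\otimes\Mod_R \dashv (\text{forgetful})$ between $\Prcat$ and $\mathsf{Cat}_R$ from \Cref{prop:closure-linear}(2) to obtain
$$\Fun^\mathsf{L}_R\bigl(\Fun(Q,\Spc)\otimes\Mod_R,\mathcal{D}\bigr) \simeq \Fun^\mathsf{L}\bigl(\Fun(Q,\Spc),\mathcal{D}\bigr).$$
Finally, since $\Fun(Q,\Spc) \simeq \mathcal{P}(\op{Q})$ and $\mathcal{D}$ is cocomplete (being $R$-linear, hence presentable), apply the universal property of presheaves \cite[Theorem 5.1.5.6]{Lur09}, implemented by restriction along the Yoneda embedding $\op{Q}\hookrightarrow\Fun(Q,\Spc)$, to conclude
$$\Fun^\mathsf{L}\bigl(\Fun(Q,\Spc),\mathcal{D}\bigr) \simeq \Fun(\op{Q},\mathcal{D}).$$
Composing these four equivalences yields the desired canonical equivalence.

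The main point requiring any thought beyond formal manipulation, and the only plausible obstacle, is checking that the resulting composite is naturally identified with restriction along a canonical embedding $\op{Q}\to\Mod_{RQ}$. Tracing the chain, I would expect this embedding to be $v\mapsto v_! R$: the Yoneda image of $v\in\op{Q}$ is $\Hom{Q}(v,-)\in\Fun(Q,\Spc)$, and the $R$-linearization carries this to $\Hom{Q}(v,-)\otimes R \simeq v_! R$ in $(\Mod_R)^Q$. Reassuringly, these are precisely the objects whose spectral endomorphism algebra \emph{defines} $RQ$ in \Cref{def:path-algebra}, so the two presentations of $\Mod_{RQ}$---as modules over the spectral path algebra via Schwede--Shipley (used in the corollary) and as the $R$-linearization of the presheaf $\infty$-category---are compatible by construction. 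I do not anticipate any serious difficulty; the argument is essentially a concatenation of formal adjunctions with the universal property of presheaves.
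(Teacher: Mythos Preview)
Your proposal is correct and follows essentially the same route as the paper's proof: the paper likewise chains the identification $\Mod_{RQ}\simeq\Mod_R\otimes\Fun(Q,\Spc)$ coming from \Cref{prop:closure-linear}(1), the free--forgetful adjunction of \Cref{prop:closure-linear}(2), and the universal property of presheaves \cite[Theorem 5.1.5.6]{Lur09}. Your additional paragraph tracing the composite to restriction along $v\mapsto v_!R$ is extra detail the paper omits, but it is correct and harmless.
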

\begin{proof}
One has the following canonical equivalences:
\begin{align*}
\Fun^\mathsf{L}_R(\mathsf{Mod}_{RQ},\mathcal{D}) &\simeq \Fun^\mathsf{L}_R(\mathsf{Mod}_R \otimes \Fun(Q,\Spc),\mathcal{D}) \\ &\simeq \Fun^\mathsf{L}(\Fun(Q,\Spc),\mathcal{D}) \simeq \Fun(Q^\mathrm{op},\mathcal{D})  
\end{align*}
which follow from \Cref{prop:closure-linear} and \cite[Theorem 5.1.5.6]{Lur09}.
\end{proof}

\begin{coro} \label{coro:bimodules-reps}
There is a canonical equivalence $\mathsf{Mod}_R^{Q^\mathrm{op}\times Q} \simeq {}_{RQ}\mathsf{BMod}_{RQ}(\mathsf{Mod}_R)$.
\end{coro}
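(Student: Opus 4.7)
The plan is to assemble the equivalence by composing three identifications already established in this section. First, apply the higher Eilenberg--Watts equivalence of \Cref{example:Eilenberg-Watts} with $A=B=RQ$, which gives
$${}_{RQ}\mathsf{BMod}_{RQ}(\Mod_R) \simeq \mathsf{Fun}^\mathsf{L}_R(\Mod_{RQ},\Mod_{RQ}).$$
Next, since $\Mod_{RQ}$ is itself $R$-linear, the preceding proposition applied with $\mathcal{D} = \Mod_{RQ}$ yields
$$\mathsf{Fun}^\mathsf{L}_R(\Mod_{RQ},\Mod_{RQ}) \simeq \Fun(Q^\mathrm{op},\Mod_{RQ}).$$
Finally, combining the Schwede--Shipley identification $\Mod_{RQ} \simeq \Mod_R^Q$ established just above with the Cartesian closedness of $\infCat$ produces
$$\Fun(Q^\mathrm{op},\Mod_R^Q) \simeq \Fun(Q^\mathrm{op}\times Q,\Mod_R) = \Mod_R^{Q^\mathrm{op}\times Q}.$$
Composing these three equivalences produces the desired canonical equivalence.

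I do not anticipate any substantial obstacle. Each step is a direct invocation of a result already stated in the preceding subsections, and the composite of natural equivalences is again natural. The only minor verification needed is that the target $\Mod_{RQ}$ of the preceding proposition is genuinely $R$-linear---which is standard, since modules over any $R$-algebra spectrum inherit an $R$-linear structure. Consequently, the canonicity of the final equivalence reduces to that of each of the cited building blocks, and unwinding the construction one sees that a bimodule $M$ on the left corresponds on the right to the representation of $Q^{\mathrm{op}} \times Q$ obtained by restricting $-\otimes_{RQ} M$ along the indecomposable projectives $\{v_! R\}_{v \in Q_0}$, which matches the expected picture.
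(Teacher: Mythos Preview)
Your proof is correct and follows essentially the same approach as the paper: the paper composes the equivalences in the opposite direction, writing $\Mod_R^{Q^\mathrm{op}\times Q} \simeq \Mod_{RQ}^{Q^\mathrm{op}} \simeq \Fun^\mathsf{L}_R(\Mod_{RQ},\Mod_{RQ}) \simeq {}_{RQ}\BMod_{RQ}(\Mod_R)$, but the three ingredients (currying together with $\Mod_R^Q \simeq \Mod_{RQ}$, the preceding proposition, and higher Eilenberg--Watts) are identical.
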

\begin{proof}
There are equivalences:
$$\mathsf{Mod}_R^{Q^\mathrm{op}\times Q} \simeq \mathsf{Mod}_{RQ}^{Q^\mathrm{op}} \simeq \Fun^\mathsf{L}_R(\mathsf{Mod}_{RQ},\mathsf{Mod}_{RQ}) \simeq {}_{RQ}\mathsf{BMod}_{RQ}(\mathsf{Mod}_R)$$
where the last one is Eilenberg-Watts (\Cref{example:Eilenberg-Watts}).
\end{proof}

When passing to representations over arbitrary presentable stable $\infty$-categories, the universality of spectra allows certain reductions to spectral representations. We have found the following valuable description which uses Lurie's tensor product of presentable $\infty$-categories (\Cref{cons:tensor-presentable}).

\begin{prop} \label{prop:univ-eq}
Let $K$ be a small $\infty$-category and $\mathcal{C}$ a presentable stable $\infty$-category. Then there is a canonical equivalence $\mathcal{C}^K \simeq \Sp^K \otimes \mathcal{C}$. 
\end{prop}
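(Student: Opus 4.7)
The plan is to reduce the claim to two applications of \Cref{prop:closure-linear}(1) with $\mathcal{E} = \Spc$, together with the fact that $\Sp$ is the monoidal unit of $\Prcat_{\mathsf{st}}$. Since $\mathcal{C}$ is presentable, it is an object of $\Prcat$, and since $\mathcal{C}$ is stable, the stabilization $\Sp \otimes \mathcal{C}$ is canonically equivalent to $\mathcal{C}$ (this is the content of $\Sp$ being the unit in $\Prcat_{\mathsf{st}}$, cf. \Cref{cons:tensor-presentable}).

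First, I will apply \Cref{prop:closure-linear}(1) with $\mathcal{E} = \Spc$ to the presentable $\infty$-category $\mathcal{C}$ to obtain a canonical equivalence
\begin{equation*}
\mathcal{C}^K \simeq \Fun(K,\Spc) \otimes \mathcal{C}
\end{equation*}
in $\Prcat$. Second, I will apply the same proposition with $\mathcal{C}$ replaced by $\Sp$ (which is trivially $\Spc$-linear, being presentable) to obtain
\begin{equation*}
\Sp^K \simeq \Fun(K,\Spc) \otimes \Sp.
\end{equation*}

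Combining these with the symmetric monoidal structure of $\Prcat$, I can compute
\begin{equation*}
\Sp^K \otimes \mathcal{C} \simeq \bigl(\Fun(K,\Spc) \otimes \Sp\bigr) \otimes \mathcal{C} \simeq \Fun(K,\Spc) \otimes (\Sp \otimes \mathcal{C}) \simeq \Fun(K,\Spc) \otimes \mathcal{C} \simeq \mathcal{C}^K,
\end{equation*}
where the penultimate equivalence uses that $\mathcal{C}$ is stable. All equivalences are canonical, assembled from the associator of $\otimes$, the unit equivalence in $\Prcat_{\mathsf{st}}$, and the natural equivalences provided by \Cref{prop:closure-linear}.

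I do not expect any real obstacle here: both inputs are already recorded in the preliminaries, and the argument is essentially a formal manipulation inside $\Prcat$. The only subtlety worth highlighting is the compatibility between the tensor products in $\Prcat$ and $\Prcat_{\mathsf{st}}$, but this is exactly what is packaged by the statement that $-\otimes\Sp \colon \Prcat \to \Prcat_{\mathsf{st}}$ is a symmetric monoidal left adjoint to the inclusion, so that tensoring with $\Sp$ commutes with the other tensor factors.
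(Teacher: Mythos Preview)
Your argument is correct. The key inputs --- \Cref{prop:closure-linear}(1) applied with $\mathcal{E}=\Spc$ (where every presentable $\infty$-category is automatically $\Spc$-linear since $\Spc$ is the unit of $\Prcat$) and the fact that $\Sp\otimes\mathcal{C}\simeq\mathcal{C}$ for $\mathcal{C}\in\Prcat_{\mathsf{st}}$ --- are both recorded in the preliminaries, and the remaining steps are formal monoidal manipulations.

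However, this is a genuinely different route from the paper's. The paper does not invoke \Cref{prop:closure-linear} at all; instead it unwinds the formula $\mathcal{A}\otimes\mathcal{B}\simeq\Fun^{\mathsf{R}}(\mathcal{A}^{\mathrm{op}},\mathcal{B})$ for the Lurie tensor product and establishes the chain
\[
\Fun(K,\mathcal{C}) \simeq \Fun^{\mathsf{L}}(\Fun(K,\Spc),\mathcal{C}^{\mathrm{op}})^{\mathrm{op}} \simeq \Fun^{\mathsf{L}}(\Fun(K,\Sp),\mathcal{C}^{\mathrm{op}})^{\mathrm{op}} \simeq \Fun^{\mathsf{R}}(\Fun(K,\Sp)^{\mathrm{op}},\mathcal{C})
\]
directly, using the universal property of presheaves and of stabilization (\Cref{prop:stabilization}). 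Your approach is shorter and more conceptual, but it treats \Cref{prop:closure-linear}(1) as a black box imported from \cite{Aok23}; the paper's approach is more self-contained and makes the individual equivalences explicit, at the cost of a longer chain. Either is perfectly acceptable here.
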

\begin{proof}
One has the following canonical equivalences:
\begin{align*}
\Fun(K,\mathcal{C}) &\simeq \Fun(K^\mathrm{op},\mathcal{C}^\mathrm{op})^\mathrm{op} \\
&\simeq \Fun^\mathsf{L}(\Fun(K,\Spc),\mathcal{C}^\mathrm{op})^\mathrm{op} \\
&\simeq \Fun^\mathsf{L}(\Sp(\Fun(K,\Spc)),\mathcal{C}^\mathrm{op})^\mathrm{op} \\
&\simeq \Fun^\mathsf{L}(\Fun(K,\Sp),\mathcal{C}^\mathrm{op})^\mathrm{op} \\
&\simeq \Fun^\mathsf{R}(\Fun(K,\Sp)^\mathrm{op},\mathcal{C}) \simeq \Fun(K,\Sp) \otimes \mathcal{C} 
\end{align*}
where we have used (in order) \cite[Theorem 5.1.5.6]{Lur09}, \Cref{prop:stabilization}, and \cite[Remark 1.4.2.9 and Proposition 4.8.1.17]{Lur17}.
\end{proof}

\begin{coro} \label{coro:sp-equiv}
Let $K$ and $K'$ be small $\infty$-categories. Any equivalence $\Sp^K \simeq \Sp^{K'}$ induces natural equivalences $\mathcal{C}^K\simeq \mathcal{C}^{K'}$ for all presentable stable $\infty$-categories $\mathcal{C}$.
\end{coro}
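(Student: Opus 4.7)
The plan is to deduce the corollary directly from the preceding \Cref{prop:univ-eq}, by transporting the given equivalence along the tensor product in $\Prcat_\mathsf{st}$.

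First I would invoke \Cref{prop:univ-eq} to rewrite both sides, obtaining canonical equivalences
\begin{equation*}
\mathcal{C}^K \simeq \Sp^K \otimes \mathcal{C} \qquad\text{and}\qquad \mathcal{C}^{K'} \simeq \Sp^{K'} \otimes \mathcal{C}
\end{equation*}
in $\Prcat_\mathsf{st}$. Since the symmetric monoidal structure on $\Prcat$ (and hence its restriction to $\Prcat_\mathsf{st}$, c.f. \Cref{cons:tensor-presentable}) is functorial, any equivalence $\varphi: \Sp^K \isoarrow \Sp^{K'}$ in $\Prcat_\mathsf{st}$ induces an equivalence $\varphi\otimes\mathrm{id}_\mathcal{C}: \Sp^K\otimes\mathcal{C} \isoarrow \Sp^{K'}\otimes\mathcal{C}$. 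Concatenating these three equivalences yields the desired $\mathcal{C}^K \simeq \mathcal{C}^{K'}$.

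For the naturality claim in $\mathcal{C}$, I would observe that the construction $\mathcal{C}\mapsto \Sp^K\otimes\mathcal{C}$ is a functor $\Prcat_\mathsf{st}\to\Prcat_\mathsf{st}$, and the equivalence of \Cref{prop:univ-eq} is itself natural in $\mathcal{C}$ (this is clear from the chain of canonical equivalences given in its proof, each of which is built from universal properties). Hence $\varphi\otimes\mathrm{id}_{(-)}$ defines a natural equivalence of functors $(-)^K\simeq (-)^{K'}: \Prcat_\mathsf{st}\to\Prcat_\mathsf{st}$.

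There is no real obstacle here; the only subtlety worth highlighting is that the equivalence $\Sp^K\simeq\Sp^{K'}$ must be interpreted as a morphism in $\Prcat_\mathsf{st}$, i.e.\ given by a colimit preserving functor, in order to tensor with $\mathcal{C}$. This is automatic since any equivalence of presentable $\infty$-categories preserves all colimits.
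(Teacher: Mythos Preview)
Your proof is correct and is exactly the intended argument: the paper states the corollary without proof, treating it as immediate from \Cref{prop:univ-eq}, and your derivation via $\mathcal{C}^K \simeq \Sp^K \otimes \mathcal{C}$ together with functoriality of $\otimes$ in $\Prcat_\mathsf{st}$ is precisely what is implicit there. Your additional remarks on naturality and on why an equivalence is automatically colimit preserving are accurate and simply make explicit what the paper leaves to the reader.
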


When restricting to finite acyclic quivers\footnote{This also works for finite directed categories, as mentioned in the proof of \Cref{prop:univ-eq-small}.}, there is a \enquote{small} version of the previous results:

\begin{prop} \label{prop:univ-eq-small}
Let $Q$ be a finite acyclic quiver and $\mathcal{C}$ any stable $\infty$-category. Then there is a canonical equivalence $\mathcal{C}^Q \simeq \Fun^\mathsf{ex}((\mathsf{fSp}^Q)^\mathrm{op},\mathcal{C})$. 
\end{prop}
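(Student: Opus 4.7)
The plan is to reduce to the presentable case already handled by \Cref{prop:univ-eq} and then descend to arbitrary stable $\mathcal{C}$ via the Yoneda embedding into $\mathsf{Ind}(\mathcal{C})$. The key preliminary observation is that, since $Q$ is finite and acyclic, the same argument as in \Cref{prop:derived-infty-cat} (using \cite[Proposition 2.8]{Aok23}) shows that the compact objects of $\Sp^Q$ coincide with $\fSp^Q$, so by compact generation one has $\Sp^Q \simeq \mathsf{Ind}(\fSp^Q)$.

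For a presentable stable $\mathcal{C}$, I would chain the following equivalences:
\begin{align*}
\mathcal{C}^Q &\simeq \Sp^Q \otimes \mathcal{C} \simeq \Fun^\mathsf{R}((\Sp^Q)^\mathrm{op}, \mathcal{C}) \\
&\simeq \Fun^\mathsf{L}(\Sp^Q, \mathcal{C}^\mathrm{op})^\mathrm{op} \simeq \Fun^\mathsf{ex}(\fSp^Q, \mathcal{C}^\mathrm{op})^\mathrm{op} \simeq \Fun^\mathsf{ex}((\fSp^Q)^\mathrm{op}, \mathcal{C}),
\end{align*}
using, in order, \Cref{prop:univ-eq}, \cite[Proposition 4.8.1.17]{Lur17}, the duality $\Fun^\mathsf{R}(\mathcal{A}^\mathrm{op}, \mathcal{B}) \simeq \Fun^\mathsf{L}(\mathcal{A}, \mathcal{B}^\mathrm{op})^\mathrm{op}$ (valid since $\mathcal{C}^\mathrm{op}$ is cocomplete stable when $\mathcal{C}$ is presentable stable), the universal property of $\mathsf{Ind}$-completion for small stable $\infty$-categories \cite[Proposition 5.3.5.10]{Lur09}, and a final $\mathrm{op}$-swap. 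All equivalences are natural in $\mathcal{C}$. To descend to an arbitrary stable $\mathcal{C}$, I would use that $\iota: \mathcal{C} \hookrightarrow \mathsf{Ind}(\mathcal{C})$ is fully faithful and exact with $\mathsf{Ind}(\mathcal{C})$ presentable stable; applying the presentable case to $\mathsf{Ind}(\mathcal{C})$ yields a natural equivalence $\Phi: \mathsf{Ind}(\mathcal{C})^Q \isoarrow \Fun^\mathsf{ex}((\fSp^Q)^\mathrm{op}, \mathsf{Ind}(\mathcal{C}))$, and it suffices to check that $\Phi$ restricts to an equivalence $\mathcal{C}^Q \isoarrow \Fun^\mathsf{ex}((\fSp^Q)^\mathrm{op}, \mathcal{C})$ between the corresponding fully faithful full subcategories.

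The main obstacle will be this final restriction step. Unwinding the equivalences, $\Phi^{-1}$ is restriction along the Yoneda-type map $j^\mathrm{op}: Q \hookrightarrow (\fSp^Q)^\mathrm{op}$, where $j(v) = \Sigma^\infty_+ \mathsf{Map}_Q(v,-)$ lands in $\fSp^Q$ because $Q$ is finite and acyclic. So the restriction question reduces to showing that an exact functor $F:(\fSp^Q)^\mathrm{op} \to \mathsf{Ind}(\mathcal{C})$ factors through $\mathcal{C}$ if and only if $F \circ j^\mathrm{op}$ does. The forward direction is immediate. The converse relies on the fact that the representables $\{j(v)\}_{v \in Q_0}$ generate $\fSp^Q$ as a stable $\infty$-category (i.e., under finite (co)limits, plus retracts if necessary), combined with the closure of $\mathcal{C}$ under the same operations inside $\mathsf{Ind}(\mathcal{C})$; exactness of $F$ then propagates $\mathcal{C}$-valuedness from $j^\mathrm{op}(Q)$ to all of $(\fSp^Q)^\mathrm{op}$.
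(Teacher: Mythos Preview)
Your approach is genuinely different from the paper's and mostly works, but there is one real gap in the descent step.

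The paper bypasses the presentable case entirely: it invokes directly the universal property of the \emph{free stable $\infty$-category} on $Q^\mathrm{op}$, namely that composition with the spectral Yoneda map $Q^\mathrm{op} \to \Sp^{Q,\mathrm{fin}}$ induces $\Fun^\mathsf{ex}(\Sp^{Q,\mathrm{fin}},\mathcal{C}) \simeq \Fun(Q^\mathrm{op},\mathcal{C})$ for \emph{any} stable $\mathcal{C}$ (\cite[Proposition 2.2.7]{Lur15}), and then identifies $\Sp^{Q,\mathrm{fin}} = \fSp^Q$ using \cite[Proposition 2.2.6]{Lur15} and \cite[Proposition 2.8]{Aok23}. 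Here $\Sp^{Q,\mathrm{fin}}$ is the smallest \emph{stable} (not thick) subcategory of $\Sp^Q$ containing the representables. Your route via \Cref{prop:univ-eq} and $\mathsf{Ind}$-descent is longer but conceptually fine for the presentable part.

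The gap is in your last paragraph. You write that the representables generate $\fSp^Q$ ``under finite (co)limits, \emph{plus retracts if necessary}'', and then claim $\mathcal{C}$ is closed under ``the same operations'' in $\mathsf{Ind}(\mathcal{C})$. But $\mathcal{C}$ is \emph{not} in general closed under retracts in $\mathsf{Ind}(\mathcal{C})$: the retract-closure of $\mathcal{C}$ there is the idempotent completion $\mathcal{C}^\natural \simeq \mathsf{Ind}(\mathcal{C})^c$, which is strictly larger when $\mathcal{C}$ is not idempotent-complete. So if retracts were genuinely needed, your argument would only prove the statement for idempotent-complete $\mathcal{C}$. The fix is exactly the identification the paper makes: for $Q$ finite acyclic one has $\Sp^{Q,\mathrm{fin}} = \fSp^Q$, i.e.\ the representables already generate $\fSp^Q$ as a stable subcategory \emph{without} retracts. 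Your citation of \cite[Proposition 2.8]{Aok23} alone gives $(\Sp^Q)^c = \fSp^Q$, which is the thick closure; you also need the input from \cite{Lur15} to drop the retracts. Once you add that, your argument goes through.
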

\begin{proof}
Let $\Sp^{Q,\, \mathrm{fin}}$ denote the smallest stable subcategory of $\Sp^Q$ containing the image of the spectral Yoneda functor $Q^\mathrm{op} \to \Sp^Q$ (i.e. Yoneda composed with $\Sigma^\infty$). This has the following universal property: composition with $Q^\mathrm{op} \to \Sp^{Q,\, \mathrm{fin}}$ induces an equivalence $\Fun^\mathsf{ex}(\Sp^{Q,\, \mathrm{fin}},\mathcal{C}) \isoarrow \Fun(Q^\mathrm{op},\mathcal{C})$, see \cite[Proposition 2.2.7]{Lur15}. But for $Q$ at least a finite directed category, and in particular for finite acyclic quivers, it follows that $\Sp^{Q,\, \mathrm{fin}} = \fSp^Q = (\Sp^Q)^c$ (see the proof of \cite[Proposition 2.2.6]{Lur15} and \cite[Proposition 2.8]{Aok23}).
Hence one has the following canonical equivalences:
\begin{equation*}
\Fun(Q,\mathcal{C}) \simeq \Fun(Q^\mathrm{op},\mathcal{C}^\mathrm{op})^\mathrm{op}
\simeq \Fun^\mathsf{ex}(\fSp^Q,\mathcal{C}^\mathrm{op})^\mathrm{op}
\simeq \Fun^\mathsf{ex}((\fSp^Q)^\mathrm{op},\mathcal{C}) \qedhere
\end{equation*}
\end{proof}

\begin{coro}\label{coro:sp-equiv-quivers}
Let $Q$ and $Q'$ be finite acyclic quivers. Any equivalence $\Sp^Q \simeq \Sp^{Q'}$ induces natural equivalences $\mathcal{C}^Q\simeq \mathcal{C}^{Q'}$ for all stable $\infty$-categories $\mathcal{C}$.
\end{coro}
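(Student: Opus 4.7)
The plan is to deduce this directly from Proposition \ref{prop:univ-eq-small}, which rewrites $\mathcal{C}^Q$ purely in terms of the $\infty$-category $\fSp^Q$ of finite/compact spectral representations. By that result we have canonical equivalences
\[
\mathcal{C}^Q \simeq \Fun^\mathsf{ex}((\fSp^Q)^\mathrm{op},\mathcal{C}) \quad\text{and}\quad \mathcal{C}^{Q'} \simeq \Fun^\mathsf{ex}((\fSp^{Q'})^\mathrm{op},\mathcal{C}),
\]
so it suffices to produce a natural equivalence $(\fSp^Q)^\mathrm{op} \simeq (\fSp^{Q'})^\mathrm{op}$ from any given equivalence $F\colon \Sp^Q \xrightarrow{\simeq} \Sp^{Q'}$.

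The key observation, already noted in the proof of Proposition \ref{prop:univ-eq-small}, is that $\fSp^Q$ identifies with the subcategory of compact objects of $\Sp^Q$, and likewise for $Q'$. Being a compact object is an intrinsic property defined purely in terms of filtered colimits and mapping spaces, and any equivalence of $\infty$-categories preserves and reflects both, so $F$ must restrict to an equivalence on compact objects
\[
F^c \colon \fSp^Q \xrightarrow{\ \simeq \ } \fSp^{Q'}.
\]
Passing to opposite $\infty$-categories and composing with the restriction functor yields an equivalence
\[
\Fun^\mathsf{ex}((\fSp^{Q'})^\mathrm{op},\mathcal{C}) \xrightarrow{\ \simeq \ } \Fun^\mathsf{ex}((\fSp^Q)^\mathrm{op},\mathcal{C}),
\]
natural in any stable $\infty$-category $\mathcal{C}$, since $((F^c)^\mathrm{op})^*$ is evidently an equivalence with inverse induced by a homotopy inverse of $F$. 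Stringing this together with the two canonical equivalences above gives the desired equivalence $\mathcal{C}^Q \simeq \mathcal{C}^{Q'}$, natural in $\mathcal{C}$.

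There is no real obstacle here; the only point deserving a line of justification is that an equivalence of presentable stable $\infty$-categories automatically restricts to an equivalence of compact objects, which is immediate from the intrinsic definition of compactness. The proof is therefore essentially a two-line reduction to Proposition \ref{prop:univ-eq-small} together with this compact-preservation observation.
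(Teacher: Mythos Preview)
Your proof is correct and follows essentially the same approach as the paper: pass from the equivalence $\Sp^Q \simeq \Sp^{Q'}$ to an equivalence on compact objects $\fSp^Q \simeq \fSp^{Q'}$, then invoke \Cref{prop:univ-eq-small}. The paper's proof is a one-liner that just states this reduction, citing \cite[Proposition 2.8]{Aok23} for the identification $(\Sp^Q)^c = \fSp^Q$, while you spell out explicitly why equivalences preserve compactness.
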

\begin{proof}
Simply note that equivalences $\Sp^Q \simeq \Sp^{Q'}$ correspond to equivalences of their compacts $(\Sp^Q)^c = \fSp^Q$, for $Q$ finite acyclic (see \cite[Proposition 2.8]{Aok23}).
\end{proof}

\subsection{Stably equivalent shapes} 

As we already mentioned, one of our main motivations comes from tilting theory and its generalizations. We say that two quivers $Q$ and $Q'$ are \emph{derived equivalent} over a field $k$ if there is an exact equivalence
$$\mathsf{D}^b(kQ) \stackrel{\triangle}{\simeq} \mathsf{D}^b(kQ')$$
between their derived categories. When such an equivalence is of combinatorial nature, it can often lead to derived equivalences of representations over arbitrary abelian categories ---these are called \emph{universal derived equivalences} by Ladkani \cite{Lad07,Lad08}. In the same spirit, Groth and Stovicek \cite{GroSto18b} define stably equivalent shapes in the setting of abstract representation theory.

\begin{defi} \label{def:stable-eq}
Let $K$ and $K'$ be two small $\infty$-categories. A \emph{stable equivalence} between them is a natural equivalence of (ordinary) functors $$\varphi: (-)^K \simeq (-)^{K'}: h\infCAT^\mathsf{ex} \to h\infCAT.$$ 
This means a collection of equivalences $\varphi_\mathcal{C}: \mathcal{C}^K \isoarrow \mathcal{C}^{K'}$, one for each stable $\infty$-category $\mathcal{C}$, such that for each exact functor $f:\mathcal{C}\to\mathcal{D}$, the square
$$\begin{tikzcd} \mathcal{C}^K \ar[r,"\varphi_\mathcal{C}"] \ar[d,"f_*"'] & \mathcal{C}^{K'} \ar[d,"f_*"] \\
\mathcal{D}^K \ar[r,"\varphi_\mathcal{D}"] & \mathcal{D}^{K'}
\end{tikzcd}$$
homotopy commutes. When it exists, we say $K$ and $K'$ are \emph{stably equivalent}.
\end{defi}

\begin{example}
For finite acyclic quivers $Q$ and $Q'$, any equivalence $\Sp^Q \simeq \Sp^{Q'}$ induces a stable equivalence between the two shapes (\Cref{coro:sp-equiv-quivers}).
\end{example}

The interest in finding stably equivalent shapes is double. First and most obvious, they vastly generalize classical derived equivalences, and could provide more conceptual explanations of those. Second, any such equivalence exhibits a potentially interesting \emph{symmetry in abstract stable homotopy theory}. For example, the reader might appreciate a new perspective on May's axioms for $\otimes$-triangulated categories in terms of an equivalence between the quiver $D_4$ and the commutative square \cite[sec. 10]{GroSto18b}, or an application of the abstract representation theory of Dynkin quivers of type $A_n$ to the construction of higher triangulations \cite[sec. 13]{GroSto16}. 

\begin{rema}
One could also ask for a stronger version of \Cref{def:stable-eq} by requiring an equivalence of functors $(-)^K \simeq (-)^{K'}: \infCAT^\mathsf{ex} \to \infCAT$, which means requiring all higher naturality conditions. However, it is often too difficult to prove such naturality when constructing new stable equivalences, and also not so common to use it. Moreover, in practice, one can build an equivalence over spectra and extend it to all other stable $\infty$-categories using \Cref{coro:sp-equiv-quivers}, which gives all higher naturality.
\end{rema}

For finite acyclic quivers we can already give a complete answer to the questions of when two of them are stably equivalent; this is thanks to Happel's foundational work \cite{Hap88} together with the recent construction of abstract BGP reflection functors by Dyckerhoff, Jasso and Walde \cite{DycJasWal21} (c.f. also \cref{sec:reflections}). Here we denote $\mathbb{Z}Q$ the so-called \emph{repetitive quiver} of $Q$ (see a definition in \cref{sec:equiv-repet}).

\begin{theo} \label{theo:equiv-quivers}
For $Q$ and $Q'$ finite acyclic quivers, the following are equivalent:
\begin{enumerate}
    \item $Q$ and $Q'$ are related by a sequence of source or sink reflections.
    \item $\mathbb{Z}Q \cong \mathbb{Z}Q'$ as translation quivers.
    \item $Q$ and $Q'$ are derived equivalent over a field $k$.
    \item $Q$ and $Q'$ are equivalent over spectra, i.e. $\Sp^Q \simeq \Sp^{Q'}$.
    \item $Q$ and $Q'$ are stably equivalent.
\end{enumerate}
\end{theo}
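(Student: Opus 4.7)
The plan is to verify the five conditions are equivalent by a circle of implications, leveraging the abstract reflection functors developed later in the paper together with classical results of Happel.

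First I would establish the implication $(1) \Rightarrow (5)$ using the abstract BGP reflection functors. A single source or sink reflection at a vertex $v$ of $Q$ produces a new quiver $Q'$, and the construction of \cref{sec:reflections} (or \cite{DycJasWal21}) provides a natural equivalence $\mathcal{C}^Q \simeq \mathcal{C}^{Q'}$ for every stable $\infty$-category $\mathcal{C}$, which is furthermore natural in exact functors $\mathcal{C} \to \mathcal{D}$ because the construction is given by a homotopy Kan extension along a gluing diagram that commutes with such exact functors. Composing a sequence of such reflections yields the desired stable equivalence.

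Next, $(5) \Rightarrow (4)$ is immediate by definition, since $\Sp$ is a particular stable $\infty$-category. The converse $(4) \Rightarrow (5)$ is exactly the content of \Cref{coro:sp-equiv-quivers}. Similarly, $(5) \Rightarrow (3)$ follows by specializing the stable equivalence to $\mathcal{C} = \D{k}$ and then applying \Cref{prop:derived-infty-cat} to identify $\D{k}^Q \simeq \D{kQ}$ (and restricting to bounded/perfect parts, which is preserved since the equivalences are induced by sequences of reflection functors that preserve compactness); the induced equivalence of homotopy categories is then an exact equivalence of triangulated categories.

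It remains to close the circle with $(3) \Rightarrow (2)$ and $(2) \Rightarrow (1)$. For $(3) \Rightarrow (2)$, I would invoke Happel's theorem \cite{Hap87,Hap88}: the connected component of the Auslander–Reiten quiver of $\mathsf{D}^b(kQ)$ containing the indecomposable projectives is isomorphic to $\mathbb{Z}Q$ (when $Q$ is not Dynkin this is the whole AR quiver, and when Dynkin the AR quiver is $\mathbb{Z}Q / \langle \tau^n \rangle$ but $\mathbb{Z}Q$ is still recovered as its universal cover), so any exact equivalence $\mathsf{D}^b(kQ) \simeq \mathsf{D}^b(kQ')$ identifies these AR-quivers and hence yields an isomorphism $\mathbb{Z}Q \cong \mathbb{Z}Q'$ of translation quivers. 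Finally, $(2) \Rightarrow (1)$ is the classical combinatorial fact (see e.g.\ \cite{Hap88}) that a finite acyclic quiver $Q$ is recovered from $\mathbb{Z}Q$ as a slice, any two slices are related by a sequence of combinatorial source/sink reflections in $\mathbb{Z}Q$, and these correspond precisely to BGP reflections at the level of $Q$ and $Q'$.

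The main obstacle is the $(3) \Rightarrow (2)$ step: it is the only one that is not formal and genuinely relies on Happel's representation-theoretic input identifying the shape of the AR quiver; everything else is either a formal $\infty$-categorical reduction (via \Cref{coro:sp-equiv-quivers}) or a combinatorial manipulation of translation quivers.
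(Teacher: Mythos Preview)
Your proposal is essentially correct and follows the same strategy as the paper: Happel for $(2)\Leftrightarrow(1)$ and $(3)\Rightarrow(2)$, abstract reflection functors for $(1)\Rightarrow(5)$, and specialization to a field to recover $(3)$. Two minor slips worth fixing: in the Dynkin case the Auslander--Reiten quiver of $\mathsf{D}^b(kQ)$ is already $\mathbb{Z}Q$ itself (not a quotient; you may be thinking of $\Gamma(\mathsf{mod}\,kQ)$), and your justification that the equivalence in $(5)\Rightarrow(3)$ preserves compacts ``because it comes from reflection functors'' is circular at that point in the cycle --- instead just specialize $(5)$ to the stable $\infty$-category $\mathcal{C}=\mathcal{D}^\mathsf{per}(k)=\Dd{b}{k}$ and apply \Cref{prop:derived-infty-cat} directly, or do as the paper does and go through $(4)\Rightarrow(3)$ via \Cref{coro:sp-equiv-quivers}.
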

\begin{proof}
The equivalence between (1), (2) and (3) is known since Happel: (1) $\Leftrightarrow$ (2) is \cite[Lemma 5.7]{Hap88}, (3) $\Rightarrow$ (2) is \cite[Corollary 5.7]{Hap88}, and (1) $\Rightarrow$ (3) holds since reflection functors are represented by tilting modules. The implication (5) $\Rightarrow$ (4) is clear, and for (4) $\Rightarrow$ (3) we use \Cref{coro:sp-equiv-quivers} with $\mathcal{C} = \Dd{b}{kQ}$ and take homotopy categories. Finally, (1) $\Rightarrow$ (5) follows by the construction of abstract reflection functors in \cite[Corollary 2.6]{DycJasWal21} (c.f. also \cref{sec:reflections} for an alternative proof).
\end{proof}

\begin{coro}
Two oriented trees are stably equivalent if and only if they have the same underlying (unoriented) graph.
\end{coro}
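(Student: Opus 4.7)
The plan is to deduce this corollary directly from \Cref{theo:equiv-quivers} by matching its condition (1) with the combinatorial invariant of the underlying unoriented graph, using the classical observation that for oriented trees the orbit under BGP reflections coincides with the set of orientations of a fixed graph.

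First I would handle the easy implication: if $Q$ and $Q'$ are stably equivalent oriented trees, then by \Cref{theo:equiv-quivers}~(1) they are related by a finite sequence of source or sink reflections. Since a reflection at a source or sink of a quiver only reverses the orientations of the arrows incident to that vertex, it preserves the underlying unoriented graph. Hence $Q$ and $Q'$ share the same underlying graph.

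For the converse, suppose $Q$ and $Q'$ are oriented trees with the same underlying (unoriented) graph $T$. I would show by induction on the number $d(Q,Q')$ of edges of $T$ whose orientations in $Q$ and $Q'$ disagree that $Q$ and $Q'$ can be connected by a finite sequence of source/sink reflections. The base case $d=0$ is trivial. For the inductive step, pick an edge $e$ of $T$ whose orientations differ; removing $e$ disconnects $T$ into two subtrees, and I can pick the endpoint $v$ of $e$ lying in the smaller component. On that finite subtree rooted at $v$, I can apply a sequence of source (or sink) reflections at the vertices of that subtree other than $v$ in a leaf-to-root order, making $v$ eventually a source (or a sink) of the whole quiver obtained so far. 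Reflecting at $v$ then flips the orientation of $e$ without affecting any other disagreement; then I undo the auxiliary reflections in reverse. This produces a sequence of source/sink reflections taking $Q$ to a quiver $Q''$ with $d(Q'',Q')<d(Q,Q')$, and induction concludes. By \Cref{theo:equiv-quivers}, $Q$ and $Q'$ are therefore stably equivalent.

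The main obstacle is the second direction: the combinatorial lemma that on a tree any two orientations are connected by BGP reflections. The cycle-free hypothesis is essential here, since on a quiver with cycles one cannot in general reach a source/sink at a prescribed vertex through reflections at other vertices; on a tree, acyclicity of every subtree ensures the inductive reduction above terminates. Once this lemma is in place, the corollary follows immediately from the equivalence (1)$\Leftrightarrow$(5) in \Cref{theo:equiv-quivers}.
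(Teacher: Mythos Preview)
Your overall strategy is exactly right: reduce to \Cref{theo:equiv-quivers} by showing that two orientations of a tree are related by source/sink reflections if and only if they share the same underlying graph. The forward implication is fine. The issue is in your inductive proof of the combinatorial lemma for the converse.

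The step ``reflecting at $v$ then flips the orientation of $e$ without affecting any other disagreement; then I undo the auxiliary reflections in reverse'' does not do what you claim. Reflecting at $v$ flips \emph{all} edges incident to $v$, not just $e$, and the reverse auxiliary reflections do not restore the subtree $T_v$ to its original state. Concretely, take $T$ the path $1\text{--}2\text{--}3\text{--}4\text{--}5$, $Q=(1\to2\to3\to4\to5)$ and $Q'=(1\to2\to3\leftarrow4\to5)$, so $d(Q,Q')=1$ with $e=\{3,4\}$. Your procedure picks $v=4$, reflects at $5$ (auxiliary), then at $4$, then undoes at $5$; the result is $1\to2\to3\leftarrow4\leftarrow5$, which still satisfies $d(-,Q')=1$. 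So the induction does not progress.

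The lemma itself is true and classical; two clean routes are available. Either prove directly that for a tree $Q$ the translation quiver $\mathbb{Z}Q$ depends only on the underlying graph (define a height function $h:Q_0\to\mathbb{Z}$ along the unique paths of $T$ measuring the orientation discrepancy, and check that $(n,v)\mapsto(n+h(v),v)$ gives an isomorphism $\mathbb{Z}Q\cong\mathbb{Z}Q'$), and then invoke condition~(2) of \Cref{theo:equiv-quivers}. Or, if you prefer condition~(1), run the induction on the number of vertices by removing a \emph{leaf}: after possibly one reflection at the leaf to align its unique edge, delete it and apply the inductive hypothesis, handling the one subtlety that a reflection at the leaf's neighbour in the smaller tree may require an extra reflection at the leaf when lifted. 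Either fix closes the gap.
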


Given \Cref{theo:equiv-quivers}, we are moved to the problem of trying to classify all stable equivalences between quivers. 
In this line, one would like to compute the groups of autoequivalences\footnote{If $Q$ and $Q'$ are stably equivalent and we fix $\varphi:(-)^Q\simeq(-)^{Q'}$ (e.g. reflection functors), then composition with $\varphi$ identifies equivalences $(-)^Q \simeq (-)^{Q'}$ and autoequivalences of $(-)^Q$.} (up to natural equivalence) of abstract representations $\mathcal{C}^Q$, and particularly the group of autoequivalences of $\Sp^Q$. This is the so-called \emph{spectral Picard group} of $Q$, that we understand as a group of \emph{universal symmetries} in stable homotopy theory (c.f. \Cref{coro:sp-equiv-quivers}). Inspired by work of Happel (see \Cref{theo:equiv-quivers}(2), for instance) and of Miyachi and Yekutieli \cite{MiyYek01}, we look in the next few sections for interesting autoequivalences in the shape of the repetitive quiver and more generally of the Auslander-Reiten quiver. Important applications to the computation of spectral Picard groups are given in \cref{sec:picard}.

\section{Reflection functors} \label{sec:reflections}

In this section we give a new construction of abstract BGP reflection functors, that is, an adaptation of Bernstein, Gel'fand and Ponomarev's reflection functors \cite{BerGelPon73} in the context of representations over arbitrary stable $\infty$-categories.

\subsection{Classical BGP reflection functors} \label{subsec:classicBGP}
Let $Q$ be a (finite) quiver and $v \in Q_0$ any vertex. There is a \emph{reflected quiver} $\sigma_vQ$ which is obtained from $Q$ by changing the orientation of every arrow adjacent to $v$. This construction is especially relevant when $v$ is a \emph{source} (only has outgoing arrows) or a \emph{sink} (only has incoming arrows), and in that situation, BGP reflection functors relate the representation theory of $Q$ with that of $\sigma_vQ$. They were originally conceived to simplify Gabriel's proof of the classification of representation-finite hereditary algebras \cite{BerGelPon73}.

Let us explain the classical construction when $v \in Q_0$ is a source. Write $Q' = \sigma_v Q$ and consider the categories of representations $\mathsf{rep}_k\, Q$ and $\mathsf{rep}_k\, Q'$ over a field $k$. The reflection of a representation $M: Q \to \mathsf{mod}\, k$ at $v$ consists of a representation $S^-_vM: Q' \to \mathsf{mod}\, k$. It is given on vertices by $(S^-_vM)_u = M_u$ if $u \neq v$ and 
$$(S^-_vM)_v = \mathrm{coker}\left(M_v \to \textstyle\bigoplus\limits_{v \to w} M_w\right).$$
For an arrow $\alpha: u \to u'$ in $Q$, we let $(S^-_vM)_\alpha = M_\alpha$ if $u \neq v$, and if $u = v$, we define $(S^-_vM)_\alpha$ as the composition
$$M_{u'} \xrightarrow{\mathrm{inc}} \textstyle\bigoplus\limits_{v \to w} M_w \xrightarrow{\ \ } (S^-_vM)_v,$$
where the second arrow is the canonical map to the cokernel. Using the universal property of the cokernel, this easily builds into a functor
$$S_v^-: \mathsf{rep}_k\, Q \xrightarrow{\ \ } \mathsf{rep}_k\, Q'.$$
Moreover, there is a dual construction for the sink case, which applied to $v \in Q'_0$ provides another functor
$$S_v^+: \mathsf{rep}_k\, Q' \xrightarrow{\ \ } \mathsf{rep}_k\, Q.$$
It can be verified that $S_v^-$ is left adjoint to $S_v^+$.

The adjunction $S_v^-: \mathsf{rep}_k\, Q \rightleftarrows \mathsf{rep}_k\, Q' :S_v^+$ closely relates the representation theories of $Q$ and its reflected quiver $Q'$. For example, it induces a bijection between iso-classes of indecomposable representations of $Q$ and $Q'$, except for the simple representation $S(v)$ which is annihilated by both functors (see e.g. \cite{Kra08}). The adjunction $S_v^- \dashv S_v^+$ is \emph{never} an equivalence of categories. However, it follows from Happel's results \cite{Hap88} that it induces an equivalence of derived categories
$$\begin{tikzcd} \mathbb{L}S_v^-: \mathsf{D}^b(kQ) \ar[r,shift left=2] \ar[r,phantom, "\scriptstyle\simeq"] & \mathsf{D}^b(kQ') :\mathbb{R}S_v^+. \ar[l,shift left=2] \end{tikzcd}$$
Moreover, later on Ladkani \cite{Lad07} proved that the same equivalence exists for arbitrary abelian categories $\mathsf{D}(\mathcal{A}^Q) \simeq \mathsf{D}(\mathcal{A}^{Q'})$. The reason for this phenomenon is explained in the following sections. Namely, while kernel and cokernel do not form equivalences of abelian categories, fiber and cofiber do form them between stable $\infty$-categories (c.f. \ref{subsec:stable}).

\subsection{The $n$-source and the $n$-sink} \label{subsec:source-sink}
One on the key ingredients in the construction of abstract reflection functors is a way to turn a source (resp. sink) diagram into a single morphism to (resp. from) a product (resp. coproduct).

Consider the small $\infty$-categories given by a vertex with $n$ outgoing arrows, the $n$-source $\Src$, and a vertex with $n$ incoming arrows, the $n$-sink $\Snk$. These can be defined by the following pushouts:
\begin{equation} \label{eq:src-snk}
\begin{tikzcd} 
\coprod\limits_{i=1}^n \Delta^0 \arrow[r,"\coprod 0"] \arrow[d] \ar[rd,phantom, "\PO", pos=0.4] & \coprod\limits_{i=1}^n \Delta^1 \arrow[d] \\[-0.5em] \Delta^0 \arrow[r] & \Src 
\end{tikzcd} \quad\quad\quad\quad 
\begin{tikzcd} 
\coprod\limits_{i=1}^n \Delta^0 \arrow[r,"\coprod 1"] \arrow[d] \ar[rd,phantom, "\PO", pos=0.4] & \coprod\limits_{i=1}^n \Delta^1 \arrow[d] \\[-0.5em] \Delta^0 \arrow[r] & \Snk
\end{tikzcd}
\end{equation}

\begin{prop} \label{lemm:src-snk}
Let $\mathcal{C}$ be an $\infty$-category with finite products (resp. coproducts). Then there is the left (resp. right) pullback in $\infCAT:$
\begin{equation} \label{eq:src-hoPB}
\begin{tikzcd} \mathcal{C}^\Src \arrow[r] \arrow[d] \ar[rd,phantom, "\hoPB"] & \mathcal{C}^{\Delta^1} \arrow[d,"1^*"] \\[.5em] \mathcal{C}^n \arrow[r,"\prod"'] & \mathcal{C} \end{tikzcd} \quad\quad\quad\quad\quad \begin{tikzcd} \mathcal{C}^\Snk \arrow[r] \arrow[d] \ar[rd,phantom, "\hoPB"] & \mathcal{C}^{\Delta^1} \arrow[d,"0^*"] \\[.5em] \mathcal{C}^n \arrow[r,"\coprod"'] & \mathcal{C} \end{tikzcd}
\end{equation}
\end{prop}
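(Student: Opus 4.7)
The plan is to prove the source version; the sink version follows dually, by applying the source statement to $\op{\mathcal{C}}$ (which admits finite products when $\mathcal{C}$ admits finite coproducts) and using $\op{(\Src)} \cong \Snk$.

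First, applying $\Fun(-,\mathcal{C})$ to the defining pushout \eqref{eq:src-snk} of $\Src$ sends the colimit to a limit in $\sSet$, yielding a pullback of simplicial sets
\begin{equation*}
\mathcal{C}^\Src \;\cong\; \mathcal{C} \times_{\mathcal{C}^n} (\mathcal{C}^{\Delta^1})^n,
\end{equation*}
where $\mathcal{C} \to \mathcal{C}^n$ is the diagonal $\Delta$ and $(\mathcal{C}^{\Delta^1})^n \to \mathcal{C}^n$ is source-evaluation $(0^*)^n$. By \Cref{prop:fib-mono}, $(0^*)^n$ is a Joyal fibration (as $\Fun$ of the monomorphism $\coprod \Delta^0 \hookrightarrow \coprod \Delta^1$), so by \Cref{prop:hoPB} the square is a pullback in $\infCAT$ as well. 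Via the exponential isomorphism $(\mathcal{C}^{\Delta^1})^n \cong (\mathcal{C}^n)^{\Delta^1}$, this rewrites as $\mathcal{C} \times_{\mathcal{C}^n} (\mathcal{C}^n)^{\Delta^1}$, i.e., the $\infty$-category of arrows $\Delta(a) \to \vec{x}$ in $\mathcal{C}^n$ with $a \in \mathcal{C}$ and $\vec{x} \in \mathcal{C}^n$ free.

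The final step invokes the adjunction $\Delta \colon \mathcal{C} \rightleftarrows \mathcal{C}^n \colon \prod$, which exists precisely by the hypothesis that $\mathcal{C}$ has finite products. This adjunction induces a canonical equivalence between the corresponding arrow pullbacks: arrows $\Delta(a) \to \vec{x}$ in $\mathcal{C}^n$ correspond naturally to arrows $a \to \prod \vec{x}$ in $\mathcal{C}$, and the $\infty$-category of the latter is precisely the homotopy pullback $\mathcal{C}^{\Delta^1} \times_\mathcal{C} \mathcal{C}^n$ along $1^*$ and $\prod$, yielding the desired square.

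The main technical point is this last step: turning the adjunction $\Delta \dashv \prod$ into an equivalence of arrow pullbacks in $\infCAT$. This ultimately reduces to the characterization of adjunctions via a natural equivalence of mapping spaces $\map{\mathcal{C}^n}(\Delta(a),\vec{x}) \simeq \map{\mathcal{C}}(a, \prod \vec{x})$, which is the universal property of the product itself.
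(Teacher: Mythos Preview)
Your proposal is correct and follows the same approach as the paper: recognize $\mathcal{C}^\Src$ as the arrow pullback $\mathcal{L}_*(\Delta)$ via the defining pushout, then use the adjunction $\Delta \dashv \prod$ to identify it with $\mathcal{L}^*(\prod)$. The only difference is that the paper isolates your final step as \Cref{lemm:gluing-adjunction} and proves it by identifying both $\mathcal{L}_*(f)$ and $\mathcal{L}^*(g)$ with the $\infty$-category of sections of the bicartesian fibration over $\Delta^1$ classified by $f \dashv g$ (via \cite[Lemma 5.4.7.15]{Lur09}), which makes the coherence in your last paragraph precise.
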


\begin{rema}
This result tells us that (up to equivalence) a diagram of the form
$$\begin{tikzcd}[column sep=small, row sep=small]
& y_1 \ar[dd,phantom, "\scriptstyle{\vdots}",pos=0.35] \\[-0.3em]
x \ar[ur] \ar[dr] & \\[-0.3em]
& y_n
\end{tikzcd} \quad\quad \text{can be regarded as a pair} \quad\quad \left(\, x \to \textstyle\prod\limits_{i=1}^n y_i\, ,\, (y_i)_{i=1}^n \, \right),$$
and similarly for a sink diagram and a morphism from the coproduct.
\end{rema}

The above \Cref{lemm:src-snk} is a consequence of a much more general construction:

\begin{cons}[Gluing along a functor]
For any functor $f: \mathcal{C} \to \mathcal{D}$, we define two $\infty$-categories $\mathcal{L}_*(f)$ and $\mathcal{L}^*(f)$ by the following (homotopy) pullbacks:
\begin{equation} \label{eq:gluing}
\begin{tikzcd}
\mathcal{L}_*(f) \rar\dar \ar[rd,phantom, "\PBho", pos=0.45] & \Fun(\Delta^1,\mathcal{D}) \ar[d,"0^*"] \\[.5em]
\mathcal{C} \ar[r,"f"'] & \mathcal{D}
\end{tikzcd} \quad\quad\quad
\begin{tikzcd}
\mathcal{L}^*(f) \rar\dar \ar[rd,phantom, "\PBho", pos=0.45] & \Fun(\Delta^1,\mathcal{D}) \ar[d,"1^*"] \\[.5em]
\mathcal{C} \ar[r,"f"'] & \mathcal{D}
\end{tikzcd}
\end{equation}
Objects of $\mathcal{L}_*(f)$ are given by pairs $(c,f(c)\to d)$ where $c\in\mathcal{C}$ and $f(c)\to d$ is a morphism in $\mathcal{D}$. Similarly, objects of $\mathcal{L}^*(f)$ are given by pairs $(c,d \to f(c))$ where $c\in\mathcal{C}$ and $d \to f(c)$ is a morphism in $\mathcal{D}$.
Because the right vertical map is a Joyal fibration and all objects are $\infty$-categories, these are homotopy pullbacks in the Joyal model structure, and hence pullbacks in $\infCAT$ (c.f. \ref{subsec:joyal}).
\end{cons}

\begin{lemm} \label{lemm:gluing-adjunction}
Let $f: \mathcal{C} \leftrightarrows \mathcal{D}: g$ be an adjunction. There is a canonical equivalence\footnote{The author first learned about this result in \cite[p. 2]{Jas24}.} 
\begin{equation*}
\mathcal{L}_*(f) \xrightarrow{\ \, \simeq\ \, } \mathcal{L}^*(g), \quad\quad (c,\varphi: f(c) \to d) \ \longmapsto \ (d,\varphi^\sharp: c \to g(d))
\end{equation*}
where $\varphi^\sharp: c \to g(d)$ denotes the adjunct of $\varphi: f(c) \to d$.
\end{lemm}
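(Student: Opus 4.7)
The plan is to use Lurie's model of adjunctions as biCartesian fibrations over $\Delta^1$, which will exhibit both $\mathcal{L}_*(f)$ and $\mathcal{L}^*(g)$ as equivalent to a common third $\infty$-category and yield the claimed action on objects for free. By the theory in \cite[\S5.2.2]{Lur09}, the adjunction $f \dashv g$ corresponds to a functor $p: M \to \Delta^1$ that is simultaneously a Cartesian and a coCartesian fibration, with fibers $M_0 \simeq \mathcal{C}$ and $M_1 \simeq \mathcal{D}$; coCartesian transport along the nondegenerate edge $0 \to 1$ recovers $f$, while Cartesian transport recovers $g$. Consider the $\infty$-category of \emph{morphisms lying over} $0 \to 1$,
$$M_{\mathrm{arr}} := \Fun_{/\Delta^1}(\Delta^1, M),$$
whose objects are morphisms $c \to d$ in $M$ with $c \in \mathcal{C}$ and $d \in \mathcal{D}$.

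The heart of the argument is to identify $M_{\mathrm{arr}}$ with each side of the desired equivalence. Using the coCartesian structure of $p$, every morphism $m: c \to d$ in $M_{\mathrm{arr}}$ admits an essentially unique factorization $c \xrightarrow{\mathrm{cocart}} f(c) \xrightarrow{\varphi} d$ through a coCartesian lift followed by a morphism in the fiber $\mathcal{D}$. This produces an equivalence $M_{\mathrm{arr}} \xrightarrow{\,\simeq\,} \mathcal{C} \times_\mathcal{D} \Fun(\Delta^1, \mathcal{D}) = \mathcal{L}_*(f)$ sending $m$ to $(c, \varphi)$. Dually, using the Cartesian structure, $m$ factors as $c \xrightarrow{\psi} g(d) \xrightarrow{\mathrm{cart}} d$, yielding an equivalence $M_{\mathrm{arr}} \xrightarrow{\,\simeq\,} \Fun(\Delta^1, \mathcal{C}) \times_\mathcal{C} \mathcal{D} = \mathcal{L}^*(g)$ sending $m$ to $(d, \psi)$. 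Composing these gives $\mathcal{L}_*(f) \simeq \mathcal{L}^*(g)$, and on objects a morphism $m \in M_{\mathrm{arr}}$ is sent to $(c, \varphi)$ on one side and $(d, \psi)$ on the other; by the very definition of the adjunction, the passage $\varphi \mapsto \psi$ is precisely the adjunction bijection $\varphi \mapsto \varphi^\sharp$.

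The main obstacle is promoting the (co)Cartesian factorizations from essentially unique choices on objects to coherent equivalences of $\infty$-categories. Concretely, one must verify that the evaluation functors $M_{\mathrm{arr}} \to \mathcal{C}$ (at the source) and $M_{\mathrm{arr}} \to \mathcal{D}$ (at the target) assemble, via coCartesian and Cartesian lifts respectively, into Joyal trivial fibrations onto $\mathcal{L}_*(f)$ and $\mathcal{L}^*(g)$; this is essentially the content of the characterizing lifting property of (co)Cartesian edges. An alternative, more elementary route avoiding the biCartesian correspondence would construct $\Phi: \mathcal{L}_*(f) \to \mathcal{L}^*(g)$ directly by forming 2-simplices witnessing the composite $c \xrightarrow{\eta_c} gf(c) \xrightarrow{g(\varphi)} g(d)$ built from the unit $\eta$, and dually $\Psi$ using the counit $\epsilon$, then show $\Phi \circ \Psi \simeq \mathrm{id}$ and $\Psi \circ \Phi \simeq \mathrm{id}$ via the triangle identities --- at the cost of substantially more combinatorial bookkeeping with Kan extensions.
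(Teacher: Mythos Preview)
Your proposal is correct and takes essentially the same approach as the paper: both identify $\mathcal{L}_*(f)$ and $\mathcal{L}^*(g)$ with the $\infty$-category of sections $\Fun_{/\Delta^1}(\Delta^1,M)$ of the biCartesian fibration $M \to \Delta^1$ classified by the adjunction. The paper compresses your entire argument into a single citation of \cite[Lemma 5.4.7.15]{Lur09} and its dual, which is precisely the statement that the (co)Cartesian factorization you describe gives an equivalence between the sections category and $\mathcal{L}_*(f)$ (resp.\ $\mathcal{L}^*(g)$); so the ``main obstacle'' you flag is already packaged in that reference.
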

\begin{proof}
This is a simple consequence of the fact that, by \cite[Lemma 5.4.7.15]{Lur09} and its dual, both $\mathcal{L}_*(f)$ and $\mathcal{L}^*(g)$ compute the $\infty$-category of sections $\mathsf{Map}_{\Delta^1}(\Delta^1,\mathcal{E})$ of the bicartesian fibration $p: \mathcal{E} \to \Delta^1$ classified by the adjunction $f \dashv g$.
\end{proof}

\begin{proof}[Proof {\normalfont{(of \Cref{lemm:src-snk})}}]
We prove the case of products, the one of coproducts being dual.
Because $\mathcal{C}^{(-)}=\Fun(-,\mathcal{C})$ takes colimits to limits, we obtain from \eqref{eq:src-snk} the following (homotopy) pullback square of $\infty$-categories:
\begin{equation} \label{eq:src-pb}
\begin{tikzcd} 
\mathcal{C}^\Src \arrow[r] \arrow[d] \ar[rd,phantom, "\PBho"] &[.5em] (\mathcal{C}^n)^{\Delta^1} \arrow[d,"0^*"] \\[0.5em] 
\mathcal{C} \arrow[r,"\mathrm{ct}"'] & \mathcal{C}^n
\end{tikzcd}
\end{equation}
where the lower map is the constant functor induced from $\coprod_{i=1}^n \Delta^0 \to \Delta^0$, which is left adjoint to the product functor $\prod: \mathcal{C}^n \to \mathcal{C}$. Observe that, by \Cref{prop:hoPB}, \eqref{eq:src-pb} is a pullback in $\infCAT$. Now the result follows from \Cref{lemm:gluing-adjunction}: there is a canonical equivalence $\mathcal{C}^\Src = \mathcal{L}_*(\mathrm{ct}) \simeq \mathcal{L}^*(\prod)$ giving the pullback in the statement.
\end{proof}

\subsection{Abstract BGP reflection functors}
We now get to the general construction of abstract BGP reflection functors. Our construction substantially differs from that of \cite{DycJasWal21}; instead we follow an approach similar to that of \cite{GroSto18}. It consists of identifying $\mathcal{C}^Q$ with a \enquote{mesh} category of representations $\mathcal{C}^\Qpvmesh$ attached to an extended quiver $\Qpv$ containing both $Q$ and its reflection $\sigma_vQ$ (\Cref{theo:Qpv-mesh}). 

\begin{cons}
Let $Q$ be a (finite) quiver and $v \in Q_0$. We build an extended quiver $\Qpv$ by adding a new sink, suggestively denoted $\tau^{-1}v$ (c.f. \cref{sec:equiv-repet}), which is a reflection of the arrows $v \to w$. More precisely, $Q^+(v)$ has vertices $Q_0 \cup \{\tau^{-1}v\}$ and arrows those of $Q$ together with one arrow $w\to \tau^{-1}v$ for each $v\to w$ in $Q$. Dually, we can construct a quiver $\Qmv$ by adding a new source $\tau v$ which is a reflection of the arrows $w \to v$. If we let $n$ be the number of arrows from $v$ and $m$ the number of arrows to $v$, it follows from \Cref{rema:sset-quivers} that there are homotopy pushouts:
\begin{equation} \label{eq:def-Qpv}
\begin{tikzcd}[column sep=small, row sep=small] 
\coprod\limits_{i=1}^n \Delta^0 \arrow[r, hook] \arrow[d] \ar[rd,phantom, "\hoPO", pos=0.3] & \Snk \arrow[d] \\[.5em] Q \arrow[r, hook] & \Qpv \end{tikzcd} \quad\quad\quad\quad \begin{tikzcd}[column sep=small, row sep=small] 
\coprod\limits_{i=1}^m \Delta^0 \arrow[r, hook] \arrow[d] \ar[rd,phantom, "\hoPO", pos=0.3] & \Srcn{m} \arrow[d] \\[.5em] Q \arrow[r, hook] & \Qmv \end{tikzcd} 
\end{equation}
\end{cons}

Let us fix a stable $\infty$-category $\mathcal{C}$. From now on, we focus on $\Qpv$, since the constructions for $\Qmv$ are symmetric. 

\begin{prop}
There exists a pullback in $\infCAT$ of the form
\begin{equation} \label{eq:pb-Qpv} 
\begin{tikzcd} \mathcal{C}^\Qpv \arrow[r] \arrow[d] \ar[rd,phantom, "\hoPB",pos=0.55] & \mathcal{C}^{\Lambda^2_1} \arrow[d,"01^*"] \\[0.5em] \mathcal{C}^Q \arrow[r] & \mathcal{C}^{\Delta^1} \end{tikzcd} 
\end{equation}
where the functor $\mathcal{C}^\Qpv \to \mathcal{C}^{\Lambda^2_1}$ sends
$$X \in \mathcal{C}^\Qpv \quad\quad \longmapsto \quad\quad X_v \to \textstyle\bigoplus\limits_{v \to w} X_w \to X_{\tau^{-1}v}.$$
\end{prop}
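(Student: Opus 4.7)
The idea is to compute $\mathcal{C}^\Qpv$ as an iterated pullback using the pushout decomposition \eqref{eq:def-Qpv} of $\Qpv$, and then to recognise the pullback in the statement as the same object via a dual decomposition of $\Lambda^2_1$ into two composable copies of $\Delta^1$. First I would apply $\Fun(-,\mathcal{C})$ to the homotopy pushout \eqref{eq:def-Qpv} (a homotopy pushout since $\coprod_i\Delta^0 \hookrightarrow \Snk$ is a monomorphism), which yields a pullback in $\infCAT$
\[
\mathcal{C}^\Qpv \simeq \mathcal{C}^Q \times_{\mathcal{C}^n} \mathcal{C}^\Snk,
\]
with both legs given by restriction to the vertices $w_1,\dots,w_n$. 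Replacing $\mathcal{C}^\Snk$ by the pullback $\mathcal{C}^n \times_\mathcal{C} \mathcal{C}^{\Delta^1}$ provided by the sink case of \Cref{lemm:src-snk} (along $\coprod$ and $0^*$) and pasting pullbacks horizontally, I would obtain
\[
\mathcal{C}^\Qpv \simeq \mathcal{C}^Q \times_\mathcal{C} \mathcal{C}^{\Delta^1},
\]
where the legs are $X \mapsto \bigoplus_{v\to w} X_w$ and $0^*$, respectively.

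Next, the horn $\Lambda^2_1$ is the (homotopy) pushout $\Delta^{\{0,1\}} \sqcup_{\Delta^{\{1\}}} \Delta^{\{1,2\}}$, which gives $\mathcal{C}^{\Lambda^2_1} \simeq \mathcal{C}^{\Delta^1} \times_\mathcal{C} \mathcal{C}^{\Delta^1}$ via $1^*$ and $0^*$, with $01^*$ realised as the first projection. The source case of \Cref{lemm:src-snk}, applied along the map $\Src \to Q$ picking out the source at $v$ (i.e.\ $v$ together with its $n$ outgoing arrows), supplies a functor $\mathcal{C}^Q \to \mathcal{C}^\Src \to \mathcal{C}^{\Delta^1}$, $X \mapsto (X_v \to \prod_{v\to w} X_w)$. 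Pasting the proposed pullback with the decomposition of $\mathcal{C}^{\Lambda^2_1}$ then yields
\[
\mathcal{C}^Q \times_{\mathcal{C}^{\Delta^1}} \mathcal{C}^{\Lambda^2_1} \simeq \mathcal{C}^Q \times_\mathcal{C} \mathcal{C}^{\Delta^1},
\]
where the legs are $X \mapsto 1^*(X_v \to \prod_{v\to w} X_w) = \prod_{v\to w} X_w$ and $0^*$.

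Comparing the two presentations, they match precisely because in the stable $\infty$-category $\mathcal{C}$ the canonical comparison $\coprod_{v\to w} X_w \simeq \prod_{v\to w} X_w$ is an equivalence; this is the only genuine use of stability. Tracing an object $X \in \mathcal{C}^\Qpv$ through the whole chain of identifications then confirms that the induced functor $\mathcal{C}^\Qpv \to \mathcal{C}^{\Lambda^2_1}$ indeed has the form $X \mapsto (X_v \to \bigoplus_{v\to w} X_w \to X_{\tau^{-1}v})$, as stated. The main obstacle will be keeping track of the coherences along all these pastings: one must verify that the two maps $\mathcal{C}^Q \to \mathcal{C}$ appearing above are canonically equivalent, via the universal property of the source/sink pullbacks of \Cref{lemm:src-snk} together with the stable identification $\prod \simeq \coprod$.
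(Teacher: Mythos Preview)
Your proposal is correct and follows essentially the same route as the paper: first obtain $\mathcal{C}^\Qpv \simeq \mathcal{C}^Q \times_\mathcal{C} \mathcal{C}^{\Delta^1}$ by applying $\Fun(-,\mathcal{C})$ to \eqref{eq:def-Qpv} and pasting with the sink pullback of \Cref{lemm:src-snk}, then decompose $\Lambda^2_1$ as $\Delta^{\{0,1\}}\sqcup_{\Delta^{\{1\}}}\Delta^{\{1,2\}}$ and factor $\mathcal{C}^Q \to \mathcal{C}^{\Delta^1}$ through the source pullback to match the two presentations. Your explicit isolation of the identification $\coprod \simeq \prod$ as the one place stability enters is a nice touch; the paper leaves this implicit under the $\bigoplus$ notation when it asserts in its step~(4) that the map $X \mapsto \bigoplus_{v\to w}X_w$ factors through $\mathcal{C}^\Src \to \mathcal{C}^{\Delta^1} \xrightarrow{1^*} \mathcal{C}$.
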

\begin{proof}
The proof is a pasting argument in several steps:
\begin{enumerate}
    \item Applying $\mathcal{C}^{(-)} = \Fun(-,\mathcal{C})$ in \eqref{eq:def-Qpv}, we get a homotopy pullback square of stable representations that is also a pullback in $\infCAT$ (\Cref{prop:hoPB}):
    \begin{equation}
    \begin{tikzcd} \mathcal{C}^\Qpv \arrow[r] \arrow[d] \ar[rd,phantom, "\hoPB",pos=0.5] & \mathcal{C}^{\Snk} \arrow[d] \\[0.5em] \mathcal{C}^Q \arrow[r] & \mathcal{C}^n \end{tikzcd} \label{eq:pb-Qpv1}  
    \end{equation}
    where $n$ is the number of arrows from $v$.
    \item Next, by pasting of the pullbacks \eqref{eq:pb-Qpv1} and \eqref{eq:src-hoPB}, we get a pullback square in $\infCAT$:
    \begin{equation} \label{eq:pb-Qpv2} 
    \begin{tikzcd} \mathcal{C}^\Qpv \arrow[r] \arrow[d] \ar[rd,phantom, "\hoPB",pos=0.5] & \mathcal{C}^{\Delta^1} \arrow[d,"0^*"] \\[0.5em] \mathcal{C}^Q \arrow[r] & \mathcal{C}. \end{tikzcd} 
    \end{equation}
    \item The horn $\Lambda^2_1$ can be easily obtain by pasting two standard $1$-simplices along a vertex, as shown in the pushout below. Applying $\mathcal{C}^{(-)} = \Fun(-,\mathcal{C})$ to it we get a (homotopy) pullback square of stable representations that is also a pullback in $\infCAT$ (\Cref{prop:hoPB}):
    \begin{equation} \label{eq:pb-Qpv3} 
    \begin{tikzcd}
    \Delta^0 \ar[r,"0"] \ar[d,"1"'] \ar[rd,phantom, "\PO",pos=0.5] & \Delta^1 \ar[d,"12"] \\
    \Delta^1 \ar[r,"01"'] & \Lambda^2_1
    \end{tikzcd} \quad\quad \Rightarrow \quad\quad \begin{tikzcd}
    \mathcal{C}^{\Lambda^2_1} \ar[r,"12^*"] \ar[d,"01^*"'] \ar[rd,phantom, "\PBho",pos=0.6] & \mathcal{C}^{\Delta^1} \ar[d,"0^*"] \\[0.5em]
    \mathcal{C}^{\Delta^1} \ar[r,"1^*"'] & \mathcal{C}^{\Delta^0}.
    \end{tikzcd}
    \end{equation}
    \item We observe that the lower horizontal map in \eqref{eq:pb-Qpv2} is sending $X \in \mathcal{C}^Q$ to the object $\textstyle\bigoplus_{v \to w} X_w$ in $\mathcal{C}$, and it can be factored as a composition
    $$\mathcal{C}^Q \xrightarrow{\ u^* \ } \mathcal{C}^{\Src} \xrightarrow{\ \phi \ } \mathcal{C}^{\Delta^1} \xrightarrow{\ 1^* \ } \mathcal{C}$$
    where the first map is induced from $u: \Src \to Q$ that picks all arrows starting from $v$ and the second is the upper horizontal map in \eqref{eq:src-hoPB}.
    \item Now we let $\mathcal{D}$ be the (homotopy) pullback of the left square above, which is also a pullback in $\infCAT$ (\Cref{prop:hoPB}).
    \begin{equation}
    \begin{tikzcd}
    \mathcal{D} \ar[r] \ar[d] \ar[rd,phantom, "\PBho",pos=0.5] &[1em] \mathcal{C}^{\Lambda^2_1} \ar[r,"12^*"] \ar[d,"01^*"] \ar[rd,phantom, "\ \  \PBho"] &[1em] \mathcal{C}^{\Delta^1} \ar[d,"0^*"] \\[0.5em]
    \mathcal{C}^Q \ar[r,"\phi\, \circ\, u^*"'] & \mathcal{C}^{\Delta^1} \ar[r,"1^*"'] & \mathcal{C}^{\Delta^0}.
    \end{tikzcd}
    \end{equation}
    By pasting, the outer rectangle is a pullback (in $\infCAT$), and because of \eqref{eq:pb-Qpv2}, there is a canonical equivalence $\mathcal{D} \simeq \mathcal{C}^\Qpv$. \qedhere
\end{enumerate}
\end{proof}

\begin{defi} \label{def:mesh-subcat}
Let us denote $\mathcal{C}^{\square,\, \mathrm{cof}} = \Fun^\mathrm{cof}(\square,\mathcal{C})$ (c.f. \ref{subsec:stable}), and let $l:\Lambda^2_1 \hookrightarrow \square$ be the inclusion of the upper-right corner into the square. We define the \emph{mesh $\infty$-category} $\mathcal{C}^\Qpvmesh$ of $\Qpv$ by the following (homotopy) pullback:  
\begin{equation} \label{eq:Qpv-mesh}
\begin{tikzcd}
\mathcal{C}^\Qpvmesh \ar[r] \ar[d] \ar[rd,phantom, "\PBho",pos=0.55]  & \mathcal{C}^{\square,\, \mathrm{cof}} \ar[d,"l^*"] \\[0.5em]
\mathcal{C}^\Qpv \ar[r] & \mathcal{C}^{\Lambda^2_1},
\end{tikzcd} 
\end{equation}
where the lower horizontal map is that of \eqref{eq:pb-Qpv}.

Dually, we define the \emph{mesh $\infty$-category} $\mathcal{C}^\Qmvmesh$ of $\Qmv$ by the following (homotopy) pullback:  
\begin{equation} \label{eq:Qmv-mesh}
\begin{tikzcd}
\mathcal{C}^\Qmvmesh \ar[r] \ar[d] \ar[rd,phantom, "\PBho",pos=0.55]  & \mathcal{C}^{\square,\, \mathrm{cof}} \ar[d,"l^*"] \\[0.5em]
\mathcal{C}^\Qmv \ar[r] & \mathcal{C}^{\Lambda^2_1},
\end{tikzcd} 
\end{equation}
where the lower horizontal map is that of the analog of \eqref{eq:pb-Qpv} for $\Qmv$, i.e. it sends $X \ \mapsto \ (X_{\tau v} \to \bigoplus\limits_{w\to v} X_w \to X_v)$.
\end{defi}

\begin{rema}
The mesh $\infty$-category $\mathcal{C}^\Qpvmesh$ can be identified with pairs $(X,c)$ where $X$ is a representation $\Qpv \to \mathcal{C}$ and $c$ is a cofiber sequence 
$$\begin{tikzcd} X_v \ar[r] \dar &[-0.6em] \bigoplus\limits_{v\to w} X_w \ar[d] \\[-0.3em] 0 \rar & X_{\tau^{-1}v} \end{tikzcd}$$
realizing the value of $X$ at the extended vertex $\tau^{-1}v$ as the cofiber of $X_v \to \bigoplus\limits_{v\to w} X_w$.
\end{rema}

\begin{rema} \label{rema:functoriality}
The construction of the mesh $\infty$-category, $\mathcal{C} \mapsto \mathcal{C}^\Qpvmesh$, is functorial with respect to exact functors. Indeed, any exact functor $f: \mathcal{C} \to \mathcal{D}$ between stable $\infty$-categories gives a commutative diagram in $\infCAT$
$$\begin{tikzcd} \mathcal{C}^\Qpvmesh \rar \ar[d,"f_*"] & \mathcal{C}^{\Lambda^2_1} \ar[d,"f_*"] & \mathcal{C}^{\square,\, \mathrm{cof}} \lar \ar[d,"f_*"] \\
\mathcal{D}^\Qpvmesh \rar & \mathcal{D}^{\Lambda^2_1} & \mathcal{D}^{\square,\, \mathrm{cof}} \lar
\end{tikzcd}$$
which, by the pullback \eqref{eq:Qpv-mesh}, induces a functor $f_*:\mathcal{C}^\Qpvmesh \to \mathcal{D}^\Qpvmesh$.
\end{rema}

\begin{theo} \label{theo:Qpv-mesh}
Let $Q$ be a finite quiver and $v \in Q_0$. Restriction along the inclusion $Q \subset \Qpv$ induces a natural equivalence
$\mathcal{C}^\Qpvmesh \isoarrow \mathcal{C}^Q$.
\end{theo}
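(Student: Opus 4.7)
The plan is to reduce the statement to Fact~\ref{lemm:cof-eq} by a pasting argument. I would stack the pullback \eqref{eq:Qpv-mesh} defining $\mathcal{C}^\Qpvmesh$ on top of the pullback \eqref{eq:pb-Qpv} describing $\mathcal{C}^\Qpv$, obtaining a diagram in $\infCAT$
\begin{equation*}
\begin{tikzcd}
\mathcal{C}^\Qpvmesh \ar[r] \ar[d] \ar[rd,phantom,"\hoPB"] & \mathcal{C}^{\square,\, \mathrm{cof}} \ar[d,"l^*"] \\
\mathcal{C}^\Qpv \ar[r] \ar[d] \ar[rd,phantom,"\hoPB"] & \mathcal{C}^{\Lambda^2_1} \ar[d,"01^*"] \\
\mathcal{C}^Q \ar[r] & \mathcal{C}^{\Delta^1}
\end{tikzcd}
\end{equation*}
whose two inner squares are pullbacks. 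By the pasting lemma for pullbacks, the outer rectangle is then also a pullback in $\infCAT$.

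The next step is to identify the composite right vertical map $01^* \circ l^*$ with restriction along the top horizontal edge $00 \to 01$ of $\square$. Unwinding the definitions, $l$ picks out the path $00 \to 01 \to 11$ in $\square$ (so that $l^*$ sends a cofiber square to the path $X_v \to \bigoplus_{v\to w} X_w \to X_{\tau^{-1}v}$, matching the lower map of \eqref{eq:pb-Qpv}), and $01^*$ further restricts to the first edge of that path. Hence by Fact~\ref{lemm:cof-eq}, the composite $01^* \circ l^*: \mathcal{C}^{\square,\, \mathrm{cof}} \to \mathcal{C}^{\Delta^1}$ is an equivalence.

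Since base change along an equivalence in $\infCAT$ is an equivalence, the induced map on the left side of the outer pullback, $\mathcal{C}^\Qpvmesh \to \mathcal{C}^Q$, is also an equivalence. By construction of the pasted rectangle, this map factors as $\mathcal{C}^\Qpvmesh \to \mathcal{C}^\Qpv \to \mathcal{C}^Q$, where the second map is restriction along the inclusion $Q \subset \Qpv$; so it is precisely the restriction functor in the statement. Naturality with respect to exact functors $f: \mathcal{C} \to \mathcal{D}$ is automatic: as noted in Remark~\ref{rema:functoriality}, all three vertices and both maps of the defining pullback square are manifestly natural in $\mathcal{C}$, hence so is the resulting pullback.

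The main obstacle I anticipate is not deep but rather bookkeeping: carefully verifying the identification of $01^* \circ l^*$ with restriction along the top edge of $\square$ requires matching up the vertex-labeling conventions for $\Lambda^2_1$ and $\square$ that are implicit in Definition~\ref{def:mesh-subcat}. Once that match is made, the rest of the argument is a formal consequence of the pasting lemma and the already-established equivalence of Fact~\ref{lemm:cof-eq}.
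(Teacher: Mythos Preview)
Your proposal is correct and follows essentially the same route as the paper: paste the pullbacks \eqref{eq:Qpv-mesh} and \eqref{eq:pb-Qpv}, identify the composite right vertical map with the equivalence of Fact~\ref{lemm:cof-eq}, and conclude that the left vertical map is an equivalence. The paper's proof is terser and leaves the identification of $01^*\circ l^*$ implicit, but the argument is the same.
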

\begin{proof}
By pasting of the pullbacks \eqref{eq:pb-Qpv} and \eqref{eq:Qpv-mesh}, we obtain a pullback (in $\infCAT$)
\begin{equation*}
\begin{tikzcd} \mathcal{C}^\Qpvmesh \arrow[r] \arrow[d] \ar[rd,phantom, "\hoPB\, ",pos=0.55] & \mathcal{C}^{\square,\, \mathrm{cof}} \arrow[d] \\[0.5em] \mathcal{C}^Q \arrow[r] & \mathcal{C}^{\Delta^1}. \end{tikzcd} 
\end{equation*}
The right vertical map is an equivalence by \Cref{lemm:cof-eq}, hence so is the left one.
\end{proof}

We observe that \Cref{theo:Qpv-mesh} is a generalization of \Cref{lemm:cof-eq}, which corresponds to $Q$ equal to the Dynkin quiver $A_2$.

\begin{rema}
Keeping track of the precise nullhomotopy in the definition of the mesh $\infty$-category $\mathcal{C}^\Qpvmesh$ is crucial. Indeed, the forgetful functor $\mathcal{C}^\Qpvmesh \to \mathcal{C}^\Qpv$ is faithful but not full (on homotopy categories). For example, let $\mathcal{C}=\D{k}$ the derived category of a field, $Q = A_2$ and consider $c:\square \to \D{k}$ the trivial cofiber sequence $k \to 0 \to \Sigma k$. Then 
\begin{equation*} \pushQED{\qed}
\mathrm{End}_{h(\D{k}^{\square})}(c) \cong \mathrm{End}_{h(\D{k}^{\Delta^1})}(k \to 0) \cong k \not\cong k \times k \cong \mathrm{End}_{h(\D{k}^{\Lambda^2_1})}(k \to 0 \to \Sigma k). \qedhere\popQED
\end{equation*}
\end{rema}

There is of course a version of \Cref{theo:Qpv-mesh} for the symmetric construction $\Qmv$ that we state here:

\begin{theo} \label{theo:Qmv-mesh}
Let $Q$ be a finite quiver and $v \in Q_0$. Restriction along the inclusion $Q \subset \Qmv$ induces an equivalence
$\mathcal{C}^\Qmvmesh \isoarrow \mathcal{C}^Q$.
\end{theo}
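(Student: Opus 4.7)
The plan is to mirror the proof of \Cref{theo:Qpv-mesh}, with sources and $\Srcn{m}$ playing the role of sinks and $\Snk$, and with the right vertical arrow of a cofiber-sequence square playing the role of the top horizontal arrow.

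First, I would produce the $\Qmv$-analog of the pullback (\ref{eq:pb-Qpv}): a pullback in $\infCAT$
$$\begin{tikzcd} \mathcal{C}^\Qmv \arrow[r] \arrow[d] \ar[rd,phantom, "\hoPB"] & \mathcal{C}^{\Lambda^2_1} \arrow[d,"12^*"] \\ \mathcal{C}^Q \arrow[r] & \mathcal{C}^{\Delta^1} \end{tikzcd}$$
whose upper horizontal sends $X\mapsto(X_{\tau v}\to \bigoplus_{w\to v}X_w \to X_v)$. This is obtained by pasting three ingredients, exactly as in the Qpv case: first, $\Fun(-,\mathcal{C})$ applied to the pushout (\ref{eq:def-Qpv}) defining $\Qmv$; second, the \emph{source} half of \Cref{lemm:src-snk} applied to $\Srcn{m}$, which identifies $\mathcal{C}^\Qmv$ with the pullback of $\mathcal{C}^Q \xrightarrow{\bigoplus} \mathcal{C} \xleftarrow{1^*} \mathcal{C}^{\Delta^1}$ and records the new morphism $X_{\tau v}\to\bigoplus_{w\to v}X_w$; and third, the same horn pullback as in Step~3 of the Qpv proof, now read with the roles of the first and second edges of $\Lambda^2_1$ interchanged so that $12^*$ lands on the right.

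Pasting this with the defining square (\ref{eq:Qmv-mesh}) of $\mathcal{C}^\Qmvmesh$ yields
$$\begin{tikzcd} \mathcal{C}^\Qmvmesh \arrow[r] \arrow[d] \ar[rd,phantom, "\hoPB"] & \mathcal{C}^{\square,\mathrm{cof}} \arrow[d,"12^*\circ l^*"] \\ \mathcal{C}^Q \arrow[r] & \mathcal{C}^{\Delta^1} \end{tikzcd}$$
where the right vertical sends a cofiber-sequence square to its right vertical arrow $b\to c$.

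The crucial step is to observe that this restriction is an equivalence. For a cofiber-sequence square with top edge $f:a\to b$, its right vertical is precisely $\cof(f):b\to c$, so the restriction equals the composition
$$\mathcal{C}^{\square,\mathrm{cof}} \xrightarrow{\ \simeq\ } \mathcal{C}^{\Delta^1} \xrightarrow{\ \cof\ } \mathcal{C}^{\Delta^1},$$
where the first arrow is the equivalence of \Cref{lemm:cof-eq} and the second is an equivalence by \Cref{rema:fib-cof-equiv}. Hence the left vertical $\mathcal{C}^\Qmvmesh \to \mathcal{C}^Q$ is an equivalence too, and it is clearly identified with restriction along $Q\subset\Qmv$. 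The only subtlety throughout is keeping horn orientations and $(0,1)$-labels consistent; no new mathematical content is required beyond the proof of \Cref{theo:Qpv-mesh}.
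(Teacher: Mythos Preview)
Your argument is correct and is precisely the dual of the paper's proof of \Cref{theo:Qpv-mesh}; the paper itself gives no separate proof and simply declares the result symmetric, so you have faithfully unpacked what that symmetry means. The only place where your write-up differs from the most literal dualization is the final step: the strict dual of \Cref{lemm:cof-eq} would say that restriction of a \emph{fiber} sequence to its right vertical edge is an equivalence, which in a stable $\infty$-category is the same statement for cofiber sequences; you instead factor that restriction as (top-edge restriction)$\,\circ\,\cof^{-1}$ composed the other way, invoking \Cref{lemm:cof-eq} and \Cref{rema:fib-cof-equiv} separately. Both routes are equally short and yield the same conclusion.
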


\begin{coro}[Abstract reflection functors, {\cite{DycJasWal21}}] \label{coro:reflections}
Let $Q$ be a finite quiver and $v \in Q_0$ a source. There is a natural equivalence of $\infty$-categories
$\mathcal{C}^Q \simeq \mathcal{C}^{\sigma_v Q}$, for any $\mathcal{C}$ stable $\infty$-category. In particular, $Q$ and $\sigma_v Q$ are stably equivalent quivers.
\end{coro}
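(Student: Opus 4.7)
The plan is to realize both equivalences $\mathcal{C}^Q \simeq \mathcal{C}^\Qpvmesh$ and $\mathcal{C}^{\sigma_v Q} \simeq \mathcal{C}^{(\sigma_vQ)^-(v),\,\mathrm{mesh}}$ of \Cref{theo:Qpv-mesh} and \Cref{theo:Qmv-mesh} simultaneously, and then to exhibit a canonical isomorphism of quivers
\begin{equation*}
\Qpv \ \cong \ (\sigma_v Q)^-(v)
\end{equation*}
that identifies their mesh $\infty$-categories. Concatenating the three equivalences gives $\mathcal{C}^Q \simeq \mathcal{C}^{\sigma_vQ}$.

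The key combinatorial observation is the following. Since $v$ is a source of $Q$, every arrow adjacent to $v$ in $Q$ is of the form $v \to w$, and in $\sigma_v Q$ all these are reversed to $w \to v$, making $v$ a sink of $\sigma_v Q$. Now the extension $\Qpv$ adjoins a new sink $\tau^{-1}v$ with an arrow $w \to \tau^{-1}v$ for every $v \to w$ in $Q$, while the extension $(\sigma_v Q)^-(v)$ adjoins a new source $\tau v$ with an arrow $\tau v \to w$ for every $w \to v$ in $\sigma_v Q$. Renaming $v \leftrightarrow \tau v$ and $\tau^{-1}v \leftrightarrow v$ (and leaving the remaining vertices fixed) provides a bijection on vertices and on arrows that respects sources and targets, hence a quiver isomorphism $\psi\colon \Qpv \isoarrow (\sigma_v Q)^-(v)$. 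The inclusion $Q \hookrightarrow \Qpv$ corresponds under $\psi$ to the non-inclusion of $Q$ as the full subquiver $(Q_0\setminus\{v\})\cup\{\tau v\} \hookrightarrow (\sigma_v Q)^-(v)$, while the inclusion $\sigma_v Q \hookrightarrow (\sigma_v Q)^-(v)$ corresponds to the full subquiver $(Q_0\setminus\{v\})\cup\{\tau^{-1}v\}\hookrightarrow \Qpv$.

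Next I would verify that $\psi$ induces an equivalence of mesh $\infty$-categories $\mathcal{C}^\Qpvmesh \simeq \mathcal{C}^{(\sigma_vQ)^-(v),\,\mathrm{mesh}}$. Both mesh $\infty$-categories are defined by the same type of pullback (compare \eqref{eq:Qpv-mesh} and \eqref{eq:Qmv-mesh}), imposing a cofiber sequence on the $\Lambda^2_1$-restriction
\begin{equation*}
X_v \ \longrightarrow\ \textstyle\bigoplus\limits_{v\to w} X_w \ \longrightarrow\ X_{\tau^{-1}v}
\end{equation*}
in the first case, which under $\psi$ is precisely the cofiber sequence $X_{\tau v} \to \bigoplus_{w \to v\,\mathrm{in}\,\sigma_v Q} X_w \to X_v$ appearing in the second. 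Thus the two mesh $\infty$-categories are defined by isomorphic pullback diagrams in $\infCAT$, and the induced restriction functor is an equivalence. Applying \Cref{theo:Qpv-mesh} to $Q$ (with source $v$) and \Cref{theo:Qmv-mesh} to $\sigma_v Q$ (with sink $v$), we assemble the chain
\begin{equation*}
\mathcal{C}^Q \ \xleftarrow{\ \simeq\ } \ \mathcal{C}^\Qpvmesh \ \xrightarrow{\ \simeq\ } \ \mathcal{C}^{(\sigma_v Q)^-(v),\,\mathrm{mesh}} \ \xrightarrow{\ \simeq\ } \ \mathcal{C}^{\sigma_v Q},
\end{equation*}
and this is the desired equivalence.

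Finally, I would check stable equivalence in the sense of \Cref{def:stable-eq}. Each of the three arrows above is built only from restriction functors along quiver inclusions and from pullbacks in $\infCAT$; by \Cref{rema:functoriality} the mesh construction is (homotopy) natural in exact functors $\mathcal{C}\to\mathcal{D}$, and restriction along quiver inclusions is plainly natural, so the entire composite is a natural transformation of functors $h\infCAT^\mathsf{ex} \to h\infCAT$. I expect the only mildly delicate point to be the bookkeeping of the identification of pullback diagrams under $\psi$ - specifically, that the two lower horizontal maps into $\mathcal{C}^{\Lambda^2_1}$ in the respective defining pullbacks agree via $\psi^*$ - but this is a direct consequence of the fact that under the renaming $\psi$ the source diagram $v \to \bigoplus_{v\to w} X_w$ in $\Qpv$ becomes the sink diagram $\bigoplus_{w\to v} X_w \to v$ in $(\sigma_v Q)^-(v)$.
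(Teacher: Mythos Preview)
Your proposal is correct and follows essentially the same approach as the paper: identify the quiver isomorphism $\Qpv \cong (\sigma_vQ)^-(v)$ (the paper writes this as $\sigma_v\Qmv$), check that it is compatible with the cospans defining the two mesh pullbacks \eqref{eq:Qpv-mesh} and \eqref{eq:Qmv-mesh}, and then compose with the equivalences of \Cref{theo:Qpv-mesh,theo:Qmv-mesh}. The naturality argument via \Cref{rema:functoriality} is also the one the paper gives.
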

\begin{proof}
The shapes $\Qpv$ and $\sigma_v\Qmv$ are clearly isomorphic, with an isomorphism sending $v \in \Qpv$ to $\tau v \in \sigma_v\Qmv$ and $\tau^{-1}v \in \Qpv$ to $v \in \sigma_v\Qmv$. Moreover, this isomorphism is compatible with the formation of the functors $\mathcal{C}^\Qpv \to \mathcal{C}^{\Lambda^2_1}$ and $\mathcal{C}^\Qmv \to \mathcal{C}^{\Lambda^2_1}$ in \eqref{eq:pb-Qpv}, in the sense that there is an isomorphism of cospans
$$\begin{tikzcd}
\mathcal{C}^\Qpv \ar[r] \ar[d,"\cong"] & \mathcal{C}^{\Lambda^2_1} \ar[d,"\mathrm{id}"] & \ar[l] \mathcal{C}^{\square,\, \mathrm{cof}} \ar[d,"\mathrm{id}"] \\
\mathcal{C}^\Qmv \ar[r] & \mathcal{C}^{\Lambda^2_1} & \ar[l] \mathcal{C}^{\square,\, \mathrm{cof}}.
\end{tikzcd}$$
This induces an isomorphism $\mathcal{C}^\Qpvmesh \cong \mathcal{C}^{\sigma_v\Qmvmesh}$, and composing with the equivalences of \Cref{theo:Qpv-mesh,theo:Qmv-mesh} we get the desired equivalence:
\begin{equation*}
\mathcal{C}^Q \xleftarrow{\ \simeq\ } \mathcal{C}^\Qpvmesh \cong \mathcal{C}^{\sigma_v\Qmvmesh} \xrightarrow{\ \simeq\ } \mathcal{C}^{\sigma_vQ}.
\end{equation*}
The naturality follows from that of pullbacks and the fact that the equivalence $\mathcal{C}^\Qpvmesh \isoarrow \mathcal{C}^Q$ is natural with respect to exact functors, as it is the composition of the forgetful functor $\mathcal{C}^\Qpvmesh \to \mathcal{C}^\Qpv$ and the restriction $\mathcal{C}^\Qpv \to \mathcal{C}^Q$.
\end{proof}

We observe that \Cref{coro:reflections} is a generalization of the fiber-cofiber equivalence (\Cref{rema:fib-cof-equiv}), which corresponds to $Q$ equal to the Dynkin quiver $A_2$.

\begin{rema}
The equivalence $s^-:\mathcal{C}^Q \isoarrow \mathcal{C}^{\sigma_vQ}$ from \Cref{coro:reflections} acts as follows. First, it builds from $X:Q \to \mathcal{C}$ an extended representation $X^+: \Qpv \to \mathcal{C}$
where $X^+$ is given by 
$X^+_u = X_u$ if $u \neq \tau^{-1}v$ and $$X^+_{\tau^{-1}v} = \cof\left(X_v \to \textstyle\bigoplus\limits_{v \to w} X_w\right).$$ This is equivalently a representation $X^-: \sigma_v\Qmv \to \mathcal{C}$ with $X^-_u = X_u$ if $u \notin \{v,\tau v\}$, $X^-_v = X^+_{\tau^{-1}v}$ and $X^-_{\tau v} = X_v$. Finally, restricting to $s^-X:\sigma_vQ \to \mathcal{C}$ gives the homotopical analogue of the reflection formulas in \ref{subsec:classicBGP}.
\end{rema}

\begin{rema}[Generalized versions]
We observe that, until \Cref{theo:Qpv-mesh}, $Q$ being a quiver (i.e. a free $\infty$-category) is only used to get the homotopy pushouts \eqref{eq:def-Qpv}. Hence, if we start with a small $\infty$-category $K$ and a vertex $v \in K$ with $n$ arrows out of it, we can define $K^+(v)$ as the homotopy pushout \eqref{eq:def-Qpv} and still get \Cref{theo:Qpv-mesh}. For \cref{coro:reflections} we would also need this $v \in K$ to be a free source, in the sense that there are no arrows ending in $v$ except for its identity. This includes the main results of \cite[Theorem 9.11]{GroSto18} and \cite[Corollary 2.6]{DycJasWal21}, where the case of a small category (resp. small $\infty$-category) with an attached free source is considered.
\end{rema}

\section{Coherent Auslander-Reiten diagrams} \label{sec:equiv-repet}

Let $k$ be a field. To every $k$-linear Hom-finite Krull-Schmidt category $\mathsf{A}$ there is associated an important piece of information $\Gamma(\mathsf{A})$ called the Auslander-Reiten quiver. It has vertices the iso-classes of indecomposables in $\mathsf{A}$, and for $M,N \in \mathsf{ind}\, \mathsf{A}$, the number of arrows $[M] \to [N]$ equals $\mathrm{dim}_k\, \mathrm{Irr}(M,N)$, where $\mathrm{Irr} = \mathrm{rad}/\mathrm{rad}^2$ denotes the space of irreducible morphisms (c.f. \cite[ch. IV]{AssSimSko06}). This quiver provides a very detailed (but normally not complete) description of the category $\mathsf{ind}\, \mathsf{A}$. 

Recall that a \emph{translation quiver} $(\Gamma,\tau)$ consists of a locally finite quiver (only a finite number of adjacent arrows to any vertex) $\Gamma$ together with an injective map $\tau: \Gamma_0' \to \Gamma_0$ defined on a subset $\Gamma_0' \subset \Gamma_0$ such that, for any $x \in \Gamma_0'$ and $y \in \Gamma_0$, arrows $y \to x$ are in bijection with arrows $\tau x \to y$. The map $\tau$ is called the \emph{translation} of $\Gamma$, and a \emph{polarization} of $\Gamma$ is any choice of an injective map $\mu: \Gamma_1' \to \Gamma_1$ with $\Gamma_1' = \{y \to x \mid x \in \Gamma_0'\}$ sending $y \to x$ to an arrow $\tau x \to y$. A \emph{morphism of translation quivers} is a morphism of quivers commuting with $\tau$ and $\mu$. Any full subquiver of $\Gamma$ consisting of a vertex $x$, its translate $\tau x$ and their (common) neighbors is called a \emph{mesh} in $\Gamma$. Finally, a translation quiver is called \emph{stable} if $\Gamma_0' = \Gamma_0$.

\begin{fact}[\cite{Hap88}] \label{fact:Happel1}
Let $Q$ be a finite acyclic quiver and $A = kQ$ its path algebra.
\begin{enumerate}
    \item The quiver $\Gamma_Q := \Gamma(\mathsf{D}^b(\mathsf{mod}\, A))$ is a stable translation quiver with translation $\tau$ given by the derived Nakayama functor $-\otimes_A^\mathbb{L}DA$.
    \item The meshes in $\Gamma_Q$ correspond bijectively to the Auslander-Reiten triangles (up to isomorphism) in $\mathsf{D}^b(\mathsf{mod}\, A)$.
\end{enumerate}
\end{fact}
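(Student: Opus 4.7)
The plan is to deduce both claims from Happel's Serre duality theorem for $\mathsf{D}^b(\mathsf{mod}\, A)$ when $A$ has finite global dimension. Since $Q$ is finite acyclic, the path algebra $A = kQ$ is hereditary, so of global dimension at most one; in particular $DA$ is a perfect complex. Happel's key input is that the derived Nakayama functor $\nu = -\otimes_A^{\mathbb{L}} DA$ is a Serre functor on $\mathsf{D}^b(\mathsf{mod}\, A)$, i.e.\ there are functorial isomorphisms $\mathrm{Hom}(X,Y) \cong D\mathrm{Hom}(Y,\nu X)$. This is the one nontrivial ingredient; everything else is formal consequences.

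For (1), I would invoke the Reiten--Van den Bergh correspondence between Serre functors and Auslander--Reiten triangles in a Hom-finite Krull--Schmidt triangulated category: the existence of $\nu$ implies that every indecomposable $x \in \mathsf{D}^b(\mathsf{mod}\, A)$ sits in an AR triangle whose leftmost term is $\tau x$, expressed in terms of $\nu$ (with the appropriate shift convention used in the statement). The quiver $\Gamma_Q$ is locally finite because $\mathsf{D}^b(\mathsf{mod}\, A)$ is Hom-finite and Krull--Schmidt, so the spaces $\mathrm{Irr}(y,x)$ are finite-dimensional and nonzero for only finitely many $y$ around each $x$. Stability, i.e.\ $\tau$ being defined on every vertex, follows from the fact that $\nu$ is an autoequivalence of the entire derived category. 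The translation-quiver axiom---a bijection between arrows $y \to x$ and arrows $\tau x \to y$---comes from applying the Serre duality isomorphism to the bilinear pairing computing $\mathrm{Irr} = \mathrm{rad}/\mathrm{rad}^2$ on indecomposables, yielding $\dim_k \mathrm{Irr}(y,x) = \dim_k \mathrm{Irr}(\tau x, y)$, which after choosing a polarization promotes to an actual bijection of arrows.

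Part (2) is then essentially tautological. For each indecomposable $x$, the mesh ending at $x$ consists of the vertices $x$, $\tau x$, and their common neighbours $y_1, \dots, y_r$; by construction this matches the AR triangle $\tau x \to \bigoplus_i y_i^{\oplus n_i} \to x \to \Sigma \tau x$, where $n_i = \dim_k \mathrm{Irr}(y_i, x) = \dim_k \mathrm{Irr}(\tau x, y_i)$ is the multiplicity read off the quiver. The assignment \enquote{mesh $\mapsto$ AR triangle} is well-defined because AR triangles are unique up to isomorphism, and injectivity/surjectivity follow from the fact that AR triangles exist for every indecomposable (by part (1)) and their endpoints are indecomposable.

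The main obstacle is the Serre duality input itself. Proving that $-\otimes_A^{\mathbb{L}} DA$ is a Serre functor reduces, via adjunction, to producing a natural equivalence $\mathrm{RHom}_A(M, DA) \simeq DM$ for $M$ perfect, which one establishes first for $M = A$ (where it is the definition of $DA$) and then extends to all perfect complexes by dévissage, using that $\mathsf{Perf}(A) = \mathsf{D}^b(\mathsf{mod}\, A)$ thanks to the finiteness of global dimension. Once this is in hand, the translation-quiver structure and the mesh--triangle correspondence are formal.
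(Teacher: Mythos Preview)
The paper does not supply a proof of this statement at all: it is recorded as a \emph{Fact} with a bare citation to Happel \cite{Hap88}, so there is no argument in the paper to compare yours against. Your sketch is a correct outline of the standard proof, and indeed it is essentially how one extracts the result from Happel's book together with the later, cleaner formulation via Serre functors due to Reiten--Van den Bergh. The reduction to Serre duality for $\nu = -\otimes_A^{\mathbb{L}} DA$, the deduction of AR triangles for every indecomposable, and the identification $\dim_k \mathrm{Irr}(y,x) = \dim_k \mathrm{Irr}(\tau x, y)$ are all the right steps, and part~(2) is, as you say, formal once (1) is in place.

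One small point worth flagging: you correctly hedge with ``the appropriate shift convention,'' but it is worth being explicit. With the Serre functor $\nu$ as you define it, the AR translation is $\tau = \Sigma^{-1}\nu$, not $\nu$ itself; the paper's phrasing ``$\tau$ given by the derived Nakayama functor'' is slightly loose on this point. Your argument goes through unchanged with $\tau = \Sigma^{-1}\nu$, since $\Sigma$ is an autoequivalence and the Serre duality isomorphism still yields the required equality of irreducible-morphism dimensions.
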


Let $Q$ and $A=kQ$ as above. A key observation in representation theory is that $\mathrm{dim}_k\, \mathrm{Irr}(P(y),P(x))$ counts the number of arrows $x \to y$ in $Q$, where $P(x) = Ae_x$ denotes the indecomposable projective at the vertex $x \in Q$. Hence $Q^\mathrm{op} \cong \Gamma(\mathsf{proj}\, A)$ and there is a natural inclusion $Q^\mathrm{op} \subset \Gamma_Q$ corresponding to $\mathsf{proj}\, A \subset \mathsf{D}^b(\mathsf{mod}\, A)$. 

We borrow from \cite{MiyYek01} the following

\begin{nota}
A connected component of $\Gamma_Q = \Gamma(\mathsf{D}^b(\mathsf{mod}\, A))$ is called \emph{irregular\footnote{A more precise terminology would be to call these \emph{non-regular}, c.f. \cite[sec. VIII.2]{AssSimSko06}.}} if it is isomorphic to the connected component containing $Q^\mathrm{op}$ (i.e. the indecomposable projectives). We denote $\Gamma_Q^\mathrm{irr}$ the disjoint union of all irregular components of $\Gamma_Q$.
\end{nota}

To any quiver $Q$ there is associated the \emph{repetitive quiver} $\ZQ$, which is a stable translation quiver. The set of vertices of $\ZQ$ is $(\ZQ)_0 = \mathbb{Z} \times Q_0$, and for each arrow $\alpha: v \to w$ in $Q$ and $n\in\mathbb{Z}$, there are two arrows $(n,\alpha): (n,v) \to (n,w)$ and $(n,\alpha^*): (n,w) \to (n+1,v)$ in $\ZQ$. The translation of $\ZQ$ is given by $\tau(n,v) =(n-1,v)$, and it comes equipped with a canonical polarization $\mu(n,\alpha) = (n-1,\alpha^*)$. We identify $Q$ with the subquiver $\{0\} \times Q \subset \ZQ$, and so we often write $(0,v) = v$, $(n,v) = \tau^{-n}v$ and $(n,\alpha) = \tau^{-n}\alpha$. Observe that $\tau$ is an automorphism of $\ZQ$, and not only of $(\ZQ)_0$.

We will also need to consider a \emph{bigger repetitive quiver} $\mathbb{Z} \times \ZQ := \coprod_{i \in \mathbb{Z}} \mathbb{Z}Q$, i.e. consisting of countable copies $\{i\} \times \ZQ$ of $\ZQ$ with $i \in \mathbb{Z}$. It comes equipped with an automorphism $\sigma$ that acts only on the first factor by $\sigma(i) = i+1$. The quiver $\mathbb{Z} \times \ZQ$ is a stable translation quiver with translation $\tau$ and polarization $\mu$ that extend those of $\ZQ \cong \{0\}\times \ZQ$ and commute with $\sigma$.

\begin{fact}[\cite{Hap88}] \label{fact:Happel2}
Let $Q$ be a finite acyclic quiver and $A = kQ$ its path algebra.
\begin{enumerate}
    \item If $A$ has finite-representation type (i.e. $Q$ is Dynkin), then there is a canonical isomorphism of translation quivers $\Gamma_Q^\mathrm{irr} \cong \ZQ^\mathrm{op}$ which is the identity on $Q$. Moreover, in this case $\Gamma_Q^\mathrm{irr} = \Gamma(\mathsf{D}^b(\mathsf{mod}\, A))$.
    \item If $A$ has infinite-representation type (i.e. $Q$ is non-Dynkin), then there is a canonical isomorphism of translation quivers $\Gamma_Q^\mathrm{irr} \cong \mathbb{Z}\times\ZQ^\mathrm{op}$ which is the identity on $Q$ and sends the suspension $\Sigma$ to $\sigma$.
\end{enumerate}
\end{fact}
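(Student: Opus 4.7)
The plan is to construct an explicit isomorphism of translation quivers $\phi : \ZQ^\mathrm{op} \to \Gamma_Q^\mathrm{irr}$ by transporting the canonical embedding of $Q^\mathrm{op}$ into $\Gamma_Q$ (given by the indecomposable projectives of $A = kQ$) along powers of the AR-translation $\tau = \nu \circ \Sigma^{-1}$, where $\nu = - \otimes_A^\mathbb{L} DA$ is the derived Nakayama autoequivalence of $\Dd{b}{\mathsf{mod}\, A}$ (\Cref{fact:Happel1}). Two structural consequences of the hereditariness of $A$ will be used throughout: every indecomposable of $\Dd{b}{\mathsf{mod}\, A}$ has the form $\Sigma^i M$ with $M$ an indecomposable module concentrated in a single degree, and all AR-triangles arise either from module-level AR-sequences and their shifts or from the canonical map $P(v) \to I(v)$ for the triangle ending at a projective.

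The first step is to identify the full subquiver of $\Gamma_Q$ spanned by $\{[P(v)]\}_{v \in Q_0}$ with $Q^\mathrm{op}$: for hereditary $A$, the space $\mathrm{Irr}(P(y), P(x))$ has a canonical basis in bijection with the arrows $x \to y$ of $Q$, obtained from the radical filtration of $\mathrm{Hom}_A(P(y), P(x)) \cong e_y A e_x$. Setting $\phi((n, v)) := [\tau^n P(v)]$ on vertices produces a map intertwining the translation $\tau^\mathrm{op}(n, v) = (n+1, v)$ on $\ZQ^\mathrm{op}$ with the AR-translate $\tau$ on $\Gamma_Q$. To show that $\phi$ respects arrows and meshes, I would verify that the AR-triangle ending at $\tau^n P(v)$ has middle term decomposing as $\bigoplus_{\alpha \colon v \to w} \tau^n P(w) \oplus \bigoplus_{\beta \colon u \to v} \tau^{n+1} P(u)$, matching exactly the set of predecessors of $(n, v)$ in $\ZQ^\mathrm{op}$. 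The case $n = 0$ reduces to computing the fiber of the canonical map $P(v) \to I(v)$ using the radical/socle structure, and the general case follows by applying the autoequivalence $\tau^n$. The outcome is a morphism of translation quivers landing in the connected component of $[P(v)]$.

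To distinguish the two cases, I apply Gabriel's theorem. For (1), every indecomposable $A$-module is preprojective in the Dynkin case, and the finite-order Nakayama cycling (some iterate of $\tau$ sends a projective to a shifted projective) makes the $\tau$-orbit of the projectives visit every iso-class of $\mathsf{ind}(\Dd{b}{\mathsf{mod}\, A})$ exactly once, so $\Gamma_Q^\mathrm{irr} = \Gamma_Q$ and $\phi$ becomes a bijection on vertices. For (2), no such cycling occurs, so $\phi$ is an injective embedding onto a single irregular component $C_0 \cong \ZQ^\mathrm{op}$; applying $\Sigma^i$ for each $i \in \mathbb{Z}$ yields pairwise disjoint copies (distinct because $\Sigma^i M \cong \Sigma^j N$ in a hereditary derived category forces $i = j$ and $M \cong N$), which assemble into $\Gamma_Q^\mathrm{irr} \cong \coprod_{i \in \mathbb{Z}} \ZQ^\mathrm{op} = \ZZQ^\mathrm{op}$, with $\Sigma$ shifting the $i$-index and thereby matching $\sigma$ by construction.

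The main obstacle is the precise verification of the AR-triangle middle terms, in particular ruling out spurious irreducible morphisms between different $\tau$-rows or shift components. This relies on the vanishing of $\mathrm{Ext}^{\geq 2}_A$ from hereditariness together with explicit control of how $\nu$ acts on indecomposables, and it constitutes the computational heart of Happel's original argument. A secondary subtlety is the injectivity of $\phi$ in the Dynkin case: one must confirm that the cyclic identifications of the form $\tau^{m(v)} P(v) \cong \Sigma^{-k(v)} P(\rho(v))$ (for suitable integers $m(v), k(v)$ and permutation $\rho$ of $Q_0$) do not collapse distinct vertices of $\ZQ^\mathrm{op}$, which is guaranteed by the uniqueness of the hereditary decomposition $\Sigma^i M$ and by the fact that the cyclings only relate vertices already lying in a common $\tau$-orbit inside $\ZQ^\mathrm{op}$.
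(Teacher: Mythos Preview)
The paper does not supply a proof of this statement: it is recorded as a \emph{Fact} with a bare citation to Happel \cite{Hap88}, so there is nothing to compare your argument against in the paper itself. Your proposal is a reasonable sketch of the classical argument, and the overall architecture---embed $Q^{\mathrm{op}}$ via indecomposable projectives, propagate by powers of $\tau$, and separate Dynkin from non-Dynkin using whether the $\tau$-orbit of projectives is periodic up to shift---is exactly how Happel's proof proceeds.

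Two points deserve tightening. First, your claim that in the Dynkin case ``the $\tau$-orbit of the projectives visits every iso-class of $\mathsf{ind}(\Dd{b}{\mathsf{mod}\,A})$ exactly once'' is the entire content of the theorem in that case, and you have not actually argued it: the key input is that for Dynkin $Q$ every indecomposable module is preprojective (so lies in some $\tau^{-n}P(v)$), combined with Happel's identification $\tau \simeq \Sigma^{-1}\nu$ and the fractional Calabi--Yau property giving the shift-periodicity. Second, in the non-Dynkin case you assert that the shifted copies $\Sigma^i C_0$ are pairwise disjoint and exhaust $\Gamma_Q^{\mathrm{irr}}$, but you only argue disjointness; exhaustion requires knowing that every irregular component is a $\Sigma$-translate of the preprojective one, which again is a nontrivial input from Happel's structure theory (the other components---regular tubes or $\mathbb{Z}A_\infty$ components---are by definition not irregular, but one must still check that no further irregular components appear). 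These are precisely the ``computational heart'' steps you flag, and they are where the citation to \cite{Hap88} is doing real work.
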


\subsection{Representations of the Auslander-Reiten quiver} \label{subsec:ARquiver} Let $Q$ be a finite acyclic quiver and $A=kQ$ its path algebra. We explain here an intrinsic way to manipulate the Auslander-Reiten quiver $\Gamma_Q=\Gamma(\mathsf{D}^b(\mathsf{mod}\, A))$. 

From now on we use derived $\infty$-categories, so we can interpret $\Dd{b}{kQ}$ as a stable $\infty$-category of representations $\Dd{b}{k}^Q$. Moreover, since $Q \cong \Gamma(\mathsf{proj}\, A)^\mathrm{op}$, then $\Dd{b}{kQ}$ consists of contravariant representations of the indecomposable projectives. Under this identification, the inclusion $Q \subset \Gamma_{Q^\mathrm{op}}$ induces a restriction functor 
$$\mathrm{res}: \Dd{b}{k}^{\Gamma_{Q^\mathrm{op}}} \xrightarrow{\ \ \quad} \Dd{b}{k}^Q \simeq \Dd{b}{kQ}$$
which evaluates a representation $Y: \Gamma_{Q^\mathrm{op}} \to \Dd{b}{k}$ at the indecomposable projectives, i.e. $(\mathrm{res}\, Y)_v = Y_{P(v)}$. Conversely, there is a way to recover the values of a representation $X: Q \to \Dd{b}{k}$ in terms of the $P(v)$'s. This is by means of the Yoneda-type equivalence $\mathbb{R}\iHom(P(v),X) \simeq X_{v}$, where $\mathbb{R}\iHom(-,X): \Dd{b}{kQ} \to \Dd{b}{k}$ is the derived functor of the internal hom-complex. There is a functor
\begin{align*}
\Dd{b}{k}^Q \simeq \Dd{b}{kQ} &\xrightarrow{\ \ \quad} \Dd{b}{k}^{\Gamma_{Q^\mathrm{op}}} \\
X \quad &\xmapsto{\ \ \quad} \ \widetilde{X} = \mathbb{R}\underline{\mathrm{Hom}}(-,X)
\end{align*}
which is a homotopy right-inverse of the restriction. Indeed, this functor extends a representation $X$ of $Q$ to a representation $\widetilde{X}$ of $\Gamma_{Q^\mathrm{op}}$; from taking values in the indecomposable projectives to taking values in every indecomposable of $\Dd{b}{kQ}$. We call the extended representations $\widetilde{X} = \mathbb{R}\underline{\mathrm{Hom}}(-,X) \in \Dd{b}{k}^{\Gamma_{Q^\mathrm{op}}}$ \emph{coherent Auslander-Reiten diagrams}. From these, one can exploit properties of the Auslander-Reiten quiver at the level of representations:

\begin{example}
Let $X \in \Dd{b}{k}^Q \simeq \Dd{b}{kQ}$ and consider the Auslander-Reiten translation $\tau$ in $\mathsf{D}^b(kQ)$ (i.e. the derived Nakayama functor) and its inverse $\tau^-$. There are natural equivalences:
$$(\tau X)_v \simeq \mathbb{R}\iHom(P(v),\tau X) \simeq \mathbb{R}\iHom(\tau^-P(v),X) \simeq \widetilde{X}(\tau^-P(v)).$$
That is, one can recover the translate of $X$ in $\mathsf{D}^b(kQ)$ from the coherent Auslander-Reiten diagram $\widetilde{X} = \mathbb{R}\underline{\mathrm{Hom}}(-,X)$ and the translation of $\Gamma_Q$.
\end{example}

Coherent Auslander-Reiten diagrams admit the following abstract description:

\begin{lemm} \label{lemm:cohAR}
Let $X \in \Dd{b}{kQ}$ and write $\widetilde{X} = \mathbb{R}\underline{\mathrm{Hom}}(-,X) \in \Dd{b}{k}^{\Gamma_{Q^\mathrm{op}}}$. Then:
\begin{enumerate}
    \item Every mesh in $\Gamma_Q$ starting in $\tau M$ and ending in $M$ is sent by $\widetilde{X}$ to a cofiber sequence of the form
    $$\widetilde{X}(M) \to \textstyle\bigoplus\limits_{i=1}^n \widetilde{X}(E_i) \to \widetilde{X}(\tau M).$$
    \item If $Q$ is non-Dynkin, and $\sigma$ is the automorphism of $\Gamma_Q^\mathrm{irr}$ corresponding to the suspension (c.f. {\normalfont{\Cref{fact:Happel2}}}), then $\widetilde{X}$ sends $\sigma M$ to the suspension $\Sigma\widetilde{X}(M)$.
\end{enumerate}
\end{lemm}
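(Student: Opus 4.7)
Both assertions come from transporting structure from $\Gamma_Q$ into the stable $\infty$-category $\Dd{b}{kQ}$ via the identifications of \Cref{fact:Happel1,fact:Happel2}, and then applying the contravariant exact functor $\mathbb{R}\iHom(-,X) \colon \Dd{b}{kQ}^{\mathrm{op}} \to \Dd{b}{k}$, whose restriction to indecomposables is precisely $\widetilde{X}$.

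For (1), by \Cref{fact:Happel1}(2) a mesh in $\Gamma_Q$ from $\tau M$ to $M$ with middle vertices $E_1,\ldots,E_n$ corresponds to an Auslander-Reiten triangle
\[
\tau M \xrightarrow{\ \ } \textstyle\bigoplus_{i=1}^n E_i \xrightarrow{\ \ } M \xrightarrow{\ \ } \Sigma\tau M
\]
in $\Dd{b}{kQ}$, which is a distinguished triangle, equivalently a cofiber sequence at the $\infty$-categorical level. Since $\mathbb{R}\iHom(-,X)$ is exact (preserves cofiber sequences dually), applying it yields a cofiber sequence $\widetilde{X}(M) \to \bigoplus_i \widetilde{X}(E_i) \to \widetilde{X}(\tau M)$ in $\Dd{b}{k}$. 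The point that needs checking is that the natural diagrammatic structure on the mesh in $\Gamma_{Q^{\mathrm{op}}}$ (a wedge of arrows in and out of the middle vertices) is sent by $\widetilde{X}$ to precisely the diagram whose \enquote{pinched} form is the above cofiber sequence; this can be argued by recognising the wedge via \Cref{lemm:src-snk} to identify the middle term with the biproduct $\bigoplus_i \widetilde{X}(E_i)$.

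For (2), \Cref{fact:Happel2}(2) identifies the vertex $\sigma M$ of $\Gamma_Q^{\mathrm{irr}}$ with the indecomposable $\Sigma M$ of $\Dd{b}{kQ}$. Hence $\widetilde{X}(\sigma M) \simeq \mathbb{R}\iHom(\Sigma M, X)$, and using exactness of $\mathbb{R}\iHom(-,X)$ between stable $\infty$-categories (so that it commutes with the suspension of the opposite $\infty$-category) one obtains the claimed equivalence with $\Sigma\widetilde{X}(M)$. The main obstacle in both parts is the bookkeeping of variances: one has to align the arrow-reversal between $\Gamma_Q$ and $\Gamma_{Q^{\mathrm{op}}}$ with the contravariance of $\mathbb{R}\iHom(-,X)$, and check that the identification of $\sigma$ with $\Sigma$ on $\Dd{b}{kQ}$ feeds through $\widetilde{X}$ with the correct sign so that one lands in $\Sigma\widetilde{X}(M)$ rather than $\Omega\widetilde{X}(M)$. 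Once these conventions are in place, both statements reduce to the defining property that an exact functor between stable $\infty$-categories preserves cofiber sequences and commutes (up to sign) with suspension.
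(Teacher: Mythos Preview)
Your proposal is correct and follows essentially the same approach as the paper: both reduce the claim to the single observation that $\widetilde{X} = \mathbb{R}\iHom(-,X)\colon \Dd{b}{kQ}^{\mathrm{op}} \to \Dd{b}{k}$ is an exact functor of stable $\infty$-categories, so it carries the Auslander--Reiten (co)fiber sequences underlying the meshes to cofiber sequences and intertwines suspensions. The paper's proof is in fact just this one line; your invocation of \Cref{lemm:src-snk} for the biproduct identification is harmless but unnecessary, and the variance bookkeeping you flag is exactly what the paper leaves implicit.
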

\begin{proof}
Follows from the fact that $\widetilde{X}=\mathbb{R}\underline{\mathrm{Hom}}(-,X): \Dd{b}{kQ}^\mathrm{op} \to \Dd{b}{k}$ is an exact functor of stable $\infty$-categories.
\end{proof}

\begin{rema} \label{rema:cohAR}
By the description of the Auslander-Reiten quiver in \Cref{fact:Happel1,fact:Happel2}, it follows that one can reach every vertex of the irregular part $\Gamma_Q^\mathrm{irr}$ from the indecomposable projectives using meshes and the automorphism $\sigma$.
Hence, the previous lemma tells us how to extend $X$ to $\widetilde{X}$ in $\Gamma_Q^\mathrm{irr}$ using cofibers and suspension.
\end{rema}

\subsection{Mesh representations of the repetitive quiver} We give an abstract version of coherent Aulander-Reiten diagrams for arbitrary stable $\infty$-categories. The main construction (\Cref{theo:ZQ-mesh}) is given for diagrams of shape $\ZQ$ and it is based on the ideas of \Cref{lemm:cohAR} and \Cref{rema:cohAR}. By iterated abstract reflection functors, we are able to identify $\mathcal{C}^Q$ with a suitable \emph{mesh $\infty$-category} of $\ZQ$.

\begin{nota}
Let $Q$ be a finite quiver and $\{v_1,...,v_n\} \subset Q_0$ an (ordered) finite set of vertices. For $?\in\{+,-\}$, we define $Q^?(v_1,...,v_n)$ inductively as $Q^?(\varnothing) := Q$ and $Q^?(v_1,...,v_k) := Q^?(v_1,...,v_{k-1})^?(v_k)$.
\end{nota}

Let $Q$ be a finite quiver. An ordering $Q_0 = \{v_1,...,v_n\}$ of its vertices is called \emph{admissible} if for each $i$ the vertex $v_i$ is a source in $\sigma_{v_{i-1}}\cdots\sigma_{v_1}Q$. 
We observe that an admissible ordering exists if and only if $Q$ is acyclic, see \cite[Lemma 3.1.1]{Kra08}.

\begin{cons}[Knitting] \label{cons:knitting}
Let $Q$ be a finite acyclic quiver and $Q_0=\{v_1,...,v_k\}$ an admissible ordering of its vertices.
\begin{itemize}[align=parleft, labelwidth=\widthof{(Knitting to both sides)}, leftmargin=\dimexpr\labelwidth+\labelsep\relax+0.25cm]
    \item[(Knitting to the right)] For $n \in \mathbb{N} \cup \{\infty\}$, we define ${_nQ}$  inductively:
    $$_0Q := Q, \quad _nQ := {_{n-1}Q}^+(\tau^{-n+1}v_1,...,\tau^{-n+1}v_k), \quad \text{and} \quad _\infty Q := \bigcup\limits_{n\geq 0}{_nQ}.$$
    \item[(Knitting to the left)] For $n \in \mathbb{N} \cup \{\infty\}$, we define ${_{-n}Q}$  inductively:
    $$_0Q := Q, \quad _{-n}Q := {_{-n+1}Q}^-(\tau^{n-1}v_k,...,\tau^{n-1}v_1), \quad \text{and} \quad _{-\infty} Q := \bigcup\limits_{n\geq 0}{_{-n}Q}.$$
    \item[(Knitting to both sides)] For $n \in \mathbb{N} \cup \{\infty\}$, we define ${_n\underline{Q}}$  inductively:
    \begin{align*}
    _0\underline{Q} := Q, \quad _n\underline{Q} &:= {_{n-1}\underline{Q}}^+(\tau^{-n+1}v_1,...,\tau^{-n+1}v_k)^-(\tau^{n-1}v_k,...,\tau^{n-1}v_1), \\ \text{and} \quad _\infty \underline{Q} &:= \bigcup\limits_{n\geq 0}{_n\underline{Q}}.  
    \end{align*}
\end{itemize}  
\end{cons}

\begin{prop}
Let $Q$ be a finite acyclic quiver. There are natural isomorphisms of quivers $_\infty Q \cong \mathbb{N}Q$, $_{-\infty} Q \cong -\mathbb{N}Q$ and $_\infty \underline{Q} \cong \ZQ.$
\end{prop}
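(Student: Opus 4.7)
The plan is to prove the three isomorphisms by induction on the knitting parameter $n$, showing that at each finite stage the identification $\tau^{-i}v \leftrightarrow (i, v)$ extends from $_{n-1}Q$ to $_nQ$, and then passing to the colimit. First I would fix the natural candidate isomorphism $\phi_n: {_nQ} \to \mathbb{N}Q\rvert_{[0,n]}$ onto the full subquiver spanned by vertices $(i, v)$ with $0 \leq i \leq n$, sending $\tau^{-i}v \mapsto (i, v)$. The base case $n=0$ is immediate since $_0Q = Q$ and $\mathbb{N}Q\rvert_{[0,0]} = \{0\}\times Q$. The three isomorphisms of the statement are all obtained as unions of such $\phi_n$'s, so once the inductive step is done for $^+$-knitting, the statements for $_{-\infty}Q$ and $_\infty\underline{Q}$ follow essentially by symmetry.

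For the inductive step, the core task is a single knitting round: going from $_{n-1}Q$ to $_n Q$ amounts to applying $^+$ successively at $\tau^{-n+1}v_1, \ldots, \tau^{-n+1}v_k$. I would run a nested induction on $i$ showing that the intermediate quiver $R_i := {_{n-1}Q}^+(\tau^{-n+1}v_1)\cdots{}^+(\tau^{-n+1}v_i)$ coincides, under the natural vertex identification, with the subquiver of $\mathbb{N}Q\rvert_{[0,n]}$ obtained from $\mathbb{N}Q\rvert_{[0,n-1]}$ by adjoining the new vertices $(n, v_1), \ldots, (n, v_i)$ together with all arrows of $\mathbb{N}Q$ involving only these vertices and $\mathbb{N}Q\rvert_{[0,n-1]}$. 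The key step is to enumerate the outgoing arrows of $\tau^{-n+1}v_i$ in $R_{i-1}$: these are the intra-level arrows $(n-1, v_i) \to (n-1, v_j)$ for each $v_i \to v_j$ in $Q$ (inherited from the outer induction), plus the arrows $(n-1, v_i) \to (n, v_j)$ for each $v_j \to v_i$ in $Q$ with $j < i$ (produced by $^+(\tau^{-n+1}v_j)$ at an earlier stage of the round, because then $(n-1, v_i)$ was a target of the outgoing arrow $(n-1, v_j) \to (n-1, v_i)$). Applying $^+(\tau^{-n+1}v_i)$ then introduces the sink $(n, v_i)$ with incoming arrows from exactly $(n-1, v_j)$ for each $v_i \to v_j$ in $Q$, matching the $\alpha^*$-arrows of $\mathbb{N}Q$, and from $(n, v_j)$ for each $v_j \to v_i$ in $Q$ with $j < i$, matching the intra-level arrows at level $n$.

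The critical use of the admissible ordering is the topological property it encodes: if $v_j \to v_i$ in $Q$, then $j < i$. This is precisely what guarantees that every intra-level arrow $(n, v_j) \to (n, v_i)$ of $\mathbb{N}Q$ is created at the right moment (step $i$, after $v_j$ has been handled), and that no spurious arrow is produced and none is missed. Taking the colimit over $n$ then yields $_\infty Q \cong \mathbb{N}Q$ by assembly of the $\phi_n$'s. The case $_{-\infty}Q \cong -\mathbb{N}Q$ is completely dual---use $^-$ in place of $^+$ and reverse the ordering so that sinks of $Q$ are processed first---and for $_\infty\underline{Q} \cong \mathbb{Z}Q$ I would observe that within each round the $^+$-sweep produces level $n$ of $\mathbb{Z}Q$ on the right while the $^-$-sweep produces level $-n$ on the left, and these operations act on disjoint parts of the current quiver, so the two halves of the induction proceed in parallel without interference.

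The expected main obstacle is the combinatorial bookkeeping inside the nested induction: one must carefully verify that at every intermediate stage $R_{i-1}$ the outgoing arrows of $\tau^{-n+1}v_i$ are \emph{exactly} those listed above, which amounts to checking that no arrows are accidentally introduced that shouldn't be present in $\mathbb{N}Q$, and that admissibility of the ordering is what rules this out. Once this single-round lemma is nailed down cleanly, the rest of the argument is a routine colimit assembly, and the naturality (in the choice of admissible ordering, and with respect to quiver morphisms preserving such orderings) follows from the fact that the candidate maps $\phi_n$ are canonically determined by the vertex identification $\tau^{-i}v \leftrightarrow (i,v)$.
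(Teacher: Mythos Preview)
Your proposal is correct and takes a genuinely different route from the paper. You build the isomorphism explicitly by a nested induction, tracking the finite stages $\phi_n: {_nQ} \to \mathbb{N}Q\rvert_{[0,n]}$ and verifying at each intermediate step $R_i$ that the outgoing arrows of the vertex being processed are exactly those predicted by the repetitive quiver. The paper instead argues abstractly: it checks that $(_\infty Q,\tau)$ is a translation quiver, that $Q$ sits inside it as a \emph{section} in the sense of \cite[Definition VIII.1.2]{AssSimSko06}, and that the domain of $\tau$ is as expected; it then invokes the characterization \cite[Proposition VIII.1.5]{AssSimSko06}, which says a translation quiver with a prescribed section is uniquely determined (hence isomorphic to $\mathbb{N}Q$). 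The paper's approach is shorter and avoids the inner bookkeeping you anticipate as the main obstacle, but it relies on an external structural result. Your approach is self-contained and makes the isomorphism completely explicit, at the cost of the combinatorial case analysis; it also makes the independence of the admissible ordering immediately visible through the canonical vertex identification $\tau^{-i}v \leftrightarrow (i,v)$, whereas the paper deduces this as a separate remark after the fact.
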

\begin{proof}
We only prove the first isomorphism, the others being analogous. Using the result \cite[Proposition VIII.1.5]{AssSimSko06}, it will follow from the next three simple facts: (1) $(_\infty Q, \tau)$ is a translation quiver; (2) $Q$ is a section of ${_\infty Q}$ in the sense of \cite[Definition VIII.1.2]{AssSimSko06}; and (3) for $v_i \in Q_0$, $\tau^nv_i$ is defined in ${_\infty}Q$ if and only if $n \leq 0$.

Let us prove (1), (2) and (3) now. The last one is clear. For (2): $Q$ is acyclic by assumption; by definition, every vertex $v$ in ${_\infty Q}$ is such that $\tau^nv \in Q_0$ for a unique $n \in \mathbb{Z}$; and the convexity of $Q$ in ${_\infty Q}$ follows directly from the fact that all predecessors of $v_i \in Q_0$ are in $Q$. Finally, for (1), one needs to show that there is a bijection $\mathrm{arr}(v,w) \isoarrow \mathrm{arr}(w,\tau^{-1}v)$ for any $v,w \in {_\infty Q}_0$. It is clear that such bijections exist for $v$ in the step $\Gamma^+(v)$ where $\tau^{-1}v$ is added (for $\Gamma$ the subquiver of ${_\infty Q}$ previous to the addition of $\tau^{-1}v$). Arrows $\bullet \to \tau^{-1}v$ are only added in such a step, so $\mathrm{arr}(w,\tau^{-1}v)$ is never changed. But because we are adding vertices according to the admissible ordering, it is also true that no arrows $v \to \bullet$ can be added in the next steps. Thus, the bijection $\mathrm{arr}(v,w) \isoarrow \mathrm{arr}(w,\tau^{-1}v)$ still holds in ${_\infty Q}$.
\end{proof}

\begin{rema}
The previous proposition also shows that \Cref{cons:knitting} does not depend on the particular choice of an admissible ordering.
\end{rema}

Let $\mathcal{C}$ be a stable $\infty$-category along this section.

\begin{rema}
We observe that $_\infty Q$, $_{-\infty} Q$ and $_\infty \underline{Q}$ are defined as direct unions in $\sSet$ (i.e. filtered colimits of inclusions).
$$Q={_0 \underline{Q}} \xhookrightarrow{\quad} {_1 \underline{Q}} \xhookrightarrow{\quad} \cdots \xhookrightarrow{\quad} {_n \underline{Q}} \xhookrightarrow{\quad} \cdots \quad\quad \text{with \ $\varinjlim\limits_{n\geq 0} {_n \underline{Q}} = {_\infty \underline{Q}} \cong \ZQ$}.$$ 
Applying $\mathcal{C}^{(-)} = \Fun(-,\mathcal{C})$, we get an inverse system of (stable) $\infty$-categories of representations and restriction functors between them with
$$\mathcal{C}^\ZQ = \varprojlim(\mathcal{C}^Q=\mathcal{C}^{_0 \underline{Q}} \ \xleftarrow{\ \quad} \ \mathcal{C}^{_1 \underline{Q}} \ \xleftarrow{\ \quad} \ \cdots \ \xleftarrow{\ \quad} \ \mathcal{C}^{_n \underline{Q}} \ \xleftarrow{\ \quad} \ \cdots),$$
and the canonical functor from the limit $\mathcal{C}^\ZQ \to \mathcal{C}^{_n \underline{Q}}$ is the restriction along $_n \underline{Q} \subset \ZQ$.
\end{rema}

We will now define inductively mesh $\infty$-categories $\mathcal{C}^{\Gamma,\, \mathrm{mesh}}$ for subquivers $\Gamma$ of $\ZQ$ and reach $\mathcal{C}^\ZQmesh$ in the limit.

\begin{defi}
Let $\Gamma$ be a finite quiver with a mesh $\infty$-category $\mathcal{C}^{\Gamma,\, \mathrm{mesh}}$ and a forgetful functor $\mathcal{C}^{\Gamma,\, \mathrm{mesh}} \to \mathcal{C}^\Gamma$. For $?\in \{+,-\}$ and $v \in \Gamma_0$, we denote $\mathcal{C}^{\Gamma^?(v),\, v\text{-}\mathrm{mesh}}$ the \emph{(local) mesh $\infty$-category} at $v$ obtained from \Cref{def:mesh-subcat}. We define the \emph{(total) mesh $\infty$-category} $\mathcal{C}^{\Gamma^?(v),\, \mathrm{mesh}}$ of $\Gamma^?(v)$ by the following (homotopy) pullback.
\begin{equation} \label{eq:def-total-mesh}
\begin{tikzcd}
\mathcal{C}^{\Gamma^?(v),\, \mathrm{mesh}} \ar[r] \ar[d] \ar[rd,phantom,"\PBho"] & \mathcal{C}^{\Gamma^?(v),\, v\text{-}\mathrm{mesh}} \ar[d] \\[0.5em]
\mathcal{C}^{\Gamma,\, \mathrm{mesh}} \ar[r] & \mathcal{C}^{\Gamma}
\end{tikzcd}
\end{equation}
The composition $\mathcal{C}^{\Gamma^?(v),\, \mathrm{mesh}} \to \mathcal{C}^{\Gamma^?(v),\, v\text{-}\mathrm{mesh}} \to \mathcal{C}^{\Gamma^?(v)}$ gives a new forgetful functor.

In particular, letting $\mathcal{C}^{_0 \underline{Q},\, \mathrm{mesh}} = \mathcal{C}^Q$, the construction \eqref{eq:def-total-mesh} gives inductively mesh $\infty$-categories $\mathcal{C}^{_n \underline{Q},\, \mathrm{mesh}}$, forgetful functors $\mathcal{C}^{_n \underline{Q},\, \mathrm{mesh}} \to \mathcal{C}^{_n \underline{Q}}$ and restriction functors $\mathcal{C}^{_n \underline{Q},\, \mathrm{mesh}} \to \mathcal{C}^{_{n-1} \underline{Q},\, \mathrm{mesh}}$ for all $n\geq 1$.

We define the \emph{mesh $\infty$-category} $\mathcal{C}^\ZQmesh$ to be the (homotopy) limit of mesh $\infty$-infinity categories
$$\mathcal{C}^\ZQmesh = \varprojlim(\mathcal{C}^Q=\mathcal{C}^{_0 \underline{Q},\, \mathrm{mesh}} \xleftarrow{\ \ } \mathcal{C}^{_1 \underline{Q},\, \mathrm{mesh}} \xleftarrow{\ \ } \cdots \xleftarrow{\ \ } \mathcal{C}^{_n \underline{Q},\, \mathrm{mesh}} \xleftarrow{\ \ } \cdots).$$
The forgetful functors $\mathcal{C}^{_n \underline{Q},\, \mathrm{mesh}} \to \mathcal{C}^{_n \underline{Q}}$ induce a forgetful functor $\mathcal{C}^\ZQmesh \to \mathcal{C}^\ZQ$.
\end{defi}

\begin{rema}
\eqref{eq:def-total-mesh} is a pullback in $\infCAT$, and by \Cref{theo:Qpv-mesh,theo:Qmv-mesh}, the right vertical map is an equivalence. Thus $\mathcal{C}^{\Gamma^?(v),\, \mathrm{mesh}} \to \mathcal{C}^{\Gamma,\, \mathrm{mesh}}$ is also an equivalence. In particular, we get equivalences $\mathcal{C}^{_n \underline{Q},\, \mathrm{mesh}} \isoarrow \mathcal{C}^{_{n-1} \underline{Q},\, \mathrm{mesh}}$ for all $n \geq 1$. 
\end{rema}

\begin{theo} \label{theo:ZQ-mesh}
Let $Q$ be a finite acyclic quiver. Restriction along the inclusion $Q \subset \ZQ$ induces a natural equivalence
$\mathcal{C}^\ZQmesh \isoarrow \mathcal{C}^Q$.
\end{theo}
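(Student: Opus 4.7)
The plan is to exploit the fact that the tower defining $\mathcal{C}^\ZQmesh$ is already a tower of equivalences, so its $\varprojlim$ will coincide with any finite level—in particular with $\mathcal{C}^{_0\underline{Q},\,\mathrm{mesh}} = \mathcal{C}^Q$.

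First I would verify that the restriction map $\mathcal{C}^{_n\underline{Q},\,\mathrm{mesh}} \to \mathcal{C}^{_{n-1}\underline{Q},\,\mathrm{mesh}}$ is an equivalence for every $n \geq 1$. By construction, the passage from $_{n-1}\underline{Q}$ to $_n\underline{Q}$ is a finite composition of $(-)^+$ and $(-)^-$ extensions at vertices which—thanks to the admissibility of the ordering $v_1,\dots,v_k$—are sources or sinks of the current extended quiver at the moment they are added. Each such extension induces an equivalence of total mesh $\infty$-categories, as already flagged in the remark immediately preceding the statement: this combines the defining pullback \eqref{eq:def-total-mesh} with the equivalences $\mathcal{C}^{\Gamma^?(v),\,v\text{-}\mathrm{mesh}} \isoarrow \mathcal{C}^\Gamma$ furnished by \Cref{theo:Qpv-mesh,theo:Qmv-mesh}. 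Composing these equivalences across the vertices added at stage $n$ yields the required equivalence.

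Second, since limits in $\infCAT$ preserve pointwise equivalences of diagrams, the projection $p_0 \colon \mathcal{C}^\ZQmesh \to \mathcal{C}^{_0\underline{Q},\,\mathrm{mesh}} = \mathcal{C}^Q$ from the inverse limit of a tower of equivalences to its initial term is itself an equivalence. To identify $p_0$ with restriction along $Q \subset \ZQ$, I would observe that the forgetful functors $\mathcal{C}^{_n\underline{Q},\,\mathrm{mesh}} \to \mathcal{C}^{_n\underline{Q}}$ form a morphism of towers lying over $\{\mathcal{C}^{_n\underline{Q}}\}$, whose limit is $\mathcal{C}^\ZQ$ with $0$-th projection equal to restriction along $Q \subset \ZQ$. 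Passing to limits factors $p_0$ as the forgetful $\mathcal{C}^\ZQmesh \to \mathcal{C}^\ZQ$ followed by this restriction, which is exactly the map in the statement.

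Naturality in $\mathcal{C}$ follows from \Cref{rema:functoriality} applied at each finite stage, together with the functoriality of limits in $\infCAT$. The main technical point is the bookkeeping in the first step—checking that the composition of local mesh equivalences at the various newly added vertices really composes coherently—but this is precisely what the admissible ordering provides, since at each intermediate step the vertex under consideration remains a source or sink of the current quiver, so \Cref{theo:Qpv-mesh,theo:Qmv-mesh} apply as stated.
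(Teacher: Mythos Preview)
Your proof is correct and follows essentially the same approach as the paper, which simply observes that the restriction map is the transfinite op-composition of the tower of equivalences $\mathcal{C}^{_n\underline{Q},\,\mathrm{mesh}} \isoarrow \mathcal{C}^{_{n-1}\underline{Q},\,\mathrm{mesh}}$ and is therefore itself an equivalence. One minor point: \Cref{theo:Qpv-mesh,theo:Qmv-mesh} apply to \emph{any} vertex $v$, not only sources or sinks, so the source/sink condition you invoke is unnecessary for the stage-wise equivalences---admissibility of the ordering is only used so that the knitting construction actually recovers $\ZQ$.
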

\begin{proof}
The functor $\mathcal{C}^\ZQmesh \to \mathcal{C}^Q$ is the transfinite op-composition in $\infCAT$ of the countable sequence of equivalences
\begin{equation*} 
\mathcal{C}^Q=\mathcal{C}^{_0 \underline{Q},\, \mathrm{mesh}} \xleftarrow{\ \simeq\ } \mathcal{C}^{_1 \underline{Q},\, \mathrm{mesh}} \xleftarrow{\ \simeq\ } \cdots \xleftarrow{\ \simeq\ } \mathcal{C}^{_n \underline{Q},\, \mathrm{mesh}} \xleftarrow{\ \simeq\ } \cdots
\end{equation*}
Thus it is also an equivalence.
\end{proof}

\begin{rema}
There is an analogy between the above construction and the Auslander algebra $\mathsf{\Lambda}_Q$ of a Dynkin quiver. The Gabriel quiver of $\mathsf{\Lambda}_Q$ is $\Gamma(\mathsf{mod}\, kQ)^\mathrm{op}$ and there is a natural equivalence $\mathsf{mod}\, kQ \simeq \mathsf{proj}\, \mathsf{\Lambda}_Q$ (see \cite[sec. VI.5]{AusReiSma97}).
\end{rema}

Now we give a particularly useful description of the mesh $\infty$-category of $\ZQ$: it can be also described as pairs $(X,(c_{\tau^nv})_{\tau^nv\in\ZQ})$ consisting of a representation $X:\ZQ \to \mathcal{C}$ and, for each mesh of $\ZQ$ (corresponding to a vertex $\tau^nv$), a cofiber sequence 
$$\begin{tikzcd} X_{\tau^nv} \ar[r] \dar &[-0.8em] \bigoplus\limits_{\tau^nv\to w} X_w \ar[d] \\[-0.3em] 0 \rar & X_{\tau^{n-1}v} \end{tikzcd}$$
realizing the value of $X$ at the vertex $\tau^{n-1}v$ as the cofiber of $X_{\tau^nv} \to \bigoplus\limits_{\tau^nv\to w} X_w$.

\begin{prop} \label{prop:desc-ZQmesh}
Let $Q$ be a finite acyclic quiver. There is a (homotopy) pullback 
\begin{equation} \label{eq:ZQ-mesh}
\begin{tikzcd}
\mathcal{C}^\ZQmesh \ar[r] \ar[d] \ar[rd,phantom, "\PBho",pos=0.55]  & \prod\limits_{\tau^nv}\mathcal{C}^{\square,\, \mathrm{cof}} \ar[d,"(l^*)_{\tau^nv}"] \\[0.5em]
\mathcal{C}^\ZQ \ar[r] & \prod\limits_{\tau^nv}\mathcal{C}^{\Lambda^2_1},
\end{tikzcd} 
\end{equation}
where the functor $\mathcal{C}^\ZQ \to \prod\limits_{\tau^nv}\mathcal{C}^{\Lambda^2_1}$ sends $X \mapsto (X_{\tau^nv} \to \textstyle\bigoplus\limits_{\tau^nv \to w} X_w \to X_{\tau^{n-1} v})_{\tau^nv}$.
\end{prop}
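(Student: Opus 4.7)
The plan is to prove the result by first establishing its finite analog at each stage of the knitting construction \ref{cons:knitting} and then passing to the (homotopy) limit. For a finite subquiver $\Gamma$ of $\ZQ$ arising from iterated reflections starting at $Q$, let $M(\Gamma) \subseteq \Gamma_0$ denote the set of starting points of meshes fully contained in $\Gamma$, i.e.\ the vertices successively adjoined by the reflections. The auxiliary claim is: for every such $\Gamma$ there is a (homotopy) pullback
$$
\begin{tikzcd}
\mathcal{C}^{\Gamma,\, \mathrm{mesh}} \ar[r] \ar[d] \ar[rd, phantom, "\PBho"] & \prod\limits_{u\in M(\Gamma)}\mathcal{C}^{\square,\, \mathrm{cof}} \ar[d] \\[.5em]
\mathcal{C}^{\Gamma} \ar[r] & \prod\limits_{u\in M(\Gamma)}\mathcal{C}^{\Lambda^2_1}
\end{tikzcd}
$$
whose lower horizontal map evaluates the mesh starting at each $u \in M(\Gamma)$.

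I would prove the auxiliary claim by induction on the number of reflections, the base case $\Gamma=Q$ being trivial as $M(Q)=\varnothing$ and $\mathcal{C}^{Q,\, \mathrm{mesh}}=\mathcal{C}^Q$. For the inductive step, decompose each knitting transition ${_{n-1}\underline{Q}} \rightsquigarrow {_n\underline{Q}}$ into one-vertex reflections $\Gamma \rightsquigarrow \Gamma^?(v)$, each adjoining a single new mesh starting at some $v \in \Gamma_0$. By the defining pullback \eqref{eq:def-total-mesh} of $\mathcal{C}^{\Gamma^?(v),\, \mathrm{mesh}}$ and the local mesh pullback from \Cref{def:mesh-subcat}, one has
$$\mathcal{C}^{\Gamma^?(v),\, \mathrm{mesh}} \simeq \bigl(\mathcal{C}^{\Gamma^?(v)} \times_{\mathcal{C}^{\Lambda^2_1}} \mathcal{C}^{\square,\,\mathrm{cof}}\bigr) \times_{\mathcal{C}^{\Gamma}} \mathcal{C}^{\Gamma,\, \mathrm{mesh}}.$$
Substituting the inductive hypothesis on $\mathcal{C}^{\Gamma,\, \mathrm{mesh}}$ and pasting homotopy pullbacks in $\infCAT$ (using \Cref{prop:hoPB} and that limits commute with limits) reorganizes this iterated pullback into the pullback of $\mathcal{C}^{\Gamma^?(v)}$ over $\prod_{M(\Gamma^?(v))}\mathcal{C}^{\Lambda^2_1}$, since $M(\Gamma^?(v)) = M(\Gamma) \sqcup \{v\}$.

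To conclude, pass to the limit along the tower of finite reflections. Since $\ZQ = \bigcup_n {_n\underline{Q}}$ and limits commute with limits, one obtains $\mathcal{C}^{\ZQmesh} \simeq \varprojlim_n \mathcal{C}^{_n\underline{Q},\, \mathrm{mesh}}$, $\mathcal{C}^\ZQ \simeq \varprojlim_n \mathcal{C}^{_n\underline{Q}}$, and $\prod_{\tau^n v} \mathcal{C}^{\square,\, \mathrm{cof}} \simeq \varprojlim_n \prod_{M({_n\underline{Q}})} \mathcal{C}^{\square,\, \mathrm{cof}}$ (similarly for the $\mathcal{C}^{\Lambda^2_1}$-factor). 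Applying $\varprojlim_n$ to the auxiliary squares then produces the desired pullback. The main technical hurdle is the diagram-chase in the inductive step: one must verify that the two evaluation maps landing in $\prod_{M(\Gamma^?(v))}\mathcal{C}^{\Lambda^2_1}$ agree when assembled from the local mesh pullback and the inductive pullback, which amounts to checking that the restriction $\mathcal{C}^{\Gamma^?(v)} \to \mathcal{C}^{\Lambda^2_1}$ appearing in the local construction is compatible, via the forgetful functor $\mathcal{C}^{\Gamma,\, \mathrm{mesh}} \to \mathcal{C}^{\Gamma}$, with the evaluations at the older meshes indexed by $M(\Gamma)$.
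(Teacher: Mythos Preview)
Your proposal is correct and follows essentially the same strategy as the paper: establish the finite analog by induction along the knitting construction and then take the limit. The only difference is presentational: where you describe the inductive step as ``pasting homotopy pullbacks'' and a compatibility check, the paper isolates this step as a clean categorical lemma (given three pullback squares in a specific L-shaped configuration with apex $c$, one obtains a single pullback of $p$ over $c$ against the product $e_1\times e_2 \to d_1\times d_2$), which makes the reorganization of the iterated fiber product into a pullback over the enlarged product completely transparent and avoids the diagram chase you flag as the main hurdle.
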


The proof uses the following purely categorical lemma about pullbacks.

\begin{lemm}
Consider the following diagram consisting of three pullback squares in an arbitrary $\infty$-category with finite limits:
$$\begin{tikzcd}
p \arrow[r, "\varepsilon_1"] \arrow[d, "\varepsilon_2"'] \ar[rd,phantom, "\lrcorner", very near start] & p_1 \arrow[d, "\alpha_1"'] \arrow[r, "\delta_1"] \ar[rd,phantom, "\lrcorner", very near start] & e_1 \arrow[d, "\beta_1"] \\
p_2 \arrow[r, "\alpha_2"] \arrow[d, "\delta_2"'] \ar[rd,phantom, "\lrcorner", very near start] & c \arrow[r, "\gamma_1"'] \arrow[d, "\gamma_2"] & d_1 \\
e_2 \arrow[r, "\beta_2"] & d_2. &
\end{tikzcd}$$
Then the following square is also a pullback:
$$\begin{tikzcd}
p \ar[r,"{(\varepsilon_1\delta_1,\varepsilon_2,\delta_2)}"] \ar[d,"\alpha_1\varepsilon_1"'] \ar[rd,phantom, "\lrcorner", very near start] &[1em] e_1\times e_2 \ar[d,"\beta_1\times\beta_2"] \\
c \ar[r,"{(\gamma_1,\gamma_2)}"'] & d_1\times d_2.
\end{tikzcd}$$
\end{lemm}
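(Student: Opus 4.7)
\medskip

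My plan is to identify both $p$ and the claimed outer pullback with the limit of the single $W$-shaped diagram
$$W\colon\quad e_1 \xrightarrow{\ \beta_1\ } d_1 \xleftarrow{\ \gamma_1\ } c \xrightarrow{\ \gamma_2\ } d_2 \xleftarrow{\ \beta_2\ } e_2,$$
and conclude by uniqueness of limits. This is the cleanest route in an $\infty$-category with finite limits, since it avoids any element-chasing and lets coherence data take care of itself.

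First, I show that $p$ computes $\lim W$. The top-right pullback realizes $p_1$ as $\lim(e_1 \to d_1 \leftarrow c)$, and the bottom-left pullback realizes $p_2$ as $\lim(c \to d_2 \leftarrow e_2)$. The top-left pullback then gives $p \simeq p_1 \times_c p_2$. Since the two cospans for $p_1$ and $p_2$ glue along their common vertex $c$ to form exactly $W$, standard manipulation of iterated limits (alternatively, two applications of the pasting lemma for pullbacks) shows $p \simeq \lim W$.

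Second, I show that $c \times_{d_1 \times d_2}(e_1 \times e_2)$ also computes $\lim W$. For any test object $T$, the universal property of the product $d_1 \times d_2$ says that giving a map $T \to d_1 \times d_2$ is giving a pair of maps to $d_1$ and $d_2$; similarly for $e_1 \times e_2$. Hence a cone over the cospan $e_1 \times e_2 \to d_1 \times d_2 \leftarrow c$ is the same data as a cone over $W$: the single compatibility homotopy at $d_1 \times d_2$ decomposes coherently into one at $d_1$ and one at $d_2$. Thus the outer pullback corepresents the functor \enquote{cones over $W$}, and so equals $\lim W$.

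Combining these two identifications produces the desired equivalence $p \simeq c \times_{d_1 \times d_2} (e_1 \times e_2)$, and one checks that the comparison map is given, on the $e_1 \times e_2$ factor, by the pair $(\delta_1\varepsilon_1,\delta_2\varepsilon_2)$ and, on the $c$ factor, by $\alpha_1\varepsilon_1 \simeq \alpha_2\varepsilon_2$, matching the statement. The main obstacle is purely bookkeeping: one must verify that the two decompositions of $\lim W$ (via iterated pullbacks on the one side, via folding through products on the other) are compatible up to the canonical natural transformations, but this is formal once one works at the level of limit cones rather than objects.
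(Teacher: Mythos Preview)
Your argument is correct, and it takes a genuinely different route from the paper's own proof. You identify both $p$ and the outer pullback with $\lim W$ for the $W$-shaped diagram $e_1 \to d_1 \leftarrow c \to d_2 \leftarrow e_2$, and conclude by uniqueness of limits. The paper instead factors the desired square through an intermediate vertex $p_1 \times p_2$ over $c \times c$: it observes that the right-hand square
\[
\begin{tikzcd}
p_1\times p_2 \arrow[r,"\delta_1\times\delta_2"] \arrow[d,"\alpha_1\times\alpha_2"'] & e_1\times e_2 \arrow[d,"\beta_1\times\beta_2"] \\
c\times c \arrow[r,"\gamma_1\times\gamma_2"'] & d_1\times d_2
\end{tikzcd}
\]
is a product of pullbacks, hence a pullback, and then uses two lemmas from \cite{Lur25} to convert the pullback $p = p_1 \times_c p_2$ first into an equalizer of $p_1 \times p_2 \rightrightarrows c$ and then into the pullback of $p_1 \times p_2 \to c \times c \leftarrow c$ along the diagonal; pasting finishes the argument. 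Your approach is more conceptual and avoids the equalizer detour entirely, at the cost of the bookkeeping you flag (checking that the comparison map really is the one named in the statement). The paper's approach buys you explicit citable references for each step and keeps everything at the level of pullback squares rather than more general limit shapes.
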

\begin{proof}
It follows by the following pasting of pullbacks:
$$\begin{tikzcd}
p \arrow[r, "{(\varepsilon_1,\varepsilon_2)}"] \arrow[d, "\alpha_1\varepsilon_2"'] \ar[rd,phantom, "\lrcorner", very near start] & p_1\times p_2 \arrow[d, "\alpha_1\times\alpha_2"'] \arrow[r, "\delta_1\times\delta_2"] \ar[rd,phantom, "\lrcorner", very near start] & e_1\times e_2 \arrow[d, "\beta_1\times\beta_2"] \\
c \arrow[r, "{(1,1)}"']                                                             & c\times c \arrow[r, "\gamma_1\times\gamma_2"']                                         & d_1\times d_2.                                  
\end{tikzcd}$$
The right one is a product of pullbacks. For the left one: denoting $\pi_i:p_1\times p_2 \to p_i$ the projections, the upper-left pullback in the hypothesis can be turned into an equalizer
$$\begin{tikzcd}
p \arrow[r, "{(\varepsilon_1,\varepsilon_2)}"] & p_1\times p_2 \arrow[r, "\varepsilon_1\pi_1", shift left] \arrow[r, "\varepsilon_2\pi_2"', shift right] & c
\end{tikzcd}$$
by \cite[Proposition 7.6.4.23]{Lur25}, and this equalizer can be turned into the desired pullback by \cite[Proposition 7.6.4.22]{Lur25}.
\end{proof}

\begin{proof}[Proof {\normalfont{(of \Cref{prop:desc-ZQmesh})}}]
Let $\Gamma$ be the quiver resulting from $Q$ after adding the $n^\text{th}$ mesh of $\ZQ$ and assume there is a (homotopy) pullback 
$$\begin{tikzcd}
\mathcal{C}^{\Gamma,\, \mathrm{mesh}} \ar[r] \ar[d] \ar[rd,phantom, "\PBho",pos=0.6]  & (\mathcal{C}^{\square,\, \mathrm{cof}})^n \ar[d,"(l^*)_i"] \\[0.5em]
\mathcal{C}^\Gamma \ar[r] & (\mathcal{C}^{\Lambda^2_1})^n.
\end{tikzcd}$$
Adding the $(n+1)^\text{th}$ mesh, we have the following diagram of (homotopy) pullbacks
$$\begin{tikzcd}
{\mathcal{C}^{\Gamma^?(v),\, \mathrm{mesh}}} \arrow[d] \arrow[r] \ar[rd,phantom, "\PBho",pos=0.58] & {\mathcal{C}^{\Gamma^?(v),\, v\text{-}\mathrm{mesh}}} \arrow[d] \arrow[r] \ar[rd,phantom, "\PBho",pos=0.5] & {\mathcal{C}^{\square,\, \mathrm{cof}}} \arrow[d] \\
{\mathcal{C}^{\Gamma^?(v),\, \Gamma\text{-}\mathrm{mesh}}} \arrow[d] \arrow[r] \ar[rd,phantom, "\PBho",pos=0.5] & \mathcal{C}^{\Gamma^?(v)} \arrow[d] \arrow[r] & \mathcal{C}^{\Lambda^2_1} \\
{\mathcal{C}^{\Gamma,\, \mathrm{mesh}}} \arrow[r] \arrow[d] \ar[rd,phantom, "\PBho",pos=0.6] & \mathcal{C}^{\Gamma} \arrow[d] & \\
{(\mathcal{C}^{\square,\, \mathrm{cof}})^n} \arrow[r] & (\mathcal{C}^{\Lambda^2_1})^n &
\end{tikzcd}$$
where we introduce the notation $\mathcal{C}^{\Gamma^?(v),\, \Gamma\text{-}\mathrm{mesh}}$ for the pullback $\mathcal{C}^{\Gamma^?(v)} \times_{\mathcal{C}^\Gamma} \mathcal{C}^{\Gamma,\, \mathrm{mesh}}$.
By the previous lemma, we get a (homotopy) pullback square 
$$\begin{tikzcd}
\mathcal{C}^{\Gamma^?(v),\, \mathrm{mesh}} \ar[r] \ar[d] \ar[rd,phantom, "\PBho",pos=0.6]  & (\mathcal{C}^{\square,\, \mathrm{cof}})^{n+1} \ar[d,"(l^*)_i"] \\[0.5em]
\mathcal{C}^{\Gamma^?(v)} \ar[r] & (\mathcal{C}^{\Lambda^2_1})^{n+1}.
\end{tikzcd}$$
We know such a (homotopy) pullback exists for $n=0$ by definition (\Cref{def:mesh-subcat}). Therefore by induction, it exists for all $n\geq 0$. The limit of all these pullback is again a pullback, and it is precisely that of the statement. 
\end{proof}

\begin{coro} \label{coro:equiv-repet} 
Let $Q$ and $Q'$ be finite acyclic quivers. Any translation isomorphism of  repetitive quivers $f:\ZQ \cong \ZQ'$ induces a stable equivalence
$\tilde{f^*}:\mathcal{C}^{Q'} \simeq \mathcal{C}^Q$, i.e. a natural equivalence for all $\mathcal{C}$ stable $\infty$-categories.
\end{coro}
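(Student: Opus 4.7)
The plan is to assemble the equivalence by combining Theorem \ref{theo:ZQ-mesh} for both $Q$ and $Q'$ with a straightforward equivalence $f^{*}\colon \mathcal{C}^{\ZZQ',\,\mathrm{mesh}} \xrightarrow{\simeq} \mathcal{C}^{\ZQmesh}$ obtained by restriction along $f$. Concretely, I would build the zig-zag
\begin{equation*}
\mathcal{C}^{Q'} \xleftarrow{\ \simeq\ } \mathcal{C}^{\mathbb{Z}Q',\,\mathrm{mesh}} \xrightarrow{f^{*},\ \simeq} \mathcal{C}^{\ZQmesh} \xrightarrow{\ \simeq\ } \mathcal{C}^{Q},
\end{equation*}
where the outer arrows are the restrictions from \Cref{theo:ZQ-mesh} and the middle arrow is the restriction along $f$.

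The key step is to verify that restriction along $f$ sends mesh representations to mesh representations, i.e. that it restricts to an equivalence between the mesh $\infty$-categories. For this I would invoke the pullback description of \Cref{prop:desc-ZQmesh}. Since $f$ is an isomorphism of translation quivers, it induces a bijection between the meshes of $\ZQ$ and those of $\mathbb{Z}Q'$ (via $\tau v \to w_1,\dots,w_n \to v \ \longmapsto\ f(\tau v) = \tau f(v) \to f(w_1),\dots,f(w_n) \to f(v)$). Under this bijection, the ordinary restriction $f^{*}\colon \mathcal{C}^{\mathbb{Z}Q'} \xrightarrow{\simeq} \mathcal{C}^{\ZQ}$ is compatible with the two cospans
\begin{equation*}
\mathcal{C}^{\mathbb{Z}Q'} \longrightarrow \prod_{\text{meshes of }\mathbb{Z}Q'}\mathcal{C}^{\Lambda^2_1} \longleftarrow \prod_{\text{meshes of }\mathbb{Z}Q'}\mathcal{C}^{\square,\,\mathrm{cof}}
\end{equation*}
and its analogue for $\ZQ$. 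By the universal property of the pullback \eqref{eq:ZQ-mesh}, $f^{*}$ then lifts to an equivalence $\mathcal{C}^{\mathbb{Z}Q',\,\mathrm{mesh}} \xrightarrow{\simeq} \mathcal{C}^{\ZQmesh}$, and composing yields the desired $\tilde{f^{*}}\colon \mathcal{C}^{Q'} \xrightarrow{\simeq} \mathcal{C}^{Q}$.

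For naturality in $\mathcal{C}$ (in the sense of \Cref{def:stable-eq}), I would observe that each ingredient in the construction is functorial with respect to exact functors $g\colon \mathcal{C} \to \mathcal{D}$: restriction along any map of simplicial sets commutes with post-composition by $g_{*}$ strictly, and the formation of the pullbacks in \eqref{eq:ZQ-mesh} and \eqref{eq:def-total-mesh} is natural with respect to $g$ by the same reasoning as in \Cref{rema:functoriality}. Thus the square witnessing naturality in \Cref{def:stable-eq} decomposes into squares that commute on the nose (or up to canonical equivalence), giving a homotopy commutative outer square.

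The only genuine content beyond \Cref{theo:ZQ-mesh} is the check in the middle paragraph that \emph{translation isomorphism}, not just quiver isomorphism, is what makes $f^{*}$ respect the mesh structure; the main obstacle is simply being careful about what data a translation isomorphism carries, namely the compatibility $f\circ \tau = \tau\circ f$ together with the compatibility of polarizations, which is exactly what guarantees that the indexing bijection above is well-defined and intertwines the two cospans.
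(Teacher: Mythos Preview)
Your proposal is correct and follows essentially the same approach as the paper: both build the zig-zag $\mathcal{C}^{Q'} \xleftarrow{\simeq} \mathcal{C}^{\mathbb{Z}Q',\,\mathrm{mesh}} \cong \mathcal{C}^{\ZQmesh} \xrightarrow{\simeq} \mathcal{C}^{Q}$ using \Cref{theo:ZQ-mesh} on the ends and an isomorphism of the cospans from \Cref{prop:desc-ZQmesh} (induced by the mesh bijection coming from $f$ being a translation isomorphism) to identify the pullbacks in the middle. Your naturality argument also matches the paper's, invoking the functoriality of the mesh construction in exact functors and the fact that the equivalence of \Cref{theo:ZQ-mesh} factors as forgetful followed by restriction.
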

\begin{proof}
Let $f: \ZQ \xrightarrow{\, \cong\, } \ZQ'$ be a translation isomorphism. This induces an isomorphism $f^*:\mathcal{C}^{\ZQ'} \xrightarrow{\, \cong\, } \mathcal{C}^\ZQ$ and a correspondence between meshes of $\ZQ$ and of $\ZQ'$ that we can express as an isomorphism $\rho: \prod_{\tau^n f(v)} \mathcal{C}^{\Lambda^2_1} \xrightarrow{\cong} \prod_{\tau^n v} \mathcal{C}^{\Lambda^2_1}$. More precisely, the identity maps between the copy of $\mathcal{C}^{\Lambda^2_1}$ indexed by $\tau^n f(v) \in \ZQ'$ and the copy indexed by $\tau^n v\in\ZQ$ induce an isomorphism $\rho: \prod_{\tau^n f(v)} \mathcal{C}^{\Lambda^2_1} \xrightarrow{\cong} \prod_{\tau^n v} \mathcal{C}^{\Lambda^2_1}$. Similarly, one has an isomorphism $\rho': \prod_{\tau^n f(v)} \mathcal{C}^{\square,\, \mathrm{cof}} \xrightarrow{\cong} \prod_{\tau^n v} \mathcal{C}^{\square,\, \mathrm{cof}}$, and there is an isomorphism of cospans
$$\begin{tikzcd}
\mathcal{C}^{\ZQ'} \rar \ar[d,"f^*"',"\cong"] & \prod_{\tau^n f(v)} \mathcal{C}^{\Lambda^2_1} \ar[d,"\rho"',"\cong"] & \prod_{\tau^n f(v)} \mathcal{C}^{\square,\, \mathrm{cof}} \ar[d,"\rho'"',"\cong"] \lar \\
\mathcal{C}^{\ZQ} \rar & \prod_{\tau^n v} \mathcal{C}^{\Lambda^2_1} & \prod_{\tau^n v} \mathcal{C}^{\square,\, \mathrm{cof}}. \lar
\end{tikzcd}$$
By the pullback \eqref{eq:ZQ-mesh}, this induces an isomorphism $\mathcal{C}^{\ZQ',\, \mathrm{mesh}} \cong \mathcal{C}^\ZQmesh$, and composing with the equivalences from \Cref{theo:ZQ-mesh}, we get the desired equivalence:
\begin{equation*}
\mathcal{C}^{Q'} \xleftarrow{\ \simeq\ } \mathcal{C}^{\ZQ',\, \mathrm{mesh}} \cong \mathcal{C}^{\ZQmesh} \xrightarrow{\ \simeq\ } \mathcal{C}^Q.
\end{equation*}

For the naturality, we check that given an exact functor $F:\mathcal{C} \to \mathcal{D}$, the induced square 
$$\begin{tikzcd}
\mathcal{C}^Q \ar[r,"\tilde{f^*}"] \ar[d,"F_*"'] & \mathcal{C}^{Q'} \ar[d,"F_*"] \\
\mathcal{D}^Q \ar[r,"\tilde{f^*}"] & \mathcal{D}^{Q'}
\end{tikzcd}$$
commutes in $\infCAT$. Similarly to \Cref{rema:functoriality}, $(-)^\ZQmesh$ is functorial with respect to exact functors, and the naturality of pullbacks gives a commutative square 
$$\begin{tikzcd}
\mathcal{C}^\ZQmesh \ar[r,"\tilde{f^*}"] \ar[d,"F_*"'] & \mathcal{C}^{\ZQ',\, \mathrm{mesh}} \ar[d,"F_*"] \\
\mathcal{D}^\ZQmesh \ar[r,"\tilde{f^*}"] & \mathcal{D}^{\ZQ',\, \mathrm{mesh}}
\end{tikzcd}$$
in $\infCAT$.
Finally, one just needs to observe that the equivalence $\mathcal{C}^\ZQmesh \xrightarrow{\, \simeq\, } \mathcal{C}^Q$ is natural with respect to exact functors, as it is the composition of the forgetful functor $\mathcal{C}^\ZQmesh \to \mathcal{C}^\ZQ$ and the restriction $\mathcal{C}^\ZQ \to \mathcal{C}^Q$.
\end{proof}

\begin{rema}
We already knew from \Cref{theo:equiv-quivers} that $Q$ and $Q'$ are stably equivalent if and only if $\ZQ \cong \ZQ'$ as translation quivers. However, it is interesting to note that here we get a concrete equivalence $\tilde{f^*}:\mathcal{C}^{Q'} \simeq \mathcal{C}^Q$ from $f:\ZQ \cong \ZQ'$.
\end{rema}

We end this section by relating mesh representations of $\ZQ$ with exact functors from the derived category $\Dd{b}{kQ}$ with $k$ a field. This is done by fixing an inclusion $\Gamma_Q \xhookrightarrow{} \Dd{b}{kQ}$ of the Auslander-Reiten quiver, which amounts to choosing representatives for each iso-class of indecomposable objects and a basis of the space of irreducible morphisms between each two of them. Identifying $\ZQ^\mathrm{op}$ with the connected component of $\Gamma_Q$ containing the indecomposable projectives, we get an inclusion 
$$\iota:\ZQ \subset \Gamma_Q^\mathrm{op} \xhookrightarrow{ \ \ \ } \Dd{b}{kQ}^\mathrm{op}.$$

\begin{lemm}
There is a canonical functor $\tilde{\iota^*}: \Fun^\mathsf{ex}(\Dd{b}{kQ}^\mathrm{op},\mathcal{C}) \to \mathcal{C}^\ZQmesh$ mapping each exact functor to its restriction to the AR quiver, i.e. such that
$$\begin{tikzcd} \Fun^\mathsf{ex}(\Dd{b}{kQ}^\mathrm{op},\mathcal{C}) \ar[rr,"\iota^*"] \ar[dr,"\tilde{\iota^*}"'] & & \mathcal{C}^{\ZQ} \\
& \mathcal{C}^\ZQmesh \ar[ur,"\mathrm{forget}"'] &
\end{tikzcd}$$
commutes in $\infCAT$.
\end{lemm}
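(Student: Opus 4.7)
The strategy is to invoke the pullback description of the mesh $\infty$-category from \Cref{prop:desc-ZQmesh}. By the universal property, constructing $\tilde{\iota^*}$ with the required commutativity reduces to providing three pieces of data: (i) the composition $\Fun^\mathsf{ex}(\Dd{b}{kQ}^\mathrm{op},\mathcal{C}) \to \mathcal{C}^\ZQ$, for which I will simply take $\iota^*$ (this makes the triangle in the statement commute by construction); (ii) a functor into $\prod_{\tau^n v} \mathcal{C}^{\square,\, \mathrm{cof}}$ assigning a cofiber square in $\mathcal{C}$ to each mesh of $\ZQ$; and (iii) a natural equivalence witnessing that the two resulting compositions into $\prod_{\tau^n v} \mathcal{C}^{\Lambda^2_1}$ agree.

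The heart of the argument is (ii). I plan to extend $\iota$ to a single functor $\tilde{\iota}: \widetilde{\ZQ} \to \Dd{b}{kQ}^\mathrm{op}$, where $\widetilde{\ZQ}$ is the shape obtained from $\ZQ$ by completing every mesh $\tau^n v \to \bigoplus E_i \to \tau^{n-1}v$ (a $\Lambda^2_1$ in $\ZQ$) to a full commutative square by adjoining a new ``zero-corner'' vertex together with the two structural arrows. Since $\Dd{b}{kQ}^\mathrm{op}$ is a stable $\infty$-category, each morphism $\tau^n v \to \bigoplus E_i$ extends canonically, via the fully faithful Kan-extension construction of \Cref{cons:cof-fib}, to a cofiber square with zero in the bottom-left corner; by \Cref{fact:Happel1}(2) the meshes of $\ZQ$ correspond to Auslander-Reiten triangles in $\Dd{b}{kQ}$, so the canonically produced cofiber is equivalent to the existing vertex $\tau^{n-1}v$, keeping the extension consistent with $\iota$. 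Iterating mesh by mesh, in the spirit of the knitting procedure from \Cref{cons:knitting}, assembles the desired $\tilde{\iota}$. Precomposing an exact $F$ with $\tilde{\iota}$ then yields a representation of $\widetilde{\ZQ}$ whose square subdiagrams are cofiber squares (as $F$ is exact), and restricting to those squares gives (ii). For (iii), both compositions into $\prod \mathcal{C}^{\Lambda^2_1}$ pick out the same mesh $\Lambda^2_1$-subdiagrams of $F \circ \tilde{\iota}$---either via the inclusion $\Lambda^2_1 \hookrightarrow \ZQ \subset \widetilde{\ZQ}$ or via $l: \Lambda^2_1 \hookrightarrow \square \subset \widetilde{\ZQ}$---so the required homotopy is essentially tautological.

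The principal obstacle is realizing $\tilde{\iota}$ as a single coherent functor of $\infty$-categories, rather than merely a family of cofiber-square lifts that are compatible up to non-canonical homotopies. I expect to handle this by transfinite induction along the knitting tower $\ZQ = \bigcup_n {_n\underline{Q}}$: at each step one uses the canonical cofiber-extension functor to adjoin a new mesh and invokes \Cref{fact:Happel1}(2) to identify the produced cofiber with the label prescribed by $\ZQ$, so that the next step can proceed. In a more uniform form, one can alternatively observe that precomposition with $\iota$ combined with the canonical cofiber operation in any stable target $\mathcal{C}$ already lands inside $\mathcal{C}^\ZQmesh$, with the role of \Cref{fact:Happel1}(2) reduced to ensuring that these canonically computed cofibers agree with the values of $F \circ \iota$ on the vertices already present---so that no additional data is needed beyond the exact functor $F$ itself.
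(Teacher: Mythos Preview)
Your overall strategy---invoke the pullback description of \Cref{prop:desc-ZQmesh} and supply the three pieces of data (i)--(iii)---is exactly what the paper does. The difference lies in how you produce (ii). You attempt to build a single coherent functor $\tilde{\iota}: \widetilde{\ZQ} \to \Dd{b}{kQ}^\mathrm{op}$ by transfinite knitting, and correctly flag the coherence of this construction as the main obstacle. The paper sidesteps this entirely: because the target of (ii) is a \emph{product} $\prod_{\tau^n v} \mathcal{C}^{\square,\,\mathrm{cof}}$, the data can be supplied mesh by mesh with no mutual compatibility required. For each mesh at $M$, the corresponding Auslander--Reiten triangle is already a cofiber sequence in $\Dd{b}{kQ}$ (\Cref{fact:Happel1}(2)), hence a single diagram $c:\square \to \Dd{b}{kQ}^\mathrm{op}$, and precomposition $c^*$ with an exact $F$ lands in $\mathcal{C}^{\square,\,\mathrm{cof}}$ automatically. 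The commutativity (iii) then holds meshwise. So your ``principal obstacle'' dissolves once you notice that no global $\widetilde{\ZQ}$ is needed; the product structure absorbs all coherence issues.
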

\begin{proof}
Identify through $\iota$ vertices and arrows of $\ZQ$ with objects and morphisms of $\Dd{b}{kQ}$. A mesh of $\ZQ$ starting at an indecomposable object $M$ of $\Dd{b}{kQ}$ gives rise to an AR triangle and hence a cofiber sequence of the form $M \to \bigoplus_{i=1}^n E_i \to \tau^-M$, which defines a diagram $c:\square \to \Dd{b}{kQ}^\mathrm{op}$. This produces a commutative diagram
$$\begin{tikzcd}
\Fun^\mathsf{ex}(\Dd{b}{kQ}^\mathrm{op},\mathcal{C}) \ar[r,"c^*"] \ar[d,"\iota^*"'] & \mathcal{C}^{\square,\, \mathrm{cof}} \ar[d,"l^*"] \\
\mathcal{C}^\ZQ \ar[r] & \mathcal{C}^{\Lambda^2_1}
\end{tikzcd}$$
for each mesh of $\ZQ$. Then the universal property of the homotopy pullback \eqref{eq:ZQ-mesh} gives the desired functor $\tilde{\iota^*}: \Fun^\mathsf{ex}(\Dd{b}{kQ}^\mathrm{op},\mathcal{C}) \to \mathcal{C}^\ZQmesh$.
\end{proof}

\begin{coro} \label{rema:Rhom}
The functor $s: \Dd{b}{kQ} \xrightarrow{ } \Dd{b}{k}^{\ZQ},\ X \xmapsto{ } \mathbb{R}\iHom(-,X),$ from \ref{subsec:ARquiver} factors through $\Dd{b}{k}^\ZQmesh$, and this gives a homotopy inverse of the equivalence $\varphi: \Dd{b}{k}^\ZQmesh \isoarrow \Dd{b}{kQ}$ from \Cref{theo:ZQ-mesh}.
\end{coro}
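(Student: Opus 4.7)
My plan is to obtain the factorization directly from the previous lemma applied to a particular exact functor, and then check the one-sided inverse relation using the projective/Yoneda identification; since $\varphi$ is already known to be an equivalence by \Cref{theo:ZQ-mesh}, exhibiting any right inverse up to homotopy suffices.

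First, I would observe that $\mathbb{R}\iHom(-,X):\Dd{b}{kQ}^\mathrm{op}\to\Dd{b}{k}$ is exact in the first variable, so $s$ naturally factors as
$$\Dd{b}{kQ} \xrightarrow{\ y\ } \Fun^\mathsf{ex}(\Dd{b}{kQ}^\mathrm{op},\Dd{b}{k}) \xrightarrow{\ \iota^*\ } \Dd{b}{k}^\ZQ,$$
where $y(X)=\mathbb{R}\iHom(-,X)$. The previous lemma refines the restriction $\iota^*$ to a functor $\tilde{\iota^*}$ into $\Dd{b}{k}^\ZQmesh$ whose composition with the forgetful functor recovers $\iota^*$. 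Thus $\tilde{\iota^*}\circ y$ is a lift of $s$ through $\Dd{b}{k}^\ZQmesh$, establishing the first half of the statement.

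Next, I would verify $\varphi\circ(\tilde{\iota^*}\circ y)\simeq \mathrm{id}_{\Dd{b}{kQ}}$. By \Cref{theo:ZQ-mesh}, $\varphi$ is the composition of the forgetful functor $\Dd{b}{k}^\ZQmesh\to\Dd{b}{k}^\ZQ$ with restriction along $Q\subset\ZQ$, and by the commuting triangle of the previous lemma, the composite $\varphi\circ\tilde{\iota^*}\circ y$ sends $X\in\Dd{b}{kQ}$ to the representation $v\mapsto \mathbb{R}\iHom(\iota(v),X)$ on $Q$. Since $\iota:\ZQ\subset\Gamma_Q^\mathrm{op}\hookrightarrow\Dd{b}{kQ}^\mathrm{op}$ sends the vertex $v\in Q_0$ to the indecomposable projective $P(v)$, under the equivalence $\Dd{b}{kQ}\simeq\Dd{b}{k}^Q$ corresponding to $v_!k\in\Dd{b}{k}^Q$, the adjunction $v_!\dashv v^*$ gives natural equivalences
$$\mathbb{R}\iHom(P(v),X)\simeq v^*X=X_v,$$
assembling (naturally in $v$ and $X$) into an equivalence $\varphi\circ s\simeq\mathrm{id}_{\Dd{b}{kQ}}$. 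Because $\varphi$ is an equivalence of $\infty$-categories, this right inverse is automatically a two-sided homotopy inverse, proving the claim.

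The only delicate point is ensuring the coherence (over both $Q$ and $\Dd{b}{kQ}$) of the Yoneda identification $\mathbb{R}\iHom(P(v),X)\simeq X_v$, which is really the $\infty$-categorical counit of the adjunction $v_!\dashv v^*$ applied at $k\in\Dd{b}{k}$; all the rest is a formal pullback/assembly argument made available by the previous lemma and \Cref{theo:ZQ-mesh}.
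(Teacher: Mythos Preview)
Your proof is correct and follows essentially the same route as the paper: factor $s$ as the composite of the Yoneda-type functor $y$ (the paper's $\Upsilon$) with $\iota^*$, apply the previous lemma to lift through $\Dd{b}{k}^\ZQmesh$, and then use that $\varphi$ is restriction-after-forgetful together with $\mathbb{R}\iHom(P(v),X)\simeq X_v$ to get the one-sided inverse. The only cosmetic difference is that the paper invokes the identification $rs\simeq\mathrm{id}$ as already recorded in \S\ref{subsec:ARquiver}, whereas you unpack it via the $v_!\dashv v^*$ adjunction.
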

\begin{proof}
Because the $\mathbb{R}\iHom(-,X)$ are exact, the transpose of $$\mathbb{R}\iHom: \Dd{b}{kQ}^\mathrm{op} \times \Dd{b}{kQ} \xrightarrow{ \ \ } \Dd{b}{k}$$ gives a functor 
$$\Upsilon:\Dd{b}{kQ} \xrightarrow{ \ \ } \Fun^\mathsf{ex}(\Dd{b}{kQ}^\mathrm{op},\Dd{b}{k}),\quad X \xmapsto{ \ \ } \mathbb{R}\iHom(-,X).$$ If we let $\mathcal{C}=\Dd{b}{k}$ in the previous lemma, then $s$ is precisely $\iota^*\Upsilon$, and so $s$ factors through $\Dd{b}{k}^\ZQmesh$ using $\phi:=\tilde{\iota^*}\Upsilon:\Dd{b}{kQ} \xrightarrow{ \ \ } \Dd{b}{k}^\ZQmesh$.

Now let us denote $r$ the restriction along $Q\subset \ZQ$, so that $rs \simeq \mathrm{id}$, and let $j:\Dd{b}{k}^\ZQmesh \to \Dd{b}{k}^{\ZQ}$ be the forgetful functor. Then $\phi\varphi = rj\phi \simeq rs \simeq \mathrm{id}$, and thus $\phi = \varphi^{-1}$ (in the homotopy category of $\infCAT$).
\end{proof}

\section{Actions of the repetitive automorphism groups} \label{sec:actions}

Using the equivalences with coherent Auslander-Reiten diagrams of the previous section, we prove that the groups of automorphisms of $\ZQ$ and $\ZZQ$ act on $\mathcal{C}^Q$ for any $\mathcal{C}$ stable $\infty$-category.

\begin{defi}
Let $G$ be a group and $\mathcal{C}$ an $\infty$-category. We write $\mathsf{B}G$ for the one-object groupoid associated to $G$, as well as its nerve. A left (resp. right) \emph{$\infty$-action} of $G$ on $\mathcal{C}$, denoted $G \ \rotatebox[origin=c]{-90}{$\circlearrowright$}\ \mathcal{C}$, is a functor $\mathsf{B}G \to \infCAT$ (resp. $\mathsf{B}G^\mathrm{op} \to \infCAT$) sending the only object $ \ast \in \mathsf{B}G$ to $\mathcal{C}$.

We call a \emph{strict action} of $G$ on $\mathcal{C}$ a functor $\mathsf{B}G \to \SSET$ mapping $\ast \mapsto\mathcal{C}$, which is the same as a group homomorphism $G \to \mathrm{Aut}_{\SSET}(\mathcal{C})$. Composing with the localization functor $N(\SSET) \to \infCAT$, this automatically gives also an $\infty$-action $\mathsf{B}G \to \infCAT$.
\end{defi}

\begin{rema}
Let $\eta: \mathsf{B}G \to \infCAT$ be an $\infty$-action of $G$ on $\mathcal{C}$. On mapping spaces this functor induces a morphism of $\infty$-groupoids $\Omega (\mathsf{B}G) \to \mathsf{Aut}(\mathcal{C})^{\simeq}$, where we denote $\mathsf{Aut}(-) \subset \Fun(-,-)$ the full subcategory of autoequivalences and $(-)^\simeq$ the maximal $\infty$-groupoid. Applying $\pi_0$, we get a homomorphism of groups 
$$G \xrightarrow{\quad} \pi_0(\mathsf{Aut}(\mathcal{C})^{\simeq}), \quad g \longmapsto \eta_g$$
from $G$ to the group of autoequivalences of $\mathcal{C}$ (up to natural equivalence).
\end{rema}

\begin{example} \label{example:action-Z}
Let $\mathcal{C}$ be an $\infty$-category and $f:\mathcal{C} \xrightarrow{\simeq} \mathcal{C}$ an autoequivalence. The cyclic free group $\mathbb{Z}$ acts on $\mathcal{C}$ by powers of $f$. That is, there is a functor $\mathsf{B}\mathbb{Z} \to \infCAT$ sending $\ast \mapsto \mathcal{C}$ and $n \mapsto f^n$.
\end{example}

\begin{example} \label{example:action-aut}
Let $K$ be a small $\infty$-category and $\mathcal{C}$ an $\infty$-category. Denoting $\mathrm{Aut}(K) \subset \Hom{\sSet}(K,K)$, there is a natural right strict action of $\mathrm{Aut}(K)$ on $\mathcal{C}^K$. Indeed, the group $\mathrm{Aut}(K)$ can be seen as the subcategory of $\sSet$ with one object $K$ and its automorphisms. Then the action is given by the functor 
$$\mathsf{B}\mathrm{Aut}(K)^\mathrm{op} \xhookrightarrow{\quad} \sSet^\mathrm{op} \xrightarrow{\ \Fun(-,\mathcal{C})\ } \SSET.$$
One can also see this action as a right multiplication $\mathcal{C}^K \times \mathsf{B}\mathrm{Aut}(K) \to \mathcal{C}^K$ given by the mapping $(X,\sigma) \mapsto X\sigma = X_{\sigma(-)}$.
\end{example}

\begin{nota}
Given any translation quiver $(\Gamma,\tau)$ with a fixed polarization, we write $\mathrm{Aut}_\mathsf{tr}(\Gamma)$ for the group of translation automorphisms. For $\ZZQ$, we denote by $\mathrm{Aut}_{\mathsf{tr},\sigma}(\ZZQ)$ the subgroup of translation automorphisms commuting with $\sigma$.
\end{nota}

\begin{rema}
It is easy to see that $\mathrm{Aut}_{\mathsf{tr},\sigma}(\ZZQ) \cong \mathbb{Z}\times\mathrm{Aut}_{\mathsf{tr}}(\ZQ)$, where $\mathbb{Z}$ identifies with the subgroup generated by $\sigma$ and $\mathrm{Aut}_{\mathsf{tr}}(\ZQ)$ with the automorphisms fixing every component of $\ZZQ$.
\end{rema}

\begin{prop} \label{prop:strict-action}
There is a natural right strict action of $\mathrm{Aut}_{\mathsf{tr}}(\ZQ)$ on $\mathcal{C}^\ZQmesh$ for any $\mathcal{C}$ stable $\infty$-category. 
\end{prop}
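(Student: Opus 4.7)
The plan is to lift the canonical $\mathrm{Aut}(\ZQ)$-action on $\mathcal{C}^{\ZQ}$ from \Cref{example:action-aut} to the mesh $\infty$-category, using the explicit strict pullback description of $\mathcal{C}^\ZQmesh$ provided in \Cref{prop:desc-ZQmesh}. The key point is that translation automorphisms preserve all the combinatorial data defining a mesh, so the action restricts compatibly.

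First, I would observe that every $\sigma \in \mathrm{Aut}_\mathsf{tr}(\ZQ)$ permutes the set $I$ of meshes of $\ZQ$ bijectively. Indeed, the mesh at a vertex $\tau^n v$ consists of $\tau^n v$, its translate $\tau^{n-1}v$, their common neighbors, and the polarization arrows; by definition a translation automorphism commutes with $\tau$ and $\mu$, hence sends meshes to meshes. This yields a strict right action of $\mathrm{Aut}_\mathsf{tr}(\ZQ)$ on $I$, and thus, by permutation of coordinates, strict right actions in $\sSet$ on $\textstyle\prod_I \mathcal{C}^{\Lambda^2_1}$ and on $\textstyle\prod_I \mathcal{C}^{\square,\,\mathrm{cof}}$. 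Meanwhile, the restriction of the strict action of \Cref{example:action-aut} along $\mathrm{Aut}_\mathsf{tr}(\ZQ) \hookrightarrow \mathrm{Aut}(\ZQ)$ gives a strict right action on $\mathcal{C}^\ZQ$.

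Second, I would verify that these three strict actions are compatible with the cospan
\[
\mathcal{C}^\ZQ \ \longrightarrow \ \textstyle\prod_I \mathcal{C}^{\Lambda^2_1} \ \longleftarrow\ \prod_I \mathcal{C}^{\square,\,\mathrm{cof}}
\]
from \Cref{prop:desc-ZQmesh}. Equivariance of the right-hand map is immediate as it is the coordinate-wise map $(l^*)_{\tau^n v}$. For the left-hand map, sending $X \mapsto (X_{\tau^n v} \to \bigoplus_{\tau^n v \to w} X_w \to X_{\tau^{n-1}v})_{\tau^n v \in I}$, equivariance follows because $\sigma$ acts by precomposition $X \mapsto X\sigma$ on $\mathcal{C}^\ZQ$ and at the same time permutes the indexing set $I$ in a way that matches vertices, polarization arrows and translates; this produces a strict action of $\mathrm{Aut}_\mathsf{tr}(\ZQ)$ on the whole cospan viewed as a diagram in $\sSet$.

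Finally, by \Cref{prop:desc-ZQmesh} (together with the fact recorded before its statement that the right vertical map $l^*$ is a Joyal fibration by \Cref{prop:fib-mono}, so the pullback is already strict in $\sSet$), the mesh $\infty$-category $\mathcal{C}^\ZQmesh$ is a strict pullback in $\sSet$. Forming the strict pullback of the equivariant cospan therefore yields the desired functor $\mathsf{B}\mathrm{Aut}_\mathsf{tr}(\ZQ)^\mathrm{op} \to \SSET$ sending $\ast \mapsto \mathcal{C}^\ZQmesh$. Naturality in $\mathcal{C}$ with respect to exact functors follows because the three strict actions on the cospan are evidently natural in $\mathcal{C}$ and strict pullbacks are functorial. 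There is no real obstacle here; the only substantive verification is the equivariance of the mesh-diagram map, which is a direct consequence of the definition of a translation automorphism.
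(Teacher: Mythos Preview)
The proposal is correct and takes essentially the same approach as the paper: both use the pullback description of \Cref{prop:desc-ZQmesh} and verify that a translation automorphism induces an isomorphism of the defining cospan, the paper simply packaging this step by citing (the proof of) \Cref{coro:equiv-repet}. Your account is slightly more explicit about equivariance and strictness of the pullback, but the underlying argument is identical.
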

\begin{proof}
By (the proof of) \Cref{coro:equiv-repet}, any translation automorphism $f:\ZQ \cong \ZQ$ induces an automorphism $\tilde{f^*}: \mathcal{C}^\ZQmesh \cong \mathcal{C}^\ZQmesh$, and by construction, $\tilde{(gf)^*} = \tilde{f^*}\tilde{g^*}$. Hence we get a group homomorphism
\begin{equation*}
\mathrm{Aut}_{\mathsf{tr}}(\ZQ)^\mathrm{op} \to \mathrm{Aut}_{\SSET}(\mathcal{C}^\ZQmesh), \quad f \mapsto \tilde{f^*}. \qedhere  
\end{equation*}
\end{proof}

\begin{theo} \label{theo:actionZQ}
Let $Q$ be a finite acyclic quiver. There is a natural right $\infty$-action 
$$\mathrm{Aut}_\mathsf{tr}(\ZQ) \ \rotatebox[origin=c]{-90}{$\circlearrowright$}\ \mathcal{C}^Q,$$ for any $\mathcal{C}$ stable $\infty$-category.
\end{theo}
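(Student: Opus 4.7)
The plan is to combine Proposition~\ref{prop:strict-action} with the equivalence of Theorem~\ref{theo:ZQ-mesh} and transport the strict action across that equivalence. Proposition~\ref{prop:strict-action} provides, for each stable $\infty$-category $\mathcal{C}$, a strict right action, equivalently a functor
$$\alpha_\mathcal{C}: \mathsf{B}\mathrm{Aut}_\mathsf{tr}(\ZQ)^\mathrm{op} \xrightarrow{\ \ } \SSET$$
sending $\ast \mapsto \mathcal{C}^\ZQmesh$. Postcomposing with the localization functor $N(\SSET) \to \infCAT$ yields an $\infty$-action $\eta_\mathcal{C}: \mathsf{B}\mathrm{Aut}_\mathsf{tr}(\ZQ)^\mathrm{op} \to \infCAT$ on $\mathcal{C}^\ZQmesh$ inside $\infCAT$.

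Next, I use the equivalence $\varphi_\mathcal{C}: \mathcal{C}^\ZQmesh \isoarrow \mathcal{C}^Q$ of Theorem~\ref{theo:ZQ-mesh} to transport $\eta_\mathcal{C}$ to an $\infty$-action on $\mathcal{C}^Q$. By Proposition~\ref{prop:fib-mono} applied to the inclusion $\{\ast\} \hookrightarrow \mathsf{B}\mathrm{Aut}_\mathsf{tr}(\ZQ)^\mathrm{op}$, the evaluation functor
$$\mathrm{ev}_\ast: \Fun(\mathsf{B}\mathrm{Aut}_\mathsf{tr}(\ZQ)^\mathrm{op},\, \infCAT) \xrightarrow{\ \ } \infCAT$$
is a Joyal fibration, hence an isofibration between $\infty$-categories. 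Equivalences along isofibrations lift, so $\varphi_\mathcal{C}$ lifts to an equivalence $\eta_\mathcal{C} \simeq \eta'_\mathcal{C}$ with $\eta'_\mathcal{C}(\ast) = \mathcal{C}^Q$. This $\eta'_\mathcal{C}$ is the desired right $\infty$-action $\mathrm{Aut}_\mathsf{tr}(\ZQ) \ \rotatebox[origin=c]{-90}{$\circlearrowright$}\ \mathcal{C}^Q$.

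For naturality with respect to exact functors $F:\mathcal{C}\to\mathcal{D}$: by Remark~\ref{rema:functoriality}, $F$ induces $F_*: \mathcal{C}^\ZQmesh \to \mathcal{D}^\ZQmesh$, and this strictly commutes with the actions $\alpha_\mathcal{C}$ and $\alpha_\mathcal{D}$, since both are given by restriction along translation automorphisms of $\ZQ$, and such restrictions trivially commute with postcomposition by $F$. Combined with the naturality of $\varphi_\mathcal{C}$ in $\mathcal{C}$ (already established inside the proof of Corollary~\ref{coro:equiv-repet}), the lifted $\infty$-actions $\eta'_\mathcal{C}$ assemble into the required natural system.

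The main obstacle I anticipate is the clean passage from the strict $\SSET$-level action on $\mathcal{C}^\ZQmesh$ to the transported $\infty$-action on $\mathcal{C}^Q$. Conceptually this is routine homotopy transfer, reduced here to the existence of the isofibration $\mathrm{ev}_\ast$, but a careful formulation is needed to ensure that the lifting procedure can be carried out \emph{compatibly} in $\mathcal{C}$, so that the whole construction upgrades to a natural transformation of functors $h\infCAT^\mathsf{ex} \to h\infCAT$ (rather than just a pointwise family). The key input ensuring this compatibility is precisely the naturality of $\varphi_\mathcal{C}$ and of $\alpha_\mathcal{C}$ in $\mathcal{C}$ recalled above.
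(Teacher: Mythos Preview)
Your proposal is correct and follows essentially the same approach as the paper: start from the strict action on $\mathcal{C}^\ZQmesh$ (\Cref{prop:strict-action}), pass to $\infCAT$, and transport along the equivalence of \Cref{theo:ZQ-mesh} using that evaluation at $\ast$ is an isofibration. The paper packages your isofibration-lifting step as a separate lemma (\Cref{lemm:equiv-functors}), but the argument is identical to the one you give inline via \Cref{prop:fib-mono}.
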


We need the following easy but subtle lemma in the proof:

\begin{lemm} \label{lemm:equiv-functors}
Let $u: X \to A$ be a morphism of simplicial sets with $A$ an $\infty$-category. Take $x \in X$ and $\alpha: u(x) \to a$ an equivalence in $A$. Then there exists $u': X \to A$ with $u'(x) = a$ and a natural equivalence $\eta: u \to u'$ in $\Fun(X,A)$.
\end{lemm}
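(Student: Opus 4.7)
The plan is to reduce the statement to the right lifting property of the isofibration obtained by restricting representations along the vertex inclusion $\{x\} \hookrightarrow X$. I would first observe that since $i:\{x\} \hookrightarrow X$ is a monomorphism of simplicial sets, by \Cref{prop:fib-mono} the restriction functor $i^*: \Fun(X,A) \to \Fun(\{x\},A) = A$ is a Joyal fibration. Since $A$ is an $\infty$-category, so is $\Fun(X,A)$, making $i^*$ an isofibration between $\infty$-categories and hence a map with the right lifting property with respect to the inclusion $\{0\} \hookrightarrow J$.

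Next, I would extend the equivalence $\alpha$ to a map $\tilde\alpha: J \to A$ sending $0 \mapsto u(x)$ and $1 \mapsto a$. This is possible because $\alpha$, being an equivalence in $A$, factors through the maximal Kan subcomplex $A^\simeq \subset A$, and the inclusion $\Delta^1 \hookrightarrow J$ is a trivial cofibration in the Kan-Quillen model structure, so it lifts against the fibrant object $A^\simeq$.

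Then I would consider the commutative square
$$\begin{tikzcd}
\{0\} \ar[r,"u"] \ar[d,hook] & \Fun(X,A) \ar[d,"i^*"] \\[.5em]
J \ar[r,"\tilde\alpha"'] & A,
\end{tikzcd}$$
which commutes since $i^*(u) = u(x) = \tilde\alpha(0)$. The right lifting property of $i^*$ produces a filler $H: J \to \Fun(X,A)$; defining $u' := H|_{\{1\}}$ gives $u'(x) = i^*(u') = \tilde\alpha(1) = a$, and $H$ itself is the required natural equivalence $\eta: u \to u'$ in $\Fun(X,A)$ (equivalently, by adjunction, a $J$-homotopy $X \times J \to A$ from $u$ to $u'$).

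The only mildly delicate step is the extension of $\alpha$ from $\Delta^1$ to $J$, which is handled by the standard identification of equivalences in an $\infty$-category $A$ with the 1-simplices of its maximal Kan subcomplex $A^\simeq$; everything else is a formal application of the lifting properties in the Joyal model structure.
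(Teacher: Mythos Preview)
Your argument is correct and follows essentially the same route as the paper: both use \Cref{prop:fib-mono} to see that evaluation at $x$ is an isofibration and then lift the equivalence $\alpha$ along $\{0\}\hookrightarrow J$. You are simply more explicit than the paper about two points it leaves implicit---that a Joyal fibration between $\infty$-categories is an isofibration, and that the equivalence $\alpha:\Delta^1\to A$ extends to a map $J\to A$---both of which are standard.
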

\begin{proof}
Because $A$ is an $\infty$-category and $x:\Delta^0 \to X$ is a monomorphism, the evaluation map $x^*:\Fun(X,A) \to A$ is an isofibration (\Cref{prop:fib-mono}). Hence, one can find a lift in the following diagram.
$$\begin{tikzcd}
\{0\} \ar[r,"u"] \ar[d,hookrightarrow] & \Fun(X,A) \ar[d,"x^*"] \\
J \ar[r,"\alpha"] \ar[ru,"\exists\, \eta", dashed] & A
\end{tikzcd}$$
It only remains to define $u' := \eta(1): X \to A$.
\end{proof}

\begin{proof}[Proof {\normalfont{(of \Cref{theo:actionZQ})}}]
We have a right $\infty$-action $\mathrm{Aut}(\ZQ) \ \rotatebox[origin=c]{-90}{$\circlearrowright$}\ \mathcal{C}^\ZQmesh$ by \Cref{prop:strict-action}. Then using \Cref{lemm:equiv-functors} and the equivalence $\mathcal{C}^\ZQmesh \simeq \mathcal{C}^Q$ of \Cref{theo:ZQ-mesh} we can turn this into an equivalent right $\infty$-action $\mathrm{Aut}_\mathsf{tr}(\ZQ) \ \rotatebox[origin=c]{-90}{$\circlearrowright$}\ \mathcal{C}^Q$, i.e. a functor $\mathsf{B}\mathrm{Aut}_\mathsf{tr}(\ZQ)^\mathrm{op} \to \infCAT$ sending $\ast \mapsto \mathcal{C}^Q$ which is naturally equivalent to the one giving the original action.
\end{proof}

\begin{prop} \label{prop:homomorphismZZQ}
Let $Q$ be a finite acyclic quiver. There is a natural homomorphism
\begin{equation}
\mathbb{Z} \times \mathrm{Aut}_\mathsf{tr}(\ZQ)^\mathrm{op} \xrightarrow{ \ \ } \pi_0(\mathsf{Aut}(\mathcal{C}^Q)^{\simeq}),\quad (n,f) \xmapsto{\ \ } \Sigma^n\tilde{f^*},   
\end{equation}
for any $\mathcal{C}$ stable $\infty$-category.      
\end{prop}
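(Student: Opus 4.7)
The plan is to assemble the desired homomorphism from two commuting group homomorphisms into $\pi_0(\mathsf{Aut}(\mathcal{C}^Q)^\simeq)$ and invoke the universal property of the direct product. First, since $\mathcal{C}$ is stable, the suspension $\Sigma: \mathcal{C} \to \mathcal{C}$ is an exact autoequivalence (\Cref{prop:def-stable}); applied pointwise, it gives an autoequivalence $\Sigma: \mathcal{C}^Q \isoarrow \mathcal{C}^Q$, and \Cref{example:action-Z} then yields a homomorphism $\mathbb{Z} \to \pi_0(\mathsf{Aut}(\mathcal{C}^Q)^\simeq)$, $n \mapsto \Sigma^n$. Second, \Cref{theo:actionZQ} provides a homomorphism $\mathrm{Aut}_\mathsf{tr}(\ZQ)^\mathrm{op} \to \pi_0(\mathsf{Aut}(\mathcal{C}^Q)^\simeq)$, $f \mapsto \tilde{f^*}$.

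The main step is to verify that these two homomorphisms have commuting images, i.e. that $\Sigma^n \circ \tilde{f^*} \simeq \tilde{f^*} \circ \Sigma^n$ in the homotopy category of $\infCAT$ for every $n \in \mathbb{Z}$ and every $f \in \mathrm{Aut}_\mathsf{tr}(\ZQ)$. Here is where the naturality of the mesh construction is essential: \Cref{coro:equiv-repet} states that the equivalence $\tilde{f^*}$ is natural with respect to exact functors between stable $\infty$-categories. Taking the exact functor to be $\Sigma: \mathcal{C} \to \mathcal{C}$ itself produces a commuting square $\Sigma \circ \tilde{f^*}_\mathcal{C} \simeq \tilde{f^*}_\mathcal{C} \circ \Sigma$. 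Since $\Sigma$ is invertible (up to natural equivalence) by stability, the same holds for $\Sigma^n$ with any $n \in \mathbb{Z}$, giving the required commutation.

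With commutativity in hand, the universal property of the direct product of groups combines the two homomorphisms into a single one $\mathbb{Z} \times \mathrm{Aut}_\mathsf{tr}(\ZQ)^\mathrm{op} \to \pi_0(\mathsf{Aut}(\mathcal{C}^Q)^\simeq)$, $(n,f) \mapsto \Sigma^n \tilde{f^*}$. Naturality in $\mathcal{C}$ with respect to exact functors is inherited from the two constituent pieces: for $\Sigma$ it is immediate since exact functors preserve suspension, and for $\tilde{f^*}$ it is part of \Cref{coro:equiv-repet}. The only real content is the commutativity step, but once the naturality of the mesh equivalence is invoked in the right way, it is essentially a one-line check; no further calculation is needed.
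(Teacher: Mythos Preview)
Your proof is correct and follows essentially the same approach as the paper: both arguments produce the two homomorphisms from $\mathbb{Z}$ (via $\Sigma$) and from $\mathrm{Aut}_\mathsf{tr}(\ZQ)^\mathrm{op}$ (via \Cref{theo:actionZQ}), verify that their images commute by invoking the naturality of $\tilde{f^*}$ with respect to the exact functor $\Sigma$, and then use the universal property of the direct product of groups. The paper phrases the commutativity step more tersely (``since equivalences are exact'') and explains the direct-product universal property via the free-product quotient, but the content is identical.
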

\begin{proof}
Observe that the direct product of groups $G\times H$ can be obtained as a quotient of the free product (coproduct) $G\ast H$ by the normal subgroup generated by elements $ghg^{-1}h^{-1}$ with $g\in G$ and $h\in H$. Hence a pair of homomorphisms $\alpha: G \to K$ and $\beta: H \to K$ with the property that $\alpha(g)\beta(h) = \beta(h)\alpha(g)$ induces a unique homomorphism $G \times H \to K$ sending $(g,h) \mapsto \alpha(g)\beta(h)$.

Since equivalences are exact, $\Sigma^n\tilde{f^*} = \tilde{f^*}\Sigma^n$ in $\pi_0(\mathsf{Aut}(\mathcal{C}^Q)^{\simeq})$. Thus, we can apply the above discussion to the homomorphisms $\mathbb{Z} \to \pi_0(\mathsf{Aut}(\mathcal{C}^Q)^{\simeq})$ (\Cref{example:action-Z}) and $\mathrm{Aut}_\mathsf{tr}(\ZQ)^\mathrm{op} \to \pi_0(\mathsf{Aut}(\mathcal{C}^Q)^{\simeq})$ (\Cref{theo:actionZQ}) to get the desired homomorphism.
\end{proof}

\begin{rema}[Naturality] \label{rema:naturality-action}
The equivalences produced in \Cref{prop:homomorphismZZQ} are stable equivalences, i.e. they are natural with respect to exact functors, in the sense that, given $(n,f)\in \mathbb{Z} \times \mathrm{Aut}_\mathsf{tr}(\ZQ)^\mathrm{op}$ and an exact functor $F:\mathcal{C} \to \mathcal{D}$, the square 
$$\begin{tikzcd}
\mathcal{C}^Q \ar[r,"\Sigma^n\tilde{f^*}"] \ar[d,"F_*"'] & \mathcal{C}^{Q} \ar[d,"F_*"] \\
\mathcal{D}^Q \ar[r,"\Sigma^n\tilde{f^*}"] & \mathcal{D}^{Q}
\end{tikzcd}$$
commutes in $\infCAT$. This is because suspension commutes with exact functors and the naturality of the action of $\mathrm{Aut}_\mathsf{tr}(\ZQ)$ (see \Cref{coro:equiv-repet}).
\end{rema}

Combining \Cref{theo:actionZQ,prop:homomorphismZZQ}, we obtain a general method to produce autoequivalences of $\mathcal{C}^Q$ from symmetries of the irregular Auslander-Reiten quiver $\Gamma_Q^\mathrm{irr}$. By \Cref{lemm:sigma} below, if $Q$ is Dynkin, the automorphism $\sigma$ of $\Gamma_Q^\mathrm{irr}$ corresponding to the suspension of $\Dd{b}{kQ}$ commutes with every other, and so $\mathrm{Aut}_{\mathsf{tr},\sigma}(\Gamma_{Q}^\mathrm{irr})$ is nothing but $\mathrm{Aut}_\mathsf{tr}(\ZQ)$ in the Dynkin case and $\mathrm{Aut}_{\mathsf{tr},\sigma}(\ZZQ)$ in the non-Dynkin.

\begin{lemm} \label{lemm:sigma}
If $Q$ is Dynkin, then $\sigma$ is in the center of the group $\mathrm{Aut}_{\mathsf{tr}}(\Gamma_{Q}^\mathrm{irr})$.
\end{lemm}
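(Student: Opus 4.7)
The plan is to analyze $G := \mathrm{Aut}_\mathsf{tr}(\Gamma_Q^\mathrm{irr})$ explicitly as a central extension of the automorphism group of the underlying Dynkin diagram, and then locate $\sigma$ inside it. Via \Cref{fact:Happel2}(1), $\Gamma_Q^\mathrm{irr} \cong \ZQ^\mathrm{op}$, so $G \cong \mathrm{Aut}_\mathsf{tr}(\ZQ)$. Since every translation automorphism commutes with $\tau$ by definition, $\langle\tau\rangle$ is central in $G$. A direct arrow-chasing argument --- if $g \in G$ acts trivially on the underlying unoriented Dynkin diagram $\underline Q$, then arrow-preservation together with the connectedness of $Q$ forces $g$ to shift all vertices of $Q \subset \ZQ$ by the same integer, so $g \in \langle\tau\rangle$ --- shows that the natural map $G \to \mathrm{Aut}(\underline Q)$ has kernel exactly $\langle\tau\rangle$, while surjectivity is immediate by lifting a graph automorphism $\phi$ via $(n,v) \mapsto (n,\phi(v))$. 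We thus obtain a central extension
\begin{equation*}
    1 \longrightarrow \langle\tau\rangle \longrightarrow G \longrightarrow \mathrm{Aut}(\underline Q) \longrightarrow 1.
\end{equation*}

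For every simply-laced Dynkin diagram except $D_4$, one has $|\mathrm{Aut}(\underline Q)| \leq 2$; any central extension of a cyclic group of order at most $2$ by $\mathbb{Z}$ is automatically abelian, since two lifts of the generator differ by a central element and thus commute with one another. Therefore $G$ is abelian in these cases, and $\sigma$ is trivially central.

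The main obstacle is the case $\underline Q = D_4$, where $\mathrm{Aut}(\underline{D_4}) \cong S_3$ is non-abelian and $G \cong \langle\tau\rangle \times S_3$ (the extension splits, since graph automorphisms lift) has center exactly $\langle\tau\rangle$. Here I must show that $\sigma$ has trivial $S_3$-component. Happel's periodicity formula $\Sigma^2 \simeq \tau^{-h}$ on $\Dd{b}{kQ}$, with $h = 6$ the Coxeter number of $D_4$, forces $\sigma^2 = \tau^{-6}$ in $G$; writing $\sigma = (\tau^a,\rho) \in \langle\tau\rangle \times S_3$ yields $a = -3$ and $\rho^2 = e$, so $\rho$ is either trivial or a transposition permuting two of the three legs. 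To rule out the transposition case, I will invoke that the longest Weyl element of $D_4$ satisfies $w_0 = -1$, so the Nakayama permutation $-w_0$ is trivial; concretely, the derived Nakayama functor $\nu \simeq \Sigma\tau$ preserves the isomorphism class of the simple module at each leg, so the induced automorphism of the AR quiver has trivial $S_3$-component. Since $\tau$ manifestly has trivial $S_3$-component and so does $\nu$, the same holds for $\sigma = \nu\tau^{-1}$, forcing $\rho = e$ and $\sigma = \tau^{-3} \in Z(G)$.
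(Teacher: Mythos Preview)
Your argument is correct and supplies a self-contained proof where the paper simply defers to \cite[Lemma 3.4]{MiyYek01}. The strategy --- realize $G$ as a central extension of $\mathrm{Aut}(\underline Q)$ by $\langle\tau\rangle$, observe that any central extension of a cyclic group is abelian (handling all Dynkin types except $D_4$), and for $D_4$ show $\sigma \in \langle\tau\rangle$ via the triviality of the Nakayama permutation --- is essentially what Miyachi and Yekutieli do in the cited lemma, so you are reconstructing their argument rather than taking a genuinely different route.

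One minor gap: the lift $(n,v)\mapsto(n,\phi(v))$ you invoke for surjectivity (and for the splitting in the $D_4$ case) is a quiver automorphism of $\ZQ$ only when $\phi$ preserves the orientation of $Q$; if $\phi$ reverses some arrow $v\to w$, then there is no arrow $(n,\phi(v))\to(n,\phi(w))$ in $\ZQ$, only one $(n,\phi(v))\to(n+1,\phi(w))$. This is easily repaired --- either note that $\ZQ$ depends only on $\underline Q$ up to translation isomorphism (so for $D_4$ one may assume the branch vertex is a sink, where $S_3$ genuinely acts by oriented quiver automorphisms), or write down the correct lift that shifts by one across reversed edges. In fact surjectivity is not needed for your conclusion: it suffices that $G/\langle\tau\rangle$ \emph{embeds} into $\mathrm{Aut}(\underline Q)$, and for $D_4$ your Nakayama argument already places $\sigma$ in the kernel $\langle\tau\rangle$ regardless of whether the extension splits. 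The periodicity relation $\sigma^2=\tau^{-6}$ is then superfluous --- it only serves to identify $\sigma=\tau^{-3}$ exactly.
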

\begin{proof}
See \cite[Lemma 3.4]{MiyYek01}.
\end{proof}

\begin{coro} \label{coro:action}
Let $Q$ be a finite acyclic quiver. There is a natural homomorphism
\begin{equation} \label{eq:action}
\mathrm{Aut}_{\mathsf{tr},\sigma}(\Gamma_{Q}^\mathrm{irr}) \xrightarrow{ \ \ } \pi_0(\mathsf{Aut}(\mathcal{C}^Q)^{\simeq}),   
\end{equation}
for any $\mathcal{C}$ stable $\infty$-category.  
\end{coro}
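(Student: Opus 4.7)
The plan is to split into the Dynkin and non-Dynkin cases using \Cref{fact:Happel2} and, in each case, reduce to the homomorphisms already available in \Cref{theo:actionZQ} and \Cref{prop:homomorphismZZQ}.

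In the Dynkin case, \Cref{fact:Happel2}(1) gives a canonical translation isomorphism $\Gamma_Q^\mathrm{irr} \cong \ZQ^\mathrm{op}$, which induces $\mathrm{Aut}_\mathsf{tr}(\Gamma_Q^\mathrm{irr}) \cong \mathrm{Aut}_\mathsf{tr}(\ZQ)$ (a translation quiver and its opposite share the same group of translation automorphisms, identified on vertex bijections). By \Cref{lemm:sigma}, $\sigma$ is central in this group, so the condition of commuting with $\sigma$ is vacuous and $\mathrm{Aut}_{\mathsf{tr},\sigma}(\Gamma_Q^\mathrm{irr}) \cong \mathrm{Aut}_\mathsf{tr}(\ZQ)$. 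I would then obtain the desired homomorphism from the right $\infty$-action $\mathrm{Aut}_\mathsf{tr}(\ZQ)\ \rotatebox[origin=c]{-90}{$\circlearrowright$}\ \mathcal{C}^Q$ of \Cref{theo:actionZQ}.

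In the non-Dynkin case, \Cref{fact:Happel2}(2) yields $\Gamma_Q^\mathrm{irr} \cong \ZZQ^\mathrm{op}$, sending the $\sigma$ of the statement to the shift of $\ZZQ$; combined with the elementary identification $\mathrm{Aut}_{\mathsf{tr},\sigma}(\ZZQ) \cong \mathbb{Z} \times \mathrm{Aut}_\mathsf{tr}(\ZQ)$ noted right after the definition of $\mathrm{Aut}_{\mathsf{tr},\sigma}$, this gives $\mathrm{Aut}_{\mathsf{tr},\sigma}(\Gamma_Q^\mathrm{irr}) \cong \mathbb{Z}\times\mathrm{Aut}_\mathsf{tr}(\ZQ)$. \Cref{prop:homomorphismZZQ} then supplies the required homomorphism via $(n,f) \mapsto \Sigma^n\tilde{f^*}$. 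Naturality in $\mathcal{C}$ follows from \Cref{rema:naturality-action} together with the naturality built into \Cref{theo:actionZQ}.

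The only point needing care --- and not really an obstacle --- is bookkeeping with opposites: the translation-quiver identifications land in $\ZQ^\mathrm{op}$ or $\ZZQ^\mathrm{op}$, whereas \Cref{prop:homomorphismZZQ} is phrased with $\mathrm{Aut}_\mathsf{tr}(\ZQ)^\mathrm{op}$. These are harmlessly reconciled by the canonical isomorphism $G \cong G^\mathrm{op}$, $g \mapsto g^{-1}$, together with the observation that translation-quiver automorphisms of $\ZQ$ and $\ZQ^\mathrm{op}$ agree on underlying vertex maps, so the resulting map is intrinsically well-defined.
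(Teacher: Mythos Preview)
Your proposal is correct and follows essentially the same approach as the paper: the paper's argument (contained in the paragraph preceding the corollary rather than in a separate proof) likewise splits into Dynkin and non-Dynkin cases via \Cref{fact:Happel2}, invokes \Cref{lemm:sigma} to identify $\mathrm{Aut}_{\mathsf{tr},\sigma}(\Gamma_Q^\mathrm{irr})$ with $\mathrm{Aut}_\mathsf{tr}(\ZQ)$ or $\mathrm{Aut}_{\mathsf{tr},\sigma}(\ZZQ)\cong\mathbb{Z}\times\mathrm{Aut}_\mathsf{tr}(\ZQ)$, and then appeals to \Cref{theo:actionZQ} and \Cref{prop:homomorphismZZQ} respectively. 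Your explicit handling of the opposite-group bookkeeping is a useful clarification that the paper leaves implicit.
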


Starting with automorphisms of (a significant piece of) the Auslander-Reiten quiver of $\Dd{b}{kQ}$, we produce autoequivalences of stable $\infty$-categories of representations $\mathcal{C}^Q$.  
A safety check that the autoequivalences obtained are meaningful is that they have the expected action on the AR quiver $\Gamma_{Q}^\mathrm{irr}$ when $\mathcal{C} = \Dd{b}{k}$ for a field $k$. 

We note, however, that functors on $\Dd{b}{kQ}$ do not induce well defined morphisms of quivers on $\Gamma_{Q}^\mathrm{irr}$, as arrows of $\Gamma_{Q}^\mathrm{irr}$ depend on a particular choice of a basis of irreducible morphisms. Instead, we turn our attention to certain permutations of the vertices.

\begin{nota}
For a quiver $\Gamma$, we write $\mathrm{Aut}^0(\Gamma)$ for the group of permutations of $\Gamma_0$ preserving arrow-multiplicity. There is an obvious epimorphism $\mathrm{Aut}(\Gamma) \to \mathrm{Aut}^0(\Gamma)$ which forgets the action of a quiver automorphism on arrows. This epimorphism is split, as any ordering of the arrows between every two vertices defines a section, and it is an isomorphism precisely when $\Gamma$ has no multiple arrows. 

In our context, we denote $\mathrm{Aut}^0_{\mathsf{tr},\sigma}(\Gamma_{Q}^\mathrm{irr}) \subset \mathrm{Aut}^0(\Gamma_{Q}^\mathrm{irr})$ the subgroup of those permutations which commute with $\tau$ and $\sigma$, and we consider fixed a section of the split epimorphism $\mathrm{Aut}_{\mathsf{tr},\sigma}(\Gamma_{Q}^\mathrm{irr}) \to \mathrm{Aut}^0_{\mathsf{tr},\sigma}(\Gamma_{Q}^\mathrm{irr})$. In particular, this produces a natural homomorphism 
\begin{equation} \label{eq:action-0}
\mathrm{Aut}^0_{\mathsf{tr},\sigma}(\Gamma_{Q}^\mathrm{irr}) \xrightarrow{ \ \ } \pi_0(\mathsf{Aut}(\mathcal{C}^Q)^{\simeq})
\end{equation}
for any $\mathcal{C}$ stable $\infty$-category.
\end{nota}

Let $k$ be a field and consider the special case $\mathcal{C} = \Dd{b}{k}$ in the group homomorphism \eqref{eq:action-0}, that we write  
$$\varphi: \mathrm{Aut}^0_{\mathsf{tr},\sigma}(\Gamma_{Q}^\mathrm{irr}) \xrightarrow{ \ \ } \pi_0(\mathsf{Aut}(\Dd{b}{kQ})^{\simeq}), \quad f \mapsto \varphi_f.$$

\begin{lemm} \label{lemm:auts-field}
For all $f \in \mathrm{Aut}^0_{\mathsf{tr},\sigma}(\Gamma_{Q}^\mathrm{irr})$, the functor $\varphi_f$ coincides with $f$ on iso-classes of indecomposable objects of $\Gamma_{Q}^\mathrm{irr}$.
\end{lemm}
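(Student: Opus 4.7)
My plan is to make $\varphi_f$ explicit using the description of the inverse mesh equivalence from Corollary \ref{rema:Rhom}, and then to identify its action on indecomposables via Theorem \ref{theo:ZQ-mesh}.

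First, I would unpack $\varphi_f$. Corollary \ref{rema:Rhom} provides an explicit homotopy inverse $M \mapsto \widetilde{M} = \mathbb{R}\iHom(-,M)$ to the mesh equivalence $\Dd{b}{k}^{\ZQmesh} \xrightarrow{\simeq} \Dd{b}{kQ}$. Combined with the strict precomposition action on $\Dd{b}{k}^{\ZQmesh}$ (\Cref{prop:strict-action}) and the conversion from the op-homomorphism of \Cref{prop:homomorphismZZQ} to the homomorphism of \Cref{coro:action}, this yields
\begin{equation*}
\varphi_f(M)_v \simeq \mathbb{R}\iHom(E_{f^{-1}(v)}, M) \qquad (v \in Q,\ M \in \Dd{b}{kQ}),
\end{equation*}
the inverse appearing on the right accounting for the sign flip between the op-action and the group homomorphism of \Cref{coro:action}. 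Specializing to $M = E_w$ and comparing with $(E_{f(w)})_v \simeq \mathbb{R}\iHom(E_v, E_{f(w)})$, the claim $\varphi_f(E_w) \cong E_{f(w)}$ reduces, by \Cref{theo:ZQ-mesh}, to exhibiting a natural isomorphism of representations of $Q$:
\begin{equation*}
\mathbb{R}\iHom(E_{f^{-1}(v)}, E_w) \simeq \mathbb{R}\iHom(E_v, E_{f(w)}) \qquad (v \in Q).
\end{equation*}

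Next, I would produce this isomorphism using the AR-theoretic nature of $f$. Since $f \in \mathrm{Aut}^0_{\mathsf{tr},\sigma}(\Gamma_Q^{\mathrm{irr}})$ commutes with $\tau$ and $\sigma$ and preserves arrow-multiplicities, the mesh-neighborhood of each $y \in \Gamma_Q^{\mathrm{irr}}$ coincides with that of $f(y)$ (after lifting $f$ via the fixed section to preserve the polarization). By Happel's description of $\Gamma_Q^{\mathrm{irr}}$ (\Cref{fact:Happel2}) as generated from $Q^{\mathrm{op}}$ under $\tau^{\mathbb{Z}}\sigma^{\mathbb{Z}}$, and by the fact that the $\mathrm{Hom}$-complexes $\mathbb{R}\iHom(E_x, E_y)$ in $\Dd{b}{kQ}$ can be computed inductively from the AR-triangles starting at the indecomposable projectives, the $f$-equivariance of this recursive computation furnishes the desired natural isomorphism.

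The main technical obstacle will be this last step: while the pointwise isomorphism at each $v \in Q$ is essentially combinatorial, assembling these into a morphism natural in $v$ requires careful propagation through the mesh structure of $\Gamma_Q^{\mathrm{irr}}$. As a sanity check, the claim may first be verified on the generators $\tau$ and $\sigma$ of the distinguished subgroup $\langle \tau, \sigma \rangle \subset \mathrm{Aut}^0_{\mathsf{tr},\sigma}(\Gamma_Q^{\mathrm{irr}})$, where $\varphi_\tau$ and $\varphi_\sigma$ are naturally equivalent to the AR-translation and to the suspension of $\Dd{b}{kQ}$ respectively; the general case then follows by composition using the functoriality of the $\infty$-action of \Cref{theo:actionZQ}.
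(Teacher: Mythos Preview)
Your setup is correct: unwinding \Cref{rema:Rhom} and the precomposition action does give $(\varphi_f(M))_v \simeq \mathbb{R}\iHom(E_{f^{-1}(v)},M)$ naturally in $v \in Q$, and the problem does reduce to identifying this with $(E_{f(w)})_v \simeq \mathbb{R}\iHom(E_v,E_{f(w)})$ when $M = E_w$. The gap is in how you propose to do this. The ``$f$-equivariance of the recursive computation'' of Hom-complexes through AR-triangles is never made precise, and you yourself flag the naturality in $v$ as the main obstacle without resolving it. Your fallback, checking on the generators $\tau$ and $\sigma$, does not close the gap either: the subgroup $\langle \tau,\sigma\rangle$ is in general a proper subgroup of $\mathrm{Aut}^0_{\mathsf{tr},\sigma}(\Gamma_Q^{\mathrm{irr}})$ (any automorphism of the underlying graph of $Q$ gives further elements), so functoriality of the action alone cannot reach arbitrary $f$.

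The paper sidesteps the whole difficulty by swapping the roles of the two variables. From \Cref{rema:Rhom} one has a natural equivalence of bifunctors $\mathbb{R}\iHom(f(-),-) \simeq \mathbb{R}\iHom(-,\varphi_f^{-1}(-))$ on $\ZQ \times \Dd{b}{kQ}$. Rather than fixing the second argument and chasing naturality in the $\ZQ$-variable (your route), the paper fixes an indecomposable $M$ in the \emph{first} argument and reads off a natural equivalence in $X \in \Dd{b}{kQ}$:
\[
\mathbb{R}\iHom(f(M),X)\ \simeq\ \mathbb{R}\iHom(M,\varphi_f^{-1}(X))\ \simeq\ \mathbb{R}\iHom(\varphi_f(M),X),
\]
the second step using only that $\varphi_f$ is an autoequivalence. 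Taking $H^0$ and applying the Yoneda lemma in $\mathsf{D}^b(kQ)$ gives $f(M) \simeq \varphi_f(M)$ immediately; the case of $f$ not fixing components is then handled by writing $f = \sigma^n g$. No AR-combinatorics or inductive propagation is needed --- the naturality you were struggling to manufacture in the $\ZQ$-variable is free in the $\Dd{b}{kQ}$-variable, and Yoneda finishes.
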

\begin{proof}
Let $f \in \mathrm{Aut}^0_{\mathsf{tr},\sigma}(\Gamma_{Q}^\mathrm{irr})$ fixing every component, so that we can identify $f$ with an automorphism in $\mathrm{Aut}^0_\mathsf{tr}(\ZQ)$. By definition of the action and \Cref{rema:Rhom}, there is an equivalence $\mathbb{R}\iHom(f(-),-) \simeq \mathbb{R}\iHom(-,\varphi^{-1}(-))$ of functors $\Dd{b}{kQ} \to \Dd{b}{k}^\ZQ$
(more precisely, of functors $\ZQ \times \Dd{b}{kQ} \to \Dd{b}{k}$). 
If we fix an indecomposable $M \in \ZQ$ and evaluate, we get an equivalence
$$\mathbb{R}\iHom(f(M),-) \simeq \mathbb{R}\iHom(M,\varphi_f^{-1}(-)) \simeq \mathbb{R}\iHom(\varphi_f(M),-)$$
of functors $\Dd{b}{kQ} \to \Dd{b}{k}$. Applying cohomology $H^0$ and Yoneda on the homotopy category, it follows that $f(M) \simeq \varphi_f(M)$ in $\Dd{b}{kQ}$.

If $Q$ is Dynkin, then $\Gamma_{Q}^\mathrm{irr}$ has only one component and we are finished. Otherwise, any $f \in \mathrm{Aut}^0_{\mathsf{tr},\sigma}(\Gamma_{Q}^\mathrm{irr})$ is $\sigma^ng$ with $g \in \mathrm{Aut}^0_\mathsf{tr}(\ZQ)$. The previous argument shows that $\tilde{g^*}(M) \simeq g(M)$, and hence $\varphi_f(M) = \Sigma^n\tilde{g^*}(M) \simeq \sigma^ng(M)$ for all $M \in \Gamma_{Q}^\mathrm{irr}$.
\end{proof}

Therefore, \Cref{coro:action} provides versions of relevant functors in representation theory (e.g. the Auslander-Reiten translation, or the Serre functor) for coefficients in abstract stable homotopy theories, which specialized to coefficients in (the derived category of) a field recover their classical counterparts. A more precise statement of this will appear in \Cref{example:pic-field}.

\section{Relation to Picard groups} \label{sec:picard}

We employ the group actions constructed in the previous section to contribute to the computation of Picard groups of quivers over ring spectra ---in particular, of spectral Picard groups. Our main strategy consists of reducing these computations to the case of coefficients in a field, where we can leverage the results of \cite{MiyYek01}.

\begin{defi}
Let $(\mathcal{V},\otimes, \mathbb{1})$ be a monoidal $\infty$-category. An object $x \in \mathcal{V}$ is called \emph{$\otimes$-invertible} if there is $y \in \mathcal{V}$ such that $x \otimes y \simeq \mathbb{1} \simeq y \otimes x$. The \emph{Picard group} of $\mathcal{V}$ is
$$\mathsf{Pic}(\mathcal{V}) = \{x \in \mathcal{V} \mid x \text{ is $\otimes$-invertible}\}/\simeq,$$
i.e. the group of iso-classes of $\otimes$-invertible objects with multiplication $\otimes$ and unit $\mathbb{1}$.

For $A \in \mathsf{Alg}(\mathcal{C})$, we write $\mathsf{Pic}_\mathcal{C}(A) = \mathsf{Pic}({}_A\mathsf{BMod}_A(\mathcal{C}))$. For an $\mathbb{E}_\infty$-ring $R$, we also shorten $\mathsf{Pic}_R(A) = \mathsf{Pic}_{\Mod_R}(A)$ and $\mathsf{Pic}_R(Q) = \mathsf{Pic}_R(RQ)$
\end{defi}

\begin{rema}
If $\mathcal{V}$ is a monoidal $\infty$-category, then the homotopy category $h\mathcal{V}$ inherits the structure of a monoidal (1-)category and $\mathsf{Pic}(h\mathcal{V}) = \mathsf{Pic}(\mathcal{V})$.
\end{rema}

\begin{examples}
\begin{enumerate}
    \item It is a classical result \cite{HopMahSad94} that $\mathsf{Pic}(\Sp) \cong \mathbb{Z}$, generated by the suspension of the sphere spectrum $\Sigma \mathbb{S}$.
    \item For a commutative ring $R$, the computation in \cite{Fau03} gives $\mathsf{Pic}(\D{R}) \cong \mathsf{Pic}(R) \times \mathsf{Cont}(\mathrm{Spec}(R),\mathbb{Z})$, the first factor being the ordinary Picard group of $R$ and the second the additive group of continuous functions $\mathrm{Spec}(R) \to \mathbb{Z}$. In particular, it follows that $\mathsf{Pic}(\D{\mathbb{Z}}) \cong \mathbb{Z}$ generated by $\Sigma\mathbb{Z}$.
\end{enumerate}
\end{examples}

\begin{rema} \label{rema:eilenberg-watts} 
Let $A$ be an $R$-algebra spectrum. By a higher Eilenberg-Watts theorem (\Cref{example:Eilenberg-Watts}), $A$-bimodules over $R$ correspond to $R$-linear autofunctors of $\Mod_A$ via $M \mapsto - \otimes_A M$. Thus, denoting $\mathsf{Aut}_R(-) \subset \mathsf{Fun}^\mathsf{L}_R(-,-)$ the full subcategory of $R$-linear autoequivalences, one obtains an isomorphism $$\mathsf{Pic}_R(A) \cong \pi_0(\mathsf{Aut}_R(\Mod_A)^{\simeq}),$$ i.e. the Picard group is the group of $R$-linear autoequivalences.
\end{rema}

We say that an \emph{$\infty$-action} $\eta:\mathsf{B}G \to \infCAT$ of a group $G$ on an $\mathcal{E}$-linear $\infty$-category $\mathcal{C}$ is \emph{$\mathcal{E}$-linear} if, for each $g \in G$, the autoequivalence $\eta_g: \mathcal{C} \isoarrow \mathcal{C}$ is $\mathcal{E}$-linear, that is, if $\eta:\mathsf{B}G \to \infCAT$ factors through $\mathsf{Cat}_\mathcal{E} \hookrightarrow \infCAT$.

\begin{lemm} \label{lemm:linear-action}
Let $\mathcal{E}$ be a presentably monoidal stable $\infty$-category, and let $Q$ be a finite acyclic quiver. Then the action $\mathrm{Aut}_\mathsf{tr}(\ZQ) \ \rotatebox[origin=c]{-90}{$\circlearrowright$}\ \mathcal{E}^Q$ of \Cref{theo:actionZQ} is $\mathcal{E}$-linear.
\end{lemm}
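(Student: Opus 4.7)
The plan is to verify $\mathcal{E}$-linearity of each functor appearing in the construction of the action, appealing to the general principle that the inclusions $\mathsf{Cat}_\mathcal{E} \hookrightarrow \Prcat \hookrightarrow \infCAT$ both preserve limits: the first by \Cref{prop:closure-linear}(2), the second being a standard fact about presentable $\infty$-categories. Consequently, any $\infCAT$-pullback of $\mathcal{E}$-linear categories along $\mathcal{E}$-linear functors is itself $\mathcal{E}$-linear, with $\mathcal{E}$-linear projections, and every comparison map produced by the universal property is again $\mathcal{E}$-linear.

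The atomic building blocks are all $\mathcal{E}$-linear. For any map $g: K \to L$ of small $\infty$-categories, the restriction $g^*: \mathcal{E}^L \to \mathcal{E}^K$ corresponds under \Cref{prop:closure-linear}(1) to $g^* \otimes \mathrm{id}_\mathcal{E}: \Fun(L, \Spc) \otimes \mathcal{E} \to \Fun(K, \Spc) \otimes \mathcal{E}$ and is therefore $\mathcal{E}$-linear. Similarly, the left Kan extension $j_!i_*: \mathcal{E}^{\Delta^1} \to \mathcal{E}^{\square}$ from \Cref{cons:cof-fib} is $\mathcal{E}$-linear, since left Kan extension commutes with the monoidal decomposition $\Fun(-,\mathcal{C}) \simeq \Fun(-,\Spc) \otimes \mathcal{C}$ (both functors are colimit-preserving and natural in $\mathcal{C}$). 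Hence $\mathcal{E}^{\square,\mathrm{cof}}$, as the essential image of $j_!i_*$ (\Cref{lemm:cof-eq}), is canonically $\mathcal{E}$-linear, and the map $l^*: \mathcal{E}^{\square,\mathrm{cof}} \to \mathcal{E}^{\Lambda^2_1}$ of \Cref{def:mesh-subcat} is $\mathcal{E}$-linear as a composition of $\mathcal{E}$-linear functors.

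Feeding these observations into the inductive construction, the pullback squares \eqref{eq:Qpv-mesh}, \eqref{eq:def-total-mesh}, and \eqref{eq:ZQ-mesh} all lift to pullbacks in $\mathsf{Cat}_\mathcal{E}$; the tower limit $\mathcal{E}^\ZQmesh = \varprojlim_n \mathcal{E}^{_n\underline{Q},\, \mathrm{mesh}}$ is a $\mathsf{Cat}_\mathcal{E}$-limit; and the restriction equivalence $\mathcal{E}^\ZQmesh \simeq \mathcal{E}^Q$ of \Cref{theo:ZQ-mesh} is $\mathcal{E}$-linear. For $f \in \mathrm{Aut}_\mathsf{tr}(\ZQ)$, the autoequivalence $\tilde{f^*}$ of \Cref{coro:equiv-repet} is induced between these $\mathsf{Cat}_\mathcal{E}$-pullbacks by an isomorphism of cospans whose three legs $f^*: \mathcal{E}^{\ZQ} \cong \mathcal{E}^{\ZQ}$, $\rho$, and $\rho'$ are a restriction along a quiver automorphism and permutations of product factors---each manifestly $\mathcal{E}$-linear---so $\tilde{f^*}$ itself is $\mathcal{E}$-linear.

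The main remaining subtlety, and what I expect to be the only real obstacle, is upgrading this pointwise $\mathcal{E}$-linearity of each $\tilde{f^*}$ to a coherent factorization $\mathsf{B}\mathrm{Aut}_\mathsf{tr}(\ZQ)^\mathrm{op} \to \mathsf{Cat}_\mathcal{E}$ of the whole action. This is handled by exploiting the strict action of \Cref{prop:strict-action}: the equality $\widetilde{(gh)^*} = \tilde{h^*}\tilde{g^*}$ holds strictly in $\SSET$, and the $\mathcal{E}$-linear structures on the various $\tilde{f^*}$ are produced by the same functorial universal pullback construction, so they are automatically compatible with these strict compositions. Thus the strict group homomorphism $\mathrm{Aut}_\mathsf{tr}(\ZQ)^\mathrm{op} \to \mathrm{Aut}_{\SSET}(\mathcal{E}^\ZQmesh)$ refines to a functor into $\mathsf{Cat}_\mathcal{E}$ whose image in $\infCAT$ agrees with the action of \Cref{theo:actionZQ}.
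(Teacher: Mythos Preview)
Your proposal is correct and follows essentially the same route as the paper: verify that every ingredient in the construction of $\mathcal{E}^{\ZQmesh}$ and of the autoequivalences $\tilde{f^*}$ is $\mathcal{E}$-linear, using \Cref{prop:closure-linear} to propagate linearity through functor categories, restrictions, and pullbacks. Your treatment is in fact more explicit than the paper's (e.g.\ you spell out why $\mathcal{E}^{\square,\mathrm{cof}}$ and $l^*$ are $\mathcal{E}$-linear, and trace the cospan maps $f^*,\rho,\rho'$).

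One remark on your final paragraph: the paper's definition of an $\mathcal{E}$-linear action, stated just before the lemma, is the \emph{pointwise} condition---``for each $g\in G$, the autoequivalence $\eta_g$ is $\mathcal{E}$-linear''---and the paper's own proof verifies only this. It does not address the coherent factorization $\mathsf{B}G^{\mathrm{op}}\to\mathsf{Cat}_\mathcal{E}$ you worry about. So your last paragraph goes beyond what is strictly required (and beyond what the paper proves); the argument you sketch there is plausible but unnecessary for the lemma as stated and used.
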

\begin{proof}
We use \Cref{prop:closure-linear} repeatedly. First, $\mathcal{E}^Q$ is $\mathcal{E}$-linear as it is any functor $\infty$-category to $\mathcal{E}$ and the restriction functors between them. Since all $\infty$-categories and functors involved are $\mathcal{E}$-linear, the pullback \eqref{eq:ZQ-mesh} can be taken in $\mathsf{Cat}_\mathcal{E}$. This implies $\mathcal{E}^\ZQmesh$ and its forgetful functor $\mathcal{E}^\ZQmesh \to \mathcal{E}^\ZQ$ are $\mathcal{E}$-linear. Moreover, for $f \in \mathrm{Aut}_\mathsf{tr}(\ZQ)$, the isomorphism $\tilde{f^*}: \mathcal{E}^\ZQmesh \cong \mathcal{E}^\ZQmesh$ is induced by $\mathcal{E}$-linear functors, hence it is $\mathcal{E}$-linear too. Finally, the restriction $\mathcal{E}^\ZQmesh \to \mathcal{E}^Q$ is also $\mathcal{E}$-linear as it is a composition the $\mathcal{E}$-linears $\mathcal{E}^\ZQmesh \to \mathcal{E}^\ZQ \to \mathcal{E}^Q$. Then the proof is complete since the equivalences produced by the action are a composition of the isomorphisms $\tilde{f^*}: \mathcal{E}^\ZQmesh \cong \mathcal{E}^\ZQmesh$ with the restriction $\mathcal{E}^\ZQmesh \to \mathcal{E}^Q$ (or its inverse).
\end{proof}

\begin{rema}
Let $R$ be an $\mathbb{E}_\infty$-ring. Since suspension is always $R$-linear, we get combining \Cref{rema:eilenberg-watts} and \Cref{lemm:linear-action} that the action map \eqref{eq:action} defines a group homomorphism
$\mathrm{Aut}_{\mathsf{tr},\sigma}(\Gamma_{Q}^\mathrm{irr}) \xrightarrow{ \ \ } \mathsf{Pic}_{R}(Q)$.
\end{rema}

\begin{example} \label{example:pic-field} 
Let $k$ be a field and let $Q$ be a finite acyclic quiver. The main result of \cite[Theorem 3.8]{MiyYek01} asserts that the natural map 
$$q: \mathsf{Pic}_{\D{k}}(Q) \xrightarrow{ \ \ } \mathrm{Aut}^0_{\mathsf{tr},\sigma}(\Gamma_{Q}^\mathrm{irr}),$$ sending a complex of $kQ$-bimodules $T$ to its action on iso-classes of indecomposables $[M] \mapsto [M\otimes^\mathbb{L}_{kQ}T]$, is a split epimorphism of groups. Moreover, if $Q$ is a tree, then $q$ is an isomorphism (by \cite[Proposition 1.7(2)]{MiyYek01}), and $\mathrm{Aut}^0_{\mathsf{tr},\sigma}(\Gamma_{Q}^\mathrm{irr}) \cong \mathrm{Aut}_{\mathsf{tr},\sigma}(\Gamma_{Q}^\mathrm{irr})$. 

We observe now that the homomorphism provided by the action \eqref{eq:action-0},
$$\varphi: \mathrm{Aut}^0_{\mathsf{tr},\sigma}(\Gamma_{Q}^\mathrm{irr}) \xrightarrow{ \ \ } \mathsf{Pic}_{\D{k}}(Q), \quad f \xmapsto{ \ \ } T_f,$$
gives a section of $q$. Indeed, by \Cref{lemm:auts-field}, the action of $-\otimes^\mathbb{L}_{kQ} T_f$ on $\Gamma_{Q}^\mathrm{irr}$ is precisely that of $f$, and so $q\varphi(f)$ recovers exactly $f$. \qed
\end{example}

\begin{cons}[Base change for Picard groups]
Let $u:\mathcal{C} \to \mathcal{D}$ be a colimit preserving monoidal functor between presentably symmetric monoidal $\infty$-categories, and let $A \in \mathsf{Alg}(\mathcal{C})$ and $B = u(A) \in \mathsf{Alg}(\mathcal{D})$. By \cite[sec. 4.8.3-4.8.5]{Lur17}, one gets an induced functor $\overline{u}: \Mod_A(\mathcal{C}) \to \Mod_B(\mathcal{D})$ such that the diagram
\begin{equation*}
\begin{tikzcd}
\Mod_A(\mathcal{C}) \ar[r,"\overline{u}"] \ar[d,"\mathrm{forget}"'] & \Mod_B(\mathcal{D}) \ar[d,"\mathrm{forget}"] \\
\mathcal{C} \ar[r,"u"] & \mathcal{D}
\end{tikzcd}   
\end{equation*}
commutes. Similarly, there is an induced functor and a commutative diagram
\begin{equation*}
\begin{tikzcd}
{}_A\BMod_A(\mathcal{C}) \ar[r,"\overline{u}"] \ar[d,"\mathrm{forget}"'] & {}_B\BMod_B(\mathcal{D}) \ar[d,"\mathrm{forget}"] \\
\mathcal{C} \ar[r,"u"] & \mathcal{D}.
\end{tikzcd}   
\end{equation*}
We observe that $\overline{u}: {}_A\BMod_A(\mathcal{C}) \to {}_B\BMod_B(\mathcal{D})$ is a monoidal functor. Indeed, the definition of the two-sided Bar construction \cite[Construction 4.4.2.7]{Lur17} is clearly compatible with monoidal functors, that is, $u(\mathrm{Bar}_A(M,N)) = \mathrm{Bar}_B(u(M),u(N))$, and since $u$ preserves geometric realizations, it follows that $u$ preserves the relative tensor product ($u(M\otimes_A N) \simeq u(M)\otimes_B u(N)$, canonically). Because forgetful functors are conservative, $\overline{u}$ also preserves the relative tensor product. Consequently, we obtain an induced homomorphism of Picard groups $$\overline{u}: \mathsf{Pic}_\mathcal{C}(A) \to \mathsf{Pic}_\mathcal{D}(B),$$
whose action coincides with that of $u$ on the underlying $\infty$-categories $\mathcal{C}$ and $\mathcal{D}$.

In particular, if $R$ is an $\mathbb{E}_\infty$-ring, $S$ is a commutative $R$-algebra (equivalently, a morphism of $\mathbb{E}_\infty$-rings $R\to S$), $A$ is an $R$-algebra and $B = S \otimes_RA$, then there is an induced homomorphism of Picard groups
$$S\otimes_R-: \mathsf{Pic}_R(A) \to \mathsf{Pic}_S(B).$$
\end{cons}

\begin{lemm}
Let $K$ be an $\infty$-category with finitely many objects, and let $R$ be an $\mathbb{E}_\infty$-ring and $S$ a commutative $R$-algebra. Then $S\otimes_RRK \simeq SK$ as $S$-algebras.
\end{lemm}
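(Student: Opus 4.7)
The plan is to reduce the claim to the equivalence $(\Mod_A)^K \simeq \Mod_{AK}$ established as the corollary to Schwede--Shipley, by tensoring the $R$-linear version of it with $\Mod_S$ relative to $\Mod_R$ and invoking base change for module categories.

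Concretely, I would tensor both sides of $(\Mod_R)^K \simeq \Mod_{RK}$ with $\Mod_S$ over $\Mod_R$. On the left, a double application of \Cref{prop:closure-linear}(1) gives
$$(\Mod_R)^K \otimes_{\Mod_R} \Mod_S \;\simeq\; \Fun(K,\Spc)\otimes \Mod_S \;\simeq\; (\Mod_S)^K.$$
On the right, base change for module $\infty$-categories (\cite[Section 4.8]{Lur17}) yields $\Mod_{RK}\otimes_{\Mod_R}\Mod_S \simeq \Mod_{S\otimes_R RK}$. Combining these produces an equivalence of $S$-linear $\infty$-categories $(\Mod_S)^K \simeq \Mod_{S\otimes_R RK}$.

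Then I would track the distinguished compact generator through the comparison. Postcomposition with $(S\otimes_R-)\colon \Mod_R \to \Mod_S$ is an $R$-linear left adjoint, and hence commutes with finite coproducts and the left Kan extensions $k_!$, so it sends $\bigoplus_{k\in K}k_!R$ to $\bigoplus_{k\in K}k_!S$; on the other side of the equivalence, this is precisely the base change of the free rank-one module $RK$, namely the free rank-one module $S\otimes_R RK$. Taking spectral endomorphism $S$-algebras of this generator on the two models of $(\Mod_S)^K$ then gives $SK \simeq S\otimes_R RK$ as $S$-algebras, where the left-hand side is the defining description and the right-hand side is the endomorphism algebra of a free rank-one module.

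The main obstacle is bookkeeping the $S$-linearity at every stage, so that the final comparison is \emph{a priori} one of $S$-algebras rather than only of underlying spectra. Once one verifies that the base-change tensor $-\otimes_{\Mod_R}\Mod_S$ lands in $\mathsf{Cat}_S$ and that the two equivalences above are compatibly $S$-linear (which follows from the $\mathsf{CAlg}$-morphism $R\to S$ via \Cref{cons:tensor-presentable} and \Cref{prop:closure-linear}), the higher Eilenberg--Watts description of endomorphisms (\Cref{example:Eilenberg-Watts}) upgrades the resulting identification of generators to one of $S$-algebras.
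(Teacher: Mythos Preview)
Your argument is correct but takes a genuinely different route from the paper. The paper's proof is a one-liner: it observes that there is a canonical comparison map
\[
S \otimes_R \mathsf{End}_{(\Mod_R)^K}\bigl(\textstyle\bigoplus_k k_!R\bigr) \longrightarrow \mathsf{End}_{(\Mod_S)^K}\bigl(S\otimes_R\textstyle\bigoplus_k k_!R\bigr) \simeq \mathsf{End}_{(\Mod_S)^K}\bigl(\textstyle\bigoplus_k k_!S\bigr)
\]
and invokes \cite[Lemma 2.7]{AntGep14} directly to conclude that this map is an equivalence. No categorical base change is needed; the compactness of the generator is what makes the tensor product commute with the endomorphism construction.

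Your approach instead goes through base change at the level of module \emph{categories}: you tensor the Schwede--Shipley equivalence $(\Mod_R)^K \simeq \Mod_{RK}$ with $\Mod_S$ over $\Mod_R$, and then read off the endomorphism $S$-algebra of the generator on each side. This is more conceptual and stays closer to the machinery already assembled in the paper (\Cref{prop:closure-linear}, \Cref{example:Eilenberg-Watts}), at the cost of having to justify that the Schwede--Shipley equivalence is itself $\Mod_R$-linear before you can apply $-\otimes_{\Mod_R}\Mod_S$ to it; this is true (it is the enriched form of the theorem, e.g.\ \cite[7.1.2.6--7]{Lur17}), but you should cite it rather than the unenriched version. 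One small correction: the final identification of endomorphism $S$-algebras under an $S$-linear equivalence is not really ``higher Eilenberg--Watts''; it is simply that $S$-linear equivalences preserve $S$-enriched endomorphism objects. The paper's approach buys brevity and an explicit comparison map; yours buys a more categorical explanation and avoids the external reference to Antieau--Gepner.
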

\begin{proof}
By \cite[Lemma 2.7]{AntGep14}, the canonical comparison map
$$S\otimes_RRK = S \otimes_R \mathsf{End}_{RQ}(\textstyle\bigoplus\limits_{k\in K}k_!R) \xrightarrow{ \ \ } \mathsf{End}_{SQ}(S\otimes_R\textstyle\bigoplus\limits_{k\in K}k_!R) \simeq \mathsf{End}_{SQ}(\textstyle\bigoplus\limits_{k\in K}k_!S) = SK$$
is an equivalence.
\end{proof}

\begin{prop} \label{prop:comm-pic}
Let $Q$ be a finite acyclic quiver, and let $R$ be an $\mathbb{E}_\infty$-ring and $S$ a commutative $R$-algebra. There is a commutative diagram of groups
\begin{equation} \label{eq:diag-pic}
\begin{tikzcd}
\mathrm{Aut}_{\mathsf{tr},\sigma}(\Gamma_{Q}^\mathrm{irr}) \ar[r,"\varphi_R"] \ar[rd,"\varphi_S"'] & \mathsf{Pic}_R(Q) \ar[d,"S\otimes_R-"] \\
 & \mathsf{Pic}_S(Q)
\end{tikzcd}   
\end{equation}
where the horizontal and diagonal homomorphisms are given by the action \eqref{eq:action}.
\end{prop}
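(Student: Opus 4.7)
The strategy is to reduce the commutativity to the naturality of the action with respect to exact functors (\Cref{rema:naturality-action}), instantiated at the colimit-preserving symmetric monoidal functor $F = S \otimes_R -: \Mod_R \to \Mod_S$.

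First, I would apply \Cref{rema:naturality-action} to $F$, which is exact since it is colimit-preserving between stable $\infty$-categories. For each $f \in \mathrm{Aut}_{\mathsf{tr},\sigma}(\Gamma_Q^\mathrm{irr})$, this yields a commutative square in $\infCAT$
$$\begin{tikzcd}
\Mod_R^Q \ar[r,"\varphi_R(f)"] \ar[d,"F_*"'] & \Mod_R^Q \ar[d,"F_*"] \\
\Mod_S^Q \ar[r,"\varphi_S(f)"] & \Mod_S^Q,
\end{tikzcd}$$
where $F_*$ denotes postcomposition with $F$.

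Next, I would transport this square along the canonical equivalences $\Mod_R^Q \simeq \Mod_{RQ}$ and $\Mod_S^Q \simeq \Mod_{SQ}$ (using the preceding lemma $S \otimes_R RQ \simeq SQ$). The vertical pushforward $F_*$ is pointwise extension of scalars, and by the construction of base change for modules recalled just above the proposition it corresponds precisely to the monoidal functor $\overline{u}: \Mod_{RQ} \to \Mod_{SQ}$. The horizontal autoequivalences are $\Mod_R$-linear (resp.\ $\Mod_S$-linear) by \Cref{lemm:linear-action}, so under the higher Eilenberg--Watts equivalence (\Cref{example:Eilenberg-Watts}) they correspond to relative tensor product with the bimodules $T_f^R = \varphi_R(f)$ and $T_f^S = \varphi_S(f)$ respectively.

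Finally, I would read off the bimodule identity from the square. Evaluating on the generator $RQ \in \Mod_{RQ}$ and using that $\overline{u}$ is monoidal (as noted in the base change construction preceding the statement), the two paths round the square give
$$S \otimes_R T_f^R \;\simeq\; \overline{u}(RQ \otimes_{RQ} T_f^R) \;\simeq\; \overline{u}(RQ) \otimes_{SQ} T_f^S \;\simeq\; T_f^S$$
as $SQ$-bimodules, which is exactly the desired equality $(S \otimes_R -)(\varphi_R(f)) = \varphi_S(f)$ in $\mathsf{Pic}_S(Q)$.

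The main obstacle is the dictionary between the $\infty$-categorical naturality square and the bimodule base-change $\overline{u}$: one must verify carefully that, under Eilenberg--Watts and the identification $\Mod_R^Q \simeq \Mod_{RQ}$, the pointwise postcomposition $F_*$ on representations really is the monoidal $\overline{u}$ on the bimodule side, so that the commutative square produces an equivalence of \emph{bimodules} (not merely of underlying modules). This is delicate but routine higher-algebraic bookkeeping, ultimately resting on the symmetric monoidality of $F$ and the functoriality of ${}_A\BMod_A(-)$ recorded in the construction preceding the proposition.
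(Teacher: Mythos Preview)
Your overall strategy matches the paper's: both reduce to the naturality of the action (\Cref{rema:naturality-action}) applied to the base-change functor, then translate via Eilenberg--Watts. The paper writes exactly your first square (with $A=RQ$, $B=SQ$) and also evaluates at the unit $A$ at the very end.

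The difference is in how the bimodule structure is secured. Your evaluation step, as written, takes place in the square of \emph{module} categories $\Mod_{RQ}\to\Mod_{SQ}$, so evaluating at $RQ$ yields an equivalence $\overline{u}(T_f^R)\simeq T_f^S$ only as right $SQ$-modules; the line ``$\overline{u}$ is monoidal'' is not meaningful at this stage, since the module-level $\overline{u}$ carries no monoidal structure. You correctly flag this as the main obstacle but do not resolve it. The paper's fix is concrete and short: before evaluating, apply $(-)^{Q^{\mathrm{op}}}$ to the naturality square and invoke the identification $\Mod_A^{Q^{\mathrm{op}}}\simeq {}_A\BMod_A^R$ of \Cref{coro:bimodules-reps}. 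This promotes the entire square to one of \emph{bimodule} categories, so that evaluating at $A$ now produces the desired equivalence in ${}_{SQ}\BMod_{SQ}(\Mod_S)$ directly. That single step is the missing ingredient in your argument.
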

\begin{proof}
To simplify the notation, let us denote $A = RQ$, $B = SQ$, $u = S\otimes_R-$ and ${}_A\BMod_A^R = {}_A\BMod_A(\Mod_R)$.

Under the identifications $\Mod_A \simeq \Mod_R^Q$ and $\mathsf{Fun}^\mathsf{L}_R(\Mod_A,\Mod_A) \simeq {}_A\BMod_A^R$, the naturality of the action with respect to exact functors (\Cref{rema:naturality-action}) provides, for each $g \in \mathrm{Aut}_{\mathsf{tr},\sigma}(\Gamma_{Q}^\mathrm{irr})$, a commutative diagram
$$\begin{tikzcd}
\Mod_{A} \ar[r,"\overline{u}"] \ar[d,"-\otimes_{A}\varphi_R(g)"'] & \Mod_{B} \ar[d,"-\otimes_{B}\varphi_S(g)"] \\
\Mod_{A} \ar[r,"\overline{u}"] & \Mod_{B}.
\end{tikzcd}$$
Taking functor $\infty$-categories $(-)^{Q^\mathrm{op}}$ and using the identification $\mathsf{Mod}_A^{Q^\mathrm{op}} \simeq {}_{A}\mathsf{BMod}_{A}^R$ (\Cref{coro:bimodules-reps}), we get a commutative diagram on bimodules
$$\begin{tikzcd}
{}_{A}\BMod_{A}^R \ar[r,"\overline{u}"] \ar[d,"-\otimes_{A}\varphi_R(g)"'] & {}_{B}\BMod_{B}^S \ar[d,"-\otimes_{B}\varphi_S(g)"] \\
{}_{A}\BMod_{A}^R \ar[r,"\overline{u}"] & {}_{B}\BMod_{B}^S.
\end{tikzcd}$$
Evaluating the above homotopy identity on $A$, we get 
$$\overline{u}(\varphi_R(g)) \simeq \overline{u}(A \otimes_{A} \varphi_R(g)) \simeq \overline{u}(A) \otimes_{B} \varphi_S(g) \simeq B \otimes_{B} \varphi_S(g) \simeq \varphi_{S}(g)$$
which is precisely the desired commutativity of \eqref{eq:diag-pic}.
\end{proof}

Let $R$ be an $\mathbb{E}_\infty$-ring. We call a \emph{residue field} of $R$ any field $k$ for which there exists a morphism $R \to Hk$. Observe that if $R\not\simeq0$ is connective, then $\pi_0(R)$ contains a maximal ideal $\mathfrak{m}$ and the composition of the canonical morphism $R \to \pi_0(R)$ with the projection to the quotient shows $k = \pi_0(R)/\mathfrak{m}$ as a residue field for $R$.   

\begin{coro} \label{coro:split-mono}
Let $Q$ be a finite acyclic quiver, and let $R$ be any $\mathbb{E}_\infty$-ring with a residue field. Then the action homomorphism
$$\varphi_R: \mathrm{Aut}^0_{\mathsf{tr},\sigma}(\Gamma_{Q}^\mathrm{irr}) \xrightarrow{\ \ } \mathsf{Pic}_{R}(Q)$$
is a split monomorphism.
\end{coro}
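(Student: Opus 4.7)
The plan is to build an explicit retraction of $\varphi_R$ by base-changing to the residue field, where the desired section is already available by \Cref{example:pic-field}. Fix a residue field $k$ of $R$, i.e., a morphism of $\mathbb{E}_\infty$-rings $R \to Hk$; this makes $Hk$ into a commutative $R$-algebra, so \Cref{prop:comm-pic} applies with $S = Hk$ and yields the commutative triangle
\[
\begin{tikzcd}
\mathrm{Aut}^0_{\mathsf{tr},\sigma}(\Gamma_{Q}^\mathrm{irr}) \ar[r,"\varphi_R"] \ar[rd,"\varphi_{Hk}"'] & \mathsf{Pic}_R(Q) \ar[d,"Hk \otimes_R -"] \\
 & \mathsf{Pic}_{Hk}(Q) \cong \mathsf{Pic}_{\D{k}}(Q),
\end{tikzcd}
\]
where the right-hand identification uses $\Mod_{Hk} \simeq \D{k}$ and the fact that the spectral path algebra $HkQ$ corresponds to the classical $kQ$ under this equivalence (via the lemma immediately preceding \Cref{prop:comm-pic}).

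By \Cref{example:pic-field}, the diagonal map $\varphi_{Hk}$ is a section of the natural homomorphism $q \colon \mathsf{Pic}_{\D{k}}(Q) \to \mathrm{Aut}^0_{\mathsf{tr},\sigma}(\Gamma_Q^\mathrm{irr})$ sending an invertible bimodule to its action on iso-classes of indecomposables; equivalently, $q \circ \varphi_{Hk} = \mathrm{id}$. Composing with the triangle above gives
\[
q \circ (Hk \otimes_R -) \circ \varphi_R \;=\; q \circ \varphi_{Hk} \;=\; \mathrm{id}_{\mathrm{Aut}^0_{\mathsf{tr},\sigma}(\Gamma_{Q}^\mathrm{irr})},
\]
so $q \circ (Hk \otimes_R -) \colon \mathsf{Pic}_R(Q) \to \mathrm{Aut}^0_{\mathsf{tr},\sigma}(\Gamma_{Q}^\mathrm{irr})$ is an explicit retraction of $\varphi_R$, which is therefore a split monomorphism.

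There is essentially no substantive obstacle in the argument: it is an assembly of results already in place. The only point worth double-checking is that the hypotheses of \Cref{prop:comm-pic} are genuinely met---in particular, that under the equivalence $\Mod_{Hk} \simeq \D{k}$ the base-change map $Hk \otimes_R -$ on Picard groups lands in the group $\mathsf{Pic}_{\D{k}}(Q)$ appearing in \Cref{example:pic-field}, so that applying $q$ makes sense and yields the identity on $\mathrm{Aut}^0_{\mathsf{tr},\sigma}(\Gamma_{Q}^\mathrm{irr})$ by the field case.
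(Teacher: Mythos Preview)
Your proof is correct and follows essentially the same approach as the paper's: apply \Cref{prop:comm-pic} with $S = Hk$ for a residue field $k$, then compose the base-change map with the retraction $q$ from \Cref{example:pic-field} to obtain a retraction of $\varphi_R$. The paper's argument is more terse but identical in substance.
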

\begin{proof}
Let $S = Hk$ for a field $k$ and $u = Hk\otimes_R-:\Mod_R \to \D{k}$ in \Cref{prop:comm-pic}. Then $\varphi_{Hk}$ has a retraction $q$ by \Cref{example:pic-field}, and so $(q\overline{u})\varphi_R = q\varphi_{Hk} = \mathrm{id}$.
\end{proof}

\begin{coro} \label{coro:split-epi}
Let $Q$ be a finite tree, and let $R$ be an $\mathbb{E}_\infty$-ring with residue field $k$. Then the induced homomorphism
$$Hk \otimes_R - : \mathsf{Pic}_{R}(Q) \xrightarrow{\ \ } \mathsf{Pic}_{\D{k}}(Q)$$
is a split epimorphism.
\end{coro}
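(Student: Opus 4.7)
The plan is to construct an explicit section of $Hk\otimes_R-$ by going through the group $\mathrm{Aut}^0_{\mathsf{tr},\sigma}(\Gamma_Q^\mathrm{irr})$ and exploiting the fact that, for trees, the action homomorphism already realizes the full Picard group over $k$.

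First, specialize \Cref{example:pic-field} to the case where $Q$ is a tree: the homomorphism $q: \mathsf{Pic}_{\D{k}}(Q) \to \mathrm{Aut}^0_{\mathsf{tr},\sigma}(\Gamma_Q^\mathrm{irr})$ becomes an isomorphism whose inverse is precisely $\varphi_k$ (the action map for $\mathcal{C}=\D{k}$). Thus over a field, $\varphi_k$ is an iso rather than merely a split mono.

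Next, define a candidate section
\[
s := \varphi_R \circ q : \mathsf{Pic}_{\D{k}}(Q) \xrightarrow{ \ \ } \mathrm{Aut}^0_{\mathsf{tr},\sigma}(\Gamma_Q^\mathrm{irr}) \xrightarrow{ \ \ } \mathsf{Pic}_R(Q),
\]
using $\varphi_R$ from \Cref{coro:split-mono}. Applying \Cref{prop:comm-pic} with $S = Hk$ gives the commutativity $(Hk\otimes_R-) \circ \varphi_R = \varphi_{Hk}$. Under the canonical identification $\mathsf{Pic}_{Hk}(Q) \simeq \mathsf{Pic}_{\D{k}}(Q)$, the action map $\varphi_{Hk}$ is naturally identified with $\varphi_k$. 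Therefore
\[
(Hk\otimes_R-) \circ s \;=\; (Hk\otimes_R-) \circ \varphi_R \circ q \;=\; \varphi_k \circ q \;=\; \mathrm{id}_{\mathsf{Pic}_{\D{k}}(Q)},
\]
where the last equality uses that $q = \varphi_k^{-1}$ in the tree case. This exhibits $s$ as a section of $Hk\otimes_R-$, proving it is a split epimorphism.

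The main conceptual step is the identification $\varphi_{Hk} = \varphi_k$, which requires matching the two presentations of the Picard group: the one in terms of $Hk$-algebra bimodules in $\Mod_{Hk}$, and the classical one in terms of complexes of $kQ$-bimodules. This follows from the equivalence $\Mod_{Hk} \simeq \D{k}$ and the compatibility of the construction of $\varphi$ with exact functors (\Cref{rema:naturality-action}), applied to this equivalence. Beyond this bookkeeping, the argument is formal: the tree hypothesis enters only to make $\varphi_k$ an isomorphism, and without it one would at most obtain a retract of $\mathrm{Aut}^0_{\mathsf{tr},\sigma}(\Gamma_Q^\mathrm{irr})$ inside the image of $Hk\otimes_R-$.
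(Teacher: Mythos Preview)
Your proof is correct and follows essentially the same approach as the paper: both construct the section as $\varphi_R$ composed with the inverse of $\varphi_{Hk}$ (which you write as $q$, using that $q=\varphi_k^{-1}$ for trees), and both verify it is a section via the commutativity $(Hk\otimes_R-)\circ\varphi_R = \varphi_{Hk}$ from \Cref{prop:comm-pic}. Your additional paragraph making explicit the identification $\varphi_{Hk}=\varphi_k$ through $\Mod_{Hk}\simeq\D{k}$ is a point the paper leaves implicit.
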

\begin{proof}
As above, let $S = Hk$ and $u = Hk\otimes_R-:\Mod_R \to \D{k}$ in \Cref{prop:comm-pic}. In this case $\varphi_{Hk}$ is an isomorphism (\Cref{example:pic-field}), and so $\overline{u}(\varphi_R\varphi_{Hk}^{-1}) = \mathrm{id}$
\end{proof}

\begin{examples} \label{example:picard}
\begin{enumerate}
    \item If $R$ is a non-zero connective $\mathbb{E}_\infty$-ring (e.g. the sphere spectrum $\mathbb{S}$, or the Eilenberg-Maclane spectrum of a nonzero commutative ring), then $\mathsf{Pic}_{R}(Q)$ contains $\mathrm{Aut}^0_{\mathsf{tr},\sigma}(\Gamma_{Q}^\mathrm{irr})$ as a semidirect factor.
    \item Let $k$ be any field. The characteristic homomorphism $\mathbb{Z} \to k$ and the unit morphism from the sphere spectrum $\mathbb{S} \to H\mathbb{Z}$ induce homomorphisms
    $$\mathsf{Pic}_{\Sp}(Q) \xrightarrow{H\mathbb{Z}\otimes_\mathbb{S} - } \mathsf{Pic}_{\D{\mathbb{Z}}}(Q) \xrightarrow{k \otimes_\mathbb{Z} -} \mathsf{Pic}_{\D{k}}(Q).$$
    If $Q$ is a tree, then by \Cref{coro:split-epi}, this shows both the integral and spectral Picard groups containing $\mathsf{Pic}_{\D{k}}(Q)$ as a semidirect factor.
\end{enumerate}
\end{examples}

Experience suggests that, due to its universality, the two split epimorphisms above have no kernel. Jointly with Moritz Rahn and Jan Stovicek we plan to prove:

\begin{conj}
If $Q$ is a tree, then the group homomorphisms 
$$\mathsf{Pic}_{\Sp}(Q) \xrightarrow{H\mathbb{Z}\otimes_\mathbb{S} - } \mathsf{Pic}_{\D{\mathbb{Z}}}(Q) \xrightarrow{k \otimes_\mathbb{Z} -} \mathsf{Pic}_{\D{k}}(Q)$$
are both isomorphisms.
\end{conj}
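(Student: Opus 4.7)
By \Cref{coro:split-epi}, both maps are split epimorphisms, with sections given by the action homomorphisms $\varphi_{\mathbb{S}}$ and $\varphi_{H\mathbb{Z}}$ from \Cref{coro:action} followed by the inverse of $\mathsf{Pic}_{\D{k}}(Q)\cong\mathrm{Aut}_{\mathsf{tr},\sigma}(\Gamma_{Q}^{\mathrm{irr}})$ (\Cref{example:pic-field}). The conjecture is therefore equivalent to the injectivity of both maps. I describe the argument for $Hk\otimes_{\mathbb{Z}}-$ first; the spectral-to-integral case follows the same pattern, but with additional higher-homotopy input.

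Let $T\in\mathsf{Pic}_{\D{\mathbb{Z}}}(Q)$ satisfy $Hk\otimes_{\mathbb{Z}}^{\mathbb{L}}T\simeq kQ$ as bimodules. Higher Eilenberg--Watts (\Cref{example:Eilenberg-Watts}) identifies $T$ with a $\mathbb{Z}$-linear autoequivalence $F=-\otimes_{\mathbb{Z}Q}^{\mathbb{L}}T$ of $\D{\mathbb{Z}}^{Q}$ whose base change $Hk\otimes F$ is naturally equivalent to the identity of $\D{k}^{Q}$. Combining \Cref{theo:ZQ-mesh} with \Cref{lemm:auts-field}, the action of $F$ on iso-classes of indecomposable objects in $\Gamma_{Q}^{\mathrm{irr}}$ coincides with the (trivial) image of $T$ in $\mathrm{Aut}_{\mathsf{tr},\sigma}(\Gamma_{Q}^{\mathrm{irr}})$; in particular $F(P(v))\simeq P(v)$ in $\D{\mathbb{Z}Q}$ for every $v\in Q_{0}$. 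Since the indecomposable projectives are compact generators, $F$ is determined up to natural equivalence by its restriction to the full subcategory they span, which identifies with $Q^{\mathrm{op}}$. Hence proving $F\simeq\mathrm{id}$ reduces to the rigidity statement that the $k$-reduction of the space of $\mathbb{Z}$-linear exact endofunctors of $\D{\mathbb{Z}Q}$ that preserve $Q^{\mathrm{op}}$ is injective on $\pi_{0}$ of the subspace of autoequivalences. Because $Q$ is a tree, $\mathbb{Z}Q$ is a hereditary $\mathbb{Z}$-order, so $\mathrm{Ext}^{i}_{\mathbb{Z}Q}(P(v),P(w))=0$ for $i\geq 1$; an obstruction-theoretic induction along an admissible ordering of $Q_{0}$ (in the spirit of \Cref{cons:knitting}) then reduces the problem to the injectivity of $\mathrm{Hom}_{\mathbb{Z}Q}(P(v),P(w))\to\mathrm{Hom}_{kQ}(P(v)_{k},P(w)_{k})$ on units, which is elementary.

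For the first map $H\mathbb{Z}\otimes_{\mathbb{S}}-$, the same outline applies with $\Sp$ in place of $\D{\mathbb{Z}}$, but now the mapping spaces $\mathrm{map}_{\Sp^{Q}}(P(v),P(w))\simeq\bigvee_{\text{paths}}\mathbb{S}$ carry higher homotopy from $\pi_{*}(\mathbb{S})$, so the obstruction theory is genuinely non-trivial. The natural tool is a descent spectral sequence for the Picard space $\mathsf{pic}(\mathbb{S}Q)$ along $\mathbb{S}\to H\mathbb{Z}$ in the style of Mathew--Stojanoska, whose $E_{2}$-page one would identify with a Hochschild-type cohomology of $\mathbb{Z}Q$ with coefficients built from $\pi_{*}(\mathbb{S})$. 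Hereditarity of $\mathbb{Z}Q$ for $Q$ a tree then forces vanishing in positive degrees, causing the spectral sequence to collapse onto the discrete piece $\mathsf{Pic}_{\D{\mathbb{Z}}}(Q)$.

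The main obstacle will be constructing this descent spectral sequence for the non-commutative ring spectrum $\mathbb{S}Q$ (Mathew--Stojanoska developed the corresponding formalism in the $\mathbb{E}_{\infty}$-setting) and identifying its $E_{2}$-page precisely enough to verify the collapse. The tree hypothesis is essential throughout: beyond trees, $\mathbb{Z}Q$ is not hereditary, new non-trivial Hochschild classes can appear, and both injectivity statements may genuinely fail.
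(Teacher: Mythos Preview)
The paper does not prove this statement: it is explicitly labeled a \emph{conjecture}, with the author noting only that ``experience suggests'' the split epimorphisms have trivial kernel and that a proof is planned in future joint work. There is therefore no paper proof to compare against, and your proposal should be read as a proof strategy rather than something to be checked against an existing argument.

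As a strategy your outline is plausible but contains real gaps and at least one incorrect claim. The step ``combining \Cref{theo:ZQ-mesh} with \Cref{lemm:auts-field}, the action of $F$ on iso-classes of indecomposable objects in $\Gamma_Q^{\mathrm{irr}}$ coincides with the trivial image of $T$ \ldots\ in particular $F(P(v))\simeq P(v)$ in $\D{\mathbb{Z}Q}$'' is not justified: \Cref{lemm:auts-field} is a statement over a field, and all you know a priori is that $Hk\otimes_{\mathbb{Z}} F(P(v))\simeq P(v)_k$. Promoting this to $F(P(v))\simeq P(v)$ in $\D{\mathbb{Z}Q}$ requires a genuine lifting or Nakayama-type argument for perfect complexes over $\mathbb{Z}Q$, which you do not supply. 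The subsequent ``obstruction-theoretic induction along an admissible ordering'' is only a slogan; even granting $F(P(v))\simeq P(v)$ objectwise, you must control the action of $F$ on the \emph{maps} between projectives, and here the relevant automorphism group is $\mathrm{Out}(\mathbb{Z}Q)$, whose triviality for trees is what the Miyachi--Yekutieli argument over a field really uses. Your assertion that ``beyond trees, $\mathbb{Z}Q$ is not hereditary'' is false: the path algebra of any finite acyclic quiver over $\mathbb{Z}$ is hereditary. The tree hypothesis enters not through global dimension but through the vanishing of outer automorphisms and the absence of multiple arrows (cf.\ \Cref{example:pic-field}). For the spectral case you yourself flag the main obstacle: the Mathew--Stojanoska machinery is developed for $\mathbb{E}_\infty$-rings, and extending it to the noncommutative $\mathbb{S}Q$ with a tractable $E_2$-page is a substantial open problem, not a routine adaptation.
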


\bibliographystyle{alpha}
\bibliography{abstract_representation_AR_diagrams}

\begin{thebibliography}{AHHK07}

\bibitem[AG14]{AntGep14}
Benjamin Antieau and David Gepner.
\newblock Brauer groups and \'etale cohomology in derived algebraic geometry.
\newblock {\em Geometry \& Topology}, 18(2):1149--1244, 2014.

\bibitem[AHHK07]{AngHapKra07}
Lidia Angeleri~H{\"u}gel, Dieter Happel, and Henning Krause, editors.
\newblock {\em Handbook of Tilting Theory}.
\newblock Number 332 in London {{Mathematical Society Lecture Note Series}}. Cambridge University Press, 2007.

\bibitem[Aok23]{Aok23}
Ko~Aoki.
\newblock Tensor triangular geometry of filtered objects and sheaves.
\newblock {\em Mathematische Zeitschrift}, 303(62), 2023.

\bibitem[ARS97]{AusReiSma97}
Maurice Auslander, Idun Reiten, and Sverre~O. Smal{\o}.
\newblock {\em Representation Theory of {{Artin}} Algebras}.
\newblock Number~36 in Cambridge Studies in Advanced Mathematics. Cambridge University Press, 1997.

\bibitem[ASS06]{AssSimSko06}
Ibrahim Assem, Daniel Simson, and Andrzej Skowro{\'n}ski.
\newblock {\em Elements of the Representation Theory of Associative Algebras. {{Vol}}. 1}.
\newblock Number~65 in London Mathematical Society Student Texts. Cambridge University Press, 2006.

\bibitem[BGP73]{BerGelPon73}
I.~N. Bernstein, I.~M. Gel'fand, and V.~A. Ponomarev.
\newblock Coxeter functors and {{Gabriel}}'s theorem.
\newblock {\em Uspehi Mat. Nauk}, 28(2):19--32, 1973.

\bibitem[Cis19]{Cis19}
Denis-Charles Cisinski.
\newblock {\em Higher {{Categories}} and {{Homotopical Algebra}}}.
\newblock Number 180 in Cambridge {{Studies}} in {{Advanced Mathematics}}. Cambridge University Press, 2019.

\bibitem[CN22]{CisNgu22}
Denis-Charles Cisinski and Hoang~Kim Nguyen.
\newblock The universal {{coCartesian}} fibration.
\newblock arXiv:2210.08945, 2022.

\bibitem[DJW19]{DycJasWal19}
Tobias Dyckerhoff, Gustavo Jasso, and Tashi Walde.
\newblock Simplicial structures in higher {{Auslander-Reiten}} theory.
\newblock {\em Advances in Mathematics}, 355:106762, 2019.

\bibitem[DJW21]{DycJasWal21}
Tobias Dyckerhoff, Gustavo Jasso, and Tashi Walde.
\newblock Generalised {{BGP}} reflection functors via the {{Grothendieck}} construction.
\newblock {\em International Mathematics Research Notices. IMRN}, 2021(20):15733--15745, 2021.

\bibitem[Fao17]{Fao17}
Giovanni Faonte.
\newblock Simplicial nerve of an {{A-infinity}} category.
\newblock {\em Theory and Applications of Categories}, 32(2):31--52, 2017.

\bibitem[Fau03]{Fau03}
Halvard Fausk.
\newblock Picard groups of derived categories.
\newblock {\em Journal of Pure and Applied Algebra}, 180(3):251--261, 2003.

\bibitem[G{\v S}16a]{GroSto16}
Moritz Groth and Jan {\v S}{\v t}ov{\'i}{\v c}ek.
\newblock Abstract representation theory of {{Dynkin}} quivers of type {{A}}.
\newblock {\em Advances in Mathematics}, 293:856--941, 2016.

\bibitem[G{\v S}16b]{GroSto16b}
Moritz Groth and Jan {\v S}{\v t}ov{\'i}{\v c}ek.
\newblock Tilting theory for trees via stable homotopy theory.
\newblock {\em Journal of Pure and Applied Algebra}, 220(6):2324--2363, 2016.

\bibitem[G{\v S}18a]{GroSto18}
Moritz Groth and Jan {\v S}{\v t}ov{\'i}{\v c}ek.
\newblock Abstract tilting theory for quivers and related categories.
\newblock {\em Annals of K-Theory}, 3(1):71--124, 2018.

\bibitem[G{\v S}18b]{GroSto18b}
Moritz Groth and Jan {\v S}{\v t}ov{\'i}{\v c}ek.
\newblock Tilting theory via stable homotopy theory.
\newblock {\em Journal f\"ur die reine und angewandte Mathematik (Crelles Journal)}, 2018(743):29--90, 2018.

\bibitem[Hap87]{Hap87}
Dieter Happel.
\newblock On the derived category of a finite-dimensional algebra.
\newblock {\em Commentarii Mathematici Helvetici}, 62(1):339--389, 1987.

\bibitem[Hap88]{Hap88}
Dieter Happel.
\newblock {\em Triangulated {{Categories}} in the {{Representation Theory}} of {{Finite Dimensional Algebras}}}.
\newblock Number 119 in London {{Mathematical Society Lecture Note Series}}. Cambridge University Press, 1988.

\bibitem[Har17]{Har17}
Yonatan Harpaz.
\newblock Introduction to stable {$\infty$}-categories.
\newblock \url{www.math.univ-paris13.fr/~harpaz/stable_infinity_categories.pdf}, 2017.

\bibitem[HMS94]{HopMahSad94}
Michael~J. Hopkins, Mark Mahowald, and Hal Sadofsky.
\newblock Constructions of elements in {{Picard}} groups.
\newblock In {\em Topology and {{Representation Theory}}}, number 158 in Contemporary {{Mathematics}}, pages 89--126. American Mathematical Society, 1994.

\bibitem[Hov01]{Hov01}
Mark Hovey.
\newblock Model {{Category Structures}} on {{Chain Complexes}} of {{Sheaves}}.
\newblock {\em Transactions of the American Mathematical Society}, 353(6):2441--2457, 2001.

\bibitem[Jas24]{Jas24}
Gustavo Jasso.
\newblock Derived equivalences of upper-triangular ring spectra via lax limits.
\newblock {\em Comptes Rendus. Math\'ematique}, 362:279--285, 2024.

\bibitem[Joy08a]{Joy08}
Andr{\'e} Joyal.
\newblock Notes on quasi-categories, 2008.
\newblock Preprint.

\bibitem[Joy08b]{Joy08a}
Andr{\'e} Joyal.
\newblock The theory of quasi-categories and its applications, 2008.
\newblock Preprint.

\bibitem[Kra08]{Kra08}
Henning Krause.
\newblock Representations of quivers via reflection functors.
\newblock arXiv:0804.1428, 2008.

\bibitem[Lad07]{Lad07}
Sefi Ladkani.
\newblock Universal derived equivalences of posets.
\newblock arXiv:0705.0946, 2007.

\bibitem[Lad08]{Lad08}
Sefi Ladkani.
\newblock {\em Homological {{Properties}} of {{Finite Partially Ordered Sets}}}.
\newblock PhD thesis, The Hebrew University of Jerusalem, 2008.

\bibitem[Lan21]{Lan21}
Markus Land.
\newblock {\em Introduction to {{Infinity-Categories}}}.
\newblock Compact {{Textbooks}} in {{Mathematics}}. Birkh\"auser Cham, 2021.

\bibitem[Lur09]{Lur09}
Jacob Lurie.
\newblock {\em Higher Topos Theory}.
\newblock Number 170 in Annals of Mathematics Studies. Princeton University Press, 2009.

\bibitem[Lur15]{Lur15}
Jacob Lurie.
\newblock Rotation invariance in algebraic {{K-theory}}.
\newblock \url{www.math.ias.edu/~lurie/papers/Waldhaus.pdf}, 2015.

\bibitem[Lur17]{Lur17}
Jacob Lurie.
\newblock Higher algebra.
\newblock \url{www.math.ias.edu/~lurie/papers/HA.pdf}, 2017.

\bibitem[Lur18]{Lur18}
Jacob Lurie.
\newblock Spectral algebraic geometry.
\newblock \url{www.math.ias.edu/~lurie/papers/SAG-rootfile.pdf}, 2018.

\bibitem[Lur25]{Lur25}
Jacob Lurie.
\newblock Kerodon.
\newblock \url{www.kerodon.net}, 2025.

\bibitem[MY01]{MiyYek01}
Jun-ichi Miyachi and Amnon Yekutieli.
\newblock Derived {{Picard}} groups of finite-dimensional hereditary algebras.
\newblock {\em Compositio Mathematica}, 129(3):341--368, 2001.

\bibitem[RV22]{RieVer22}
Emily Riehl and Dominic Verity.
\newblock {\em Elements of {$\infty$}-{{Category Theory}}}.
\newblock Number 194 in Cambridge {{Studies}} in {{Advanced Mathematics}}. Cambridge University Press, 2022.

\end{thebibliography}

\end{document}